\theoremstyle{plain}
\newtheorem{thm}{Theorem}[section] 
\newtheorem{lem}[thm]{Lemma}
\newtheorem{cor}[thm]{Corollary}
\newtheorem{prop}[thm]{Proposition}
\theoremstyle{definition}
\newtheorem{defn}[thm]{Definition}
\newtheorem{conj}[thm]{Conjecture}
\newtheorem{ques}[thm]{Question}
\newtheorem{rem}[thm]{Remark}
\numberwithin{equation}{section}
\numberwithin{thm}{section}
\newcommand{\olambda}{{\overset\circ\lambda}}
 \newcommand{\rat}{{\text{\rm rat}}}
\newcommand{\oPplus}{\overset{\circ \hspace*{5pt}}{P^+}}
\newcommand{\oPhiplus}{\overset{\circ \hspace*{5pt}}{\Phi^+}}
\newcommand{\fhstar}{\frak{h}^{*+}}
\newcommand{\ofhstar}{\overset{\circ \hspace*{6pt}}{\frak{h}^{*}}}\newcommand{\nat}{{\text{\rm nat}}}
\newcommand{\bM}{{\mathbb M}}
\newcommand{\fh}{{\mathfrak h}}
\newcommand{\wh}{{\widetilde{\mathfrak h}}}
\newcommand{\oh}{{\overset\circ{\mathfrak h}}}
\newcommand{\fg}{{\mathfrak g}}
\newcommand{\og}{{\overset\circ{\mathfrak g}}}
\newcommand{\g}{{\mathfrak g}}
 \newcommand{\widep}{\widetilde{\mathfrak p}}
\newcommand{\oP}{{\overset\circ P}}
\newcommand{\ohstar}{\overset\circ{\mathfrak h}^*}
\newcommand{\oPhi}{{\overset\circ\Phi}}
\newcommand{\oPi}{{\overset\circ\Pi}}
\newcommand{\bO}{{\mathbb O}}
\newcommand{\wO}{{\widetilde{\mathbb O}}}
\newcommand{\orho}{{\overset\circ\rho}}
\newcommand{\Mres}{{\mathbb M}^{\text{\rm res}}}
\newcommand{\nilrad}{t{\mathbb C}[t]\otimes \g}
\newcommand{\q}{{\bold q}}
\newcommand{\wleq}{{\,\overset\sim\leq\,}}
\newcommand{\wH}{{\widetilde H}}
\newcommand{\sZ}{{\mathcal{Z}}}
\newcommand{\sC}{{\mathcal {C}}}
\newcommand{\wg}{{\widetilde{\mathfrak g}}}
 \newcommand{\omu}{{\overset\circ\mu}}
\newcommand{\Ext}{{\text{\rm Ext}}}
\newcommand{\gr}{\operatorname{ {gr}}}
\newcommand{\ad}{\operatorname{ad }}
\newcommand{\sL}{{\mathcal L}}
\newcommand{\Hom}{\text{\rm Hom}}
\newcommand{\End}{\operatorname{End}}
\newcommand{\sO}{{\mathcal{O}}}
\newcommand{\rad}{\operatorname{rad}}
\newcommand{\wM}{{\widetilde{M}}}
\newcommand{\wN}{{\widetilde{N}}}
\newcommand{\wS}{{\widetilde{S}}}
\newcommand{\wT}{{\widetilde{T}}}
\newcommand{\wL}{{\widetilde{L}}}
\newcommand{\sE}{{\mathcal{E}}}
\newcommand{\go}{\overset{\circ}{\mathfrak g}}
\newcommand{\blist}{\begin{list}{\rom{(\roman{enumi})}}{\setlength
{\leftmarg in}{0em} \setlength{\itemindent}{7ex}
\setlength{\labelsep}{2ex}\setlength{\listparindent}{\parindent}
\usecounter{enumi}}}
\newcommand{\elist}{\end{list}}
\begin{document}

\begin{abstract} In \cite{PS6}, the authors studied the radical filtration of a Weyl module $\Delta_\zeta(\lambda)$
for quantum enveloping algebras $U_\zeta(\overset\circ{\mathfrak g})$ associated to a finite dimensional complex semisimple
Lie algebra $\overset\circ{\mathfrak g}$.  There $\zeta^2=\sqrt[e]{1}$ and $\lambda$ was, initially, required to be $e$-regular.  Some additional restrictions on $e$ were required---e.~g., $e>h$, the Coxeter
number, and $e$ odd.
  Translation to a facet gave an explicit semisimple series for all
quantum Weyl modules with singular, as well as regular, weights. That is, the sections of the filtration are  explicit semisimple modules with computable multiplicities of irreducible constituents. However, in the singular case, the filtration
conceivably might not be the radical filtration. This paper shows how a similar
semisimple series result can be obtained for all positive integers $e$ in case $\overset\circ{\mathfrak g}$
has type $A$, and for all positive integes $e\geq 3$ in type $D$.   One application describes 
semisimple series (with computable multiplicities) on $q$-Specht modules. 
We also discuss an analogue for Weyl modules for classical Schur algebras and Specht modules for
symmetric group algebras in positive characteristic
$p$. Here we assume the James Conjecture and a version of the Bipartite Conjecture. 
\end{abstract}

 \title[ A semisimple series for $q$-Weyl and $q$-Specht modules]{A semisimple series for $q$-Weyl and
 $q$-Specht modules}
\author{Brian J. Parshall}
\address{Department of Mathematics \\
University of Virginia\\
Charlottesville, VA 22903} \email{bjp8w@virginia.edu {\text{\rm
(Parshall)}}}
\author{Leonard L. Scott}
\address{Department of Mathematics \\
University of Virginia\\
Charlottesville, VA 22903} \email{lls2l@virginia.edu {\text{\rm
(Scott)}}}

\thanks{Research supported in part by the National Science
Foundation}

\subjclass{Primary 17B55, 20G; Secondary 17B50}

\medskip
\medskip
\medskip \maketitle

\section{Introduction} In the modular representation theory of a reductive group $G$ (or a quantum
enveloping algebra $U_\zeta(\overset\circ{\mathfrak g})$, with $\zeta^2$ a primitive
$e$th root of 1), the general failure of complete reducibility
has given rise, in the past 40 years, to a rich cohomology theory for both $G$ and $U_\zeta(\overset\circ{{\mathfrak g}})$. See \cite{JanB} for a compilation  of many results.  The related question of better understanding  important
filtrations of certain modules, e.~g., Weyl modules, also has attracted considerable attention. See, for
example, \cite[pp. 445, 455]{Jantzenpaper},   \cite[\S8]{AK}, \cite{Pillen}, \cite{PS6} on filtrations with semisimple
sections  as well as \cite[\S3]{J2}, \cite{Don},  \cite{A1}, \cite[\S6]{CPS9}, and
\cite{PS7} for the somewhat analogous $p$-filtrations.

Interesting filtrations can take many forms, but a basic filtration for any finite dimensional
module $M$ is its radical filtration
$M\supseteq \rad M\supseteq \rad^2M\supseteq \cdots$. In this case, the sections
$\rad^iM/\rad^{i+1}M$ are, of course, semisimple (i.~e., completely reducible), so that $\{\rad^iM\}$ is an example
of a ``semisimple series,"  mentioned in the title of this paper. 
 In recent work, the authors \cite{PS6} succeeded
in calculating the multiplicities of the irreducible constituents for the radical series sections in the quantum Weyl modules associated to regular weights. It was required that $e>h$, the Coxeter number of $\mathfrak g$. In addition, $e$ was required to be odd (and there were some other mild conditions on $e$, depending on the
root system). For
such ``large" $e$, we also could describe the sections in a semisimple series for quantum Weyl modules
with singular highest weights (but we were unable to
show the series was the radical series, though this seems likely to be the usual case). Our methods also were applicable for Weyl modules
in sufficiently large positive characteristics having highest weights in the Janzten region.  

This paper completes part of this project by giving, for types $A$ and $D$, an explicit semisimple series for quantum
Weyl modules for {\it all} positive integers $e$, except that in type $D_{2m+1}$ it is  required $e\geq 3$.  
Explicit formulas for the multiplicities of the irreducible modules for each semisimple
section are also obtained.   In particular, in type $A$, our previous results are extended to all small $e$ and all even $e$.  Interestingly, these previous results, given in \cite{PS6} for $e$ odd and $>h$, play a key role here in obtaining the results for
$e$ even and/or small.  Extensions of these results to other types would be possible provided there were improvements in
the Kazhdan-Lusztig correspondence as quoted in \cite[p. 273]{T}. This paper is organized so as to
make such extensions easy to obtain once such improvements are known. 

The method involves passing, for suitably  large $e$, to an equivalent category of modules for the
(untwisted) affine Lie algebra $\mathfrak g$ attached to $\overset\circ{\mathfrak g}$.  Category equivalences 
at the affine Lie algebra level provides the flexibility to treat small/even values of $e$ (and then
pass back to the quantum case, using the work \cite{KL} of Kazhdan-Lusztig and our own
results \cite{PS6}). Our approach
is non-trivial and takes up \S\S3--7. It requires the interaction of several highest weight categories of Lie
algebra modules  (some of them
new) and exact functors between them.  In particular, we treat (various versions of) categories of
$\widetilde{\mathfrak g}=[{\mathfrak g},{\mathfrak g}]$-modules which are integrable in the direction of $\overset\circ{\mathfrak g}$, and we also study their associated
standard and costandard modules. Section 7 contains several contributions to further understanding these categories; see,
for instance, Theorem \ref{theorem7.3} which both mirrors and uses the filtration results of \cite[Thms. 8.4, Cor. 8.5]{PS6}, and whose
proof requires the combinatorial equivalences obtained by Feibig 
\cite[Thm. 11]{Fiebig}. All of this work is done when
  $\overset\circ{\mathfrak g}$ is an arbitrary complex semisimple Lie algebra. Much of what we need for
  the quantum case (in particular, the entire $e$ odd case) could be done by working with the translation functor theory we provide, which
  gives many categorial equivalences without the need to construct inverses at a Verma flag level, as
   in \cite{Fiebig0}, or to construct explicit combinatorial deformations, as in \cite{Fiebig}. However, the latter theory of Fiebig is theoretically very satisfying and  has many additional practical advantages. In particular, 
   it allows us, in our quantum situation, to deal with the $e$ even case.

 One huge advantage of our extension of the results of \cite{PS6} to small $e$ is that the results can be
used to obtain, in type $A$ and working with the $q$-Schur algebras $S_q(n,r)$ with $q=\zeta^2$, semisimple series and multiplicity formulas for the Specht modules of the Hecke
algebras $H_q(r)$. Small $e$ results are required because the contravariant Schur functor from $S_q(n,r){\text{\rm -mod}}
\to{\text{\rm mod-}}H_q(r)$, taking Weyl modules to Specht modules, is only
exact when $r\leq n$.  On the other hand, the treatment of meaningful cases (i.~e., $H_q(r)$ not semisimple) requires
$e\leq r$, so that $e\leq n=h$. Thus, $e$ is ``small" in the sense of this paper. (Also, except when $e=r=h$, we have $e<h$, and all weights are singular.)

Another application is to Weyl modules for classical Schur algebras $S(n,r)$ in characteristic $p>0$. The
weights $\lambda$ are required to be viewed as partitions of a positive integer $r$ satisfying
$r<p^2$. Also, we assume (the defining characteristic version of) the James conjecture \cite{James} and a Schur algebra version of the Bipartite Conjecture \cite{FT}; see \S8.3. With these assumptions, we show that both Weyl modules and corresponding
Specht modules have explicit semisimple series, with multiplicities of irreducible modules explicitly given in
terms of inverse Kazhdan-Lusztig polynomials. 

Returning to the quantum case, there is an interesting overlap, in type $A$ with $e\not=3$, between our results and methods and those of Peng Shan \cite{Shan}. Her focus is on the Jantzen filtration and ours is on a semisimple
series. In the case of regular weights, the sections of the Jantzen filtration are semisimple; in fact, Theorem \ref{quantum}(a), together with the multiplicities given in \cite{Shan}, imply, in the regular weight case, that the Jantzen filtration is the radical filtration. Semisimplicity
of the sections of the Jantzen filtration 
remains unknown
for singular weights.  However, semisimplicity is likely, since the section multiplicities in \cite{Shan} agree with
those for the semisimple series studied in this paper.

  In \S9, Appendix I, we provide (apparently new) equivalences in the affine case between 
$\uparrow$-style orders \cite{JanB}, \cite{KK}, and the Bruhat-Chevalley order. The proofs in this section
are all combinatorial. The results are used in our proofs here, and Theorem \ref{theorem9.6} has also been used
in \cite{HPS} to complete an argument in \cite{ABG}, relevant to the Koszulity of some of the
algebras $A$ we consider in the regular case. See footnote 7. In Theorem \ref{theorem7.3}, for example, we
prove only that $\gr A_\Gamma$ is Koszul, not the stronger property that $A_\Gamma$ is Koszul. Although the
Koszulity of $\gr A_\Gamma$ is all that is needed in the semisimple series results in this paper, it is still 
interesting to know about the Koszulity of $A_\Gamma$, as argued in footnote 7.  For a (non-Lie theoretic) example when
$\gr A$ is Koszul, but $A$ is not Koszul, see [11].

\section{Notation: Lie algebras} The following notation is standard, mostly following \cite{Kac},
\cite{KT1}, and \cite{KT2} with cosmetic differences. (For example, our root system is denoted $\Phi$
rather than $\Delta$. The classical finite root system is denoted $\oPhi$, and the maximal
long and short roots of $\oPhi$ are denoted $\theta_l$ and $\theta_s$, respectively.)  
If $V$ is a complex vector space and $V^*$ is its dual, the natural pairing $V^*\times V\to\mathbb C$ between $V^*$ and $V$ is
usually denoted $\langle \phi,v\rangle= \phi(v)$. Unexplained notation is {\it very} standard.

\medskip
\noindent\underline{Finite notation:}
\begin{enumerate}

\item $\og$ is finite dimensional, complex, simple Lie algebra, with Cartan subalgebra $\oh$, Borel subalgebra
${\overset\circ{\mathfrak b}}\supset\oh$.

\item $\oPhi$, $\oPi=\{\alpha_1,\cdots,\alpha_r\}$, $\oPhiplus$: roots of $\oh$ in $\og$, simple
roots determined by $\mathfrak b$, positive roots determined by $\oPi$. These are subsets of
$\ofhstar$, which is identified with $\oh$ using the restriction of the Killing form on $\og$ to $\oh$
normalized so that the induced form on $\ofhstar$ satisfies $(\theta_l,\theta_l)=2$ if $\theta_l$
is the maximal root in $\oPhi$. Let $\orho=\frac{1}{2}\sum_{\alpha\in\oPhiplus}\alpha$ be the Weyl weight.

\item $\oPhi^\vee=\{\alpha^\vee\,|\,\alpha\in\oPhi\}$: coroot system of $\oPhi$, identifies with
 $\{\alpha^\vee=2\alpha/(\alpha,\alpha)\,|\,\alpha\in\oPhi\}$. 

\item $\overset\circ W=\langle s_{\alpha_2}, \cdots, s_{\alpha_r}\rangle$: Weyl group of $\og$, generated
by fundamental reflections $s_{\alpha_i}:{\mathbb E}\to{\mathbb E}$, where ${\mathbb E}=\ofhstar_{\mathbb R}$ is the Euclidean space associated to the Killing form on $\oh$.

\item $\varpi_1,\cdots,\varpi_r$: fundamental dominant weights; thus, $(\varpi_i,\alpha_j^\vee)=\delta_{i,j},$
$1\leq i,j\leq r$.

\item$\overset\circ P$, $\oPplus$: the weight lattice $\bigoplus_{i=1}^r{\mathbb Z}\varpi_i$, set
$\bigoplus_{i=1}^r{\mathbb N}\varpi_i$ of dominant weights.

\item $h$, $g$: Coxeter and dual Coxeter numbers; thus $h-1=(\orho,\theta^\vee_s)$ and $g-1=\langle\orho,
\theta_l^\vee\rangle$, $i=1,\cdots, r$. 
 \end{enumerate}

 \medskip
 \noindent
 \underline{Affine notation:}
 \begin{enumerate}

\item ${\mathfrak g}:=({\mathbb C}[t,t^{-1}]\otimes\go)\oplus {\mathbb C}c\oplus{\mathbb C}d$: affine Lie algebra
attached to $\og$, with central element $c$.

\item ${\mathfrak h}={\overset\circ{\mathfrak h}}\oplus {\mathbb C}c\oplus
{\mathbb C}d$ be the ``Cartan subalgebra" of $\mathfrak g$. Following \cite[p. 268]{T}, consider
$\chi,\delta\in{\mathfrak h}^*$ defined by 
$$\begin{cases}\chi({\mathfrak h})=\delta({\mathfrak h})=0;\\ \chi(c)=\delta(d)=1;\\
\chi(d)=\delta(c)=0.\end{cases}$$
 Thus, 
\begin{equation}\label{decomp}{\mathfrak h}^*:=\overset\circ{\mathfrak h}^* \oplus{\mathbb C} \chi\oplus{\mathbb C}\delta.\end{equation}
Here $\ofhstar$ identifies with a subspace of ${\mathfrak h}^*$ by making
it vanish on $d$ and $c$. 

\item  $\Phi^{\text{\rm im}:}=\{n\delta\,|\, 0\not=n\in{\mathbb Z}\}$, the imaginary roots.

\item  $\Phi^{\text{\rm re}}:=\{ j\delta+\alpha,\,|\, j\in{\mathbb Z}, \alpha\in\oPhi\}$, the real roots. 

\item $\Phi^\vee=\{\alpha^\vee\,|\,\alpha\in\Phi^{\text{\rm re}}$, affine roots.

\item $\Phi=\Phi^{\text{\rm re}}\cup\Phi^{\text{\rm im}}$, the root system of $\g$.

\item
$\Phi^+=\{j\delta\,|\, j\in {\mathbb Z}^+\}\cup \{j\delta+\alpha\,|\, j\in{\mathbb Z}^+, \alpha\in\oPhi\}\cup\oPhiplus$:
positive roots.   If $\alpha\in\Phi$, ${\mathfrak g}_\alpha\subset\mathfrak g$ is the $\alpha$-root space. The algebra $\g$ has Borel subalgebra ${\mathfrak b}:=
\langle \fh, \g_\alpha, \alpha\in\Phi^+\rangle$.  Let $\alpha_0=\delta-\theta_l$, so that
$\Pi:=\{\alpha_0,\alpha_1,\cdots,\alpha_r\}$ is the set of simple roots for $\fg$. Then $\alpha_0^\vee=c-\theta_l^\vee$.  Put $\rho=\overset\circ\rho + g\chi$.

\item $Q={\mathbb Z}\alpha_0\oplus\cdots\oplus{\mathbb Z}\alpha_r$, $Q^+={\mathbb N}\alpha_0\oplus\cdots
\oplus{\mathbb N}\alpha_r$: root lattice, positive root lattice.

\item $W:=\langle s_\alpha\,|\,\alpha\in\Phi^{\text{re}}\rangle=\langle s_\alpha\,|\, \alpha\in\Pi\rangle$:
Weyl group of $\g$.

\item For $\lambda\in\fh^*$, $\Phi(\lambda):=\{\alpha\in\Phi^{\text{\rm re}}\,|\, \langle\lambda+\rho,\alpha^\vee\rangle\in{\mathbb Z}\}$.

\item $\sC:=\{\lambda\in\fh^*\,|\, (\lambda+\rho)(c)=\langle \lambda+\rho,c\rangle\not=0\}$, the non-critical region.

\item $\sC^-:=\{\lambda\in\sC\,|\, \langle\lambda+\rho,\alpha^\vee\rangle\leq 0$, for all $\alpha\in\Phi^+(\lambda)=
\Phi(\lambda)\cap\Phi^+\}$, the ``non-critical" anti-dominant chamber.
\end{enumerate}

\section{Module categories}
Following \cite{KT2}, let  ${\mathbb O}={\mathbb O}({\mathfrak g})$
be the category of $\mathfrak g$-modules $M$ which are weight modules for $\fh$ having finite dimensional
weight spaces $M_\lambda$, $\lambda\in{\fh}^*$, and which have the
property that, given $\xi\in{\mathfrak h}^*$, the weight space $M_{\xi+\sigma}\not=0$ for only finitely
many $\sigma\in Q^+$.\footnote{This category, used in \cite{KT2}, is quite close to the original category,
denoted $\sO$
in \cite{Kac}. Indeed, the latter category is contained in $\bO$, and any object $M$ in $\bO$ in which 
$[M:L(\lambda)]\not=0$ implies $\lambda\in\sC$ is a direct sum of objects in the category $\sO$,
as follows from \cite[Prop. 3.3]{KT2}. Following \cite{KT2}, the symbol ``$\sO$" means something different in this paper.}

 Any $\lambda\in {\mathfrak h}^*$ defines a one-dimensional module (still denoted $\lambda$) for the universal enveloping algebra
 $U({\mathfrak b})$ of $\mathfrak b$. Let $M(\lambda):=U({\mathfrak g})\otimes_{U({\mathfrak b})}\lambda$ be the Verma module for $\mathfrak g$ of highest weight $\lambda$. It has
a unique irreducible quotient module $L(\lambda)$. Both $M(\lambda)$ and $L(\lambda)$ belong to
$\bO$.

The category $\mathbb O$ has a contravariant, exact duality $M\mapsto M^\star$, obtained
by taking $M^\star_\lambda$ to be the dual $M^\star_\lambda$ of the $\lambda$-weight space of
$M$,  and making
$\g$ act on $M^\star$ through the Chevalley anti-involution (sending $e_i,f_i,h$ to $e_i,f_i,-h$, respectively). Then $M^{\star\star}\cong M$, 
$M\in\bO$. Also, $M$ and $M^\star$ have the same weight space dimensions (i.~e., they have the
same formal character). Clearly, $L(\lambda)^\star\cong L(\lambda)$. 
In particular, the dual Verma module $M^\star(\lambda):=M(\lambda)^\star$ has socle $L(\lambda)$. 
(See \cite[p. 158]{MP} for similar notation of duality on a slightly different category. See also
\cite[pp. 26--28]{KT2}.)

 For $k\in{\mathbb Q}$, 
let $\bO_k$ be the full subcategory of $\bO$ consisting of all modules
for which the central element $c$ acts as multiplication by the scalar $k$.  For example, $M(\lambda), L(\lambda)\in\bO_k$ for 
$k:=\lambda(c)\in\mathbb Q$. In addition, the duality $M\mapsto M^\star$ restricts to a duality on $\bO_k$.

Put
$$\wg:=[{\mathfrak g},{\mathfrak g}]= ({\mathbb C}[t,t^{-1}]\otimes \go)\oplus {\mathbb C}c,$$
the derived subalgebra of $\mathfrak g$. Let $\wh:=\fh \oplus {\mathbb C}c$, so that
$\wh^*= \ofhstar\oplus {\mathbb C}\chi$. 

Next, define $\Mres(\fg)$ to be the full subcategory of $U(\fg)$-modules $M$ with the property that, given 
any $v\in M$,
$\fg_\alpha v=0$ for all but a finite number of positive roots $\alpha$. The
objects in $\Mres(\fg)$ are {\it not} required to be weight modules; that is, $\Mres(\fg)$ is not a subcategory of $\bO$. Replacing $\fg$ by $\wg$ gives
 a similar category $\Mres(\wg)$
of $\wg$-modules. For $k\in \mathbb Q$, let $\Mres_k(\fg)$ be the full subcategory of $\Mres(\fg)$
consisting of modules upon which $c$ acts by multiplication by $k$.  
A subcategory $\Mres_k(\wg)$ of $\Mres(\wg)$ is
defined
similarly.

The Casimir operator $\Omega$, defined in \cite[\S2.5]{Kac}, belongs
 to a completion $U_c(\g)$ of $U(\g)$  \cite[p. 229]{Kac}.\footnote{\cite{Kac}
only defines $U_c(\wg)$, but the same definition works for $\g$.} In particular,
$\Omega$ defines a locally finite operator on each object in $\Mres(\g)$, commuting with the action
of $\g$. For $a\in\mathbb C$, let $\Mres_{k,a}(\g)$ denote the full subcategory of $\Mres_k(\g)$ consisting of objects $M$ in $\Mres_k(\g)$ upon which $\Omega-a$ acts locally nilpotently, and let
${\mathbb M}^{\text{\rm res},d}_{k,a}(\g)$ be the full subcategory of $\Mres_{k,a}(\g)$ having objects on
which  $d$ has a semisimple action (in addition to the local nilpotence of $\Omega-a$).

\begin{prop} (Kac-Polo)\label{Polo} For $k\not=-g$ and $a\in{\mathbb C}$, there is a full embedding 
$$F_{k,a}:\Mres_k(\wg)\to\Mres_k(\fg)$$ of abelian categories, inducing an equivalence of
$\Mres_k(\wg)\overset\sim\longrightarrow{\mathbb M}^{\text{\rm res},d}_{k,a}(\g)$. Moreover, the inverse of 
the equivalence is given
by restriction.

That is, for each $M\in \Mres_k(\wg)$, $F_{k,a}(M)\in{\mathbb M}^{\text{\rm res},d}_{k,a}(\g)$
and $F_{k,a}(M)|_\wg\cong M$ (naturally). Also, any object in ${\mathbb M}^{\text{\rm res},d}_{k,a}(\g)$
is isomorphic to $F_{k,a}(M)$, for some $M\in\Mres_k(\wg)$.
\end{prop}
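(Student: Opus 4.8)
The plan is to construct $F_{k,a}$ by hand, keeping the underlying vector space and the $\wg$-action of $M$ unchanged and adjoining only an action of $d$; the point is that this action of $d$ is \emph{forced}, once we insist on level $k$, on local nilpotence of $\Omega-a$, and on $d$ semisimple. The input I would quote is Kac's description of the Casimir operator \cite[\S2.5, p.\,229]{Kac} (together with the footnotes above): on a $\g$-module of level $k\neq-g$ one has $\Omega=\Omega_0+2(k+g)\,d$ up to an additive constant, which I absorb into $a$; here $\Omega_0$ lies in Kac's completion $U_c(\wg)$, acts locally finitely on every object of $\Mres_k(\wg)$, and satisfies $[\Omega_0,x]=-2(k+g)\,\ad(d)(x)$ for all $x\in\wg$ (an identity that already makes sense inside $U_c(\wg)$, since the right-hand side lies in $\wg$). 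The hypothesis $k\neq-g$ enters exactly here, making $2(k+g)$ invertible so that $d$ can be solved for in terms of $\Omega$ and $\Omega_0$.

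Given $M\in\Mres_k(\wg)$, local finiteness of $\Omega_0$ gives, over ${\mathbb C}$, a generalized eigenspace decomposition $M=\bigoplus_{b\in{\mathbb C}}M^{[b]}$ with $M^{[b]}=\{v\in M:(\Omega_0-b)^Nv=0,\ N\gg0\}$. I would then define $F_{k,a}(M)$ to be $M$ with its $\wg$-action and with $d$ acting on $M^{[b]}$ by the scalar $(a-b)/2(k+g)$. To see this gives a $\g$-action, the commutator identity shows that a $d$-homogeneous $x\in\wg$ of degree $j$ (so $\ad(d)(x)=jx$) satisfies $x\,M^{[b]}\subseteq M^{[b-2(k+g)j]}$, and a short computation then gives $[d,x]\cdot v=j\,(x\cdot v)=\ad(d)(x)\cdot v$ on each $M^{[b]}$; the remaining relations among $d$ and $\wg$ (e.g.\ $[d,c]=0$) are checked the same way. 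On $F_{k,a}(M)$ the element $c$ acts by $k$, $d$ acts semisimply by construction, and on $M^{[b]}$ one has $\Omega=2(k+g)\cdot\frac{a-b}{2(k+g)}+\Omega_0=a+(\Omega_0-b)$, so $\Omega-a$ is locally nilpotent; the ``res'' condition is a condition on vectors and is untouched. Hence $F_{k,a}(M)\in{\mathbb M}^{\text{\rm res},d}_{k,a}(\g)$, and $F_{k,a}(M)|_{\wg}=M$ literally. Since $F_{k,a}$ alters neither underlying vector spaces nor maps, it is additive and exact.

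For full faithfulness: any $\wg$-linear $\phi\colon M\to M'$ is automatically $U_c(\wg)$-linear (a partial sum of $\Omega_0$ large enough to compute $\Omega_0$ on a given $v$ and on $\phi(v)$ shows $\phi(\Omega_0v)=\Omega_0\phi(v)$), hence preserves generalized $\Omega_0$-eigenspaces and so commutes with the $d$-action defined above; thus $\phi$ is $\g$-linear, and conversely restriction of a $\g$-map is $\wg$-linear, these being mutually inverse bijections giving $\Hom_{\g}(F_{k,a}M,F_{k,a}M')=\Hom_{\wg}(M,M')$. Finally, if $N\in{\mathbb M}^{\text{\rm res},d}_{k,a}(\g)$ then $N|_{\wg}\in\Mres_k(\wg)$, and because $\Omega=\Omega_0+2(k+g)d$ acts on $N$ with $\Omega-a$ locally nilpotent while $d$ is semisimple, $d$ is forced to act by the scalar $(a-b)/2(k+g)$ on the generalized $\Omega_0$-eigenspace $N^{[b]}$ — precisely the recipe defining $F_{k,a}$ — so $F_{k,a}(N|_{\wg})=N$. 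Thus $F_{k,a}$ is a full embedding landing exactly in ${\mathbb M}^{\text{\rm res},d}_{k,a}(\g)$, with quasi-inverse (indeed honest inverse) given by restriction.

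The part I expect to be genuinely delicate is not this formal scaffolding but the Casimir bookkeeping quoted at the start: fixing the exact normalization in $\Omega=\Omega_0+2(k+g)d+(\text{scalar})$ and in $[\Omega_0,x]=-2(k+g)\,\ad(d)(x)$, and verifying that $\Omega_0\in U_c(\wg)$ really does act locally finitely on \emph{all} of $\Mres_k(\wg)$ — the one place where both the ``res'' condition and the hypothesis $k\neq-g$ are truly used. Everything after that is manipulation of generalized eigenspaces.
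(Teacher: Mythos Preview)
Your proof is correct and follows essentially the same route as the paper's: what you call $\Omega_0$ is the Sugawara operator $T_0$, and your prescription ``$d$ acts by $(a-b)/2(k+g)$ on the generalized $b$-eigenspace of $\Omega_0$'' is exactly the paper's ``$d$ acts as the semisimple part $\tau_s$ of $\tau=\frac{T_0-a}{-2(k+g)}$.'' The only cosmetic difference is that the paper first lets $d$ act as $\tau$ itself, verifies the commutator relation there, and then passes to $\tau_s$ via an induction on the $M_{n,\epsilon}$ spaces, whereas you go straight to the semisimple action; both arguments are the same in substance.
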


\begin{proof} A brief outline of the proof may be found in Soergel \cite[pp. 446-447]{Soe}. We fill
in some details.  First, the algebra
 $U_c(\wg)$ injects  naturally into $U_c(\fg)$, since tensor induction takes
$\Mres(\wg)$ into $\Mres(\fg)$.  If $T_0\in U_c(\wg)$ denotes the ($0$th) Sugawara operator,  the discussion in \cite[p. 228--229]{Kac} shows that 
the equation 
$$T_0=-2(c+g)d +\Omega$$
holds in $U_c(\fg)$.  
 By \cite[Lemma 12.8]{Kac},  the equation
$[T_0,x]=[-2(c+g)d,x]$, for $x\in\wg$, holds in $U_c(\wg)$. If we consider the corresponding
equation of operators on an object $M\in \Mres_k(\wg)$, we may replace $c+g$ by $k+g\not=0$.
 Letting $d$ act as the operator $\tau:=\frac{T_0-a}{-2(k+g)}$
gives an action of $\fg$ on $M$. Equivalently,
$\tau x -x\tau$ acts as $[d,x]$ on $M$ for each $x\in \wg\subseteq U_c(\wg)$. The operator $T_0$ is locally finite on $M$ \cite[p.229]{Kac}, as is $\tau$. For any complex number $\epsilon$, and any positive integer $n$,
let 
$$M_{n,\epsilon}:=\{m\in M\,|\, (\tau-\epsilon)^nm=0\}.$$
 If $x\in {\mathfrak g}$ is a $\gamma$-eigenvector for ad$\,d$, 
we easily find, by induction on $n$, that $xM_{\epsilon,n}\subseteq M_{\gamma+\epsilon,n}$. (Alternatively,
see \cite[Prop. 2.7]{Carter}.) Let
$\tau_s$ be the semisimple part of the locally finite operator $\tau$ on $M$. The operator $\tau_s$
acts as multiplication by $\epsilon$ on $M_{\epsilon,n}$, and by $\gamma+\epsilon$ on $M_{\gamma+\epsilon,n}$. For $m\in M_{\epsilon,n}$ and
$x$ as above, we have
$$\tau_s(xm)=(\gamma+\epsilon)xm=\gamma xm +\epsilon xm =[d,x]m + x\tau_sm$$
and so $\tau_sx-x\tau_s$ acts as $[d,x]$ on  $M$. This, letting $d$ act as $\tau_s$, gives an 
action
of $\wg$ on $M$, extending that of $\wg$. (Note that $\wg$ is spanned by the eigenvectors of $\ad\,d$.)

The constructed $\g$-module belongs to $\Mres_k(\fg)$ and $d$ acts semisimply (as $\tau_s$). The equation
$$\tau= d +\frac{\Omega -a}{-2(k+g)}$$
shows that $\Omega -a$ acts as a nonzero scalar multiple of the locally nilpotent part of $\tau$. So it is
itself locally nilpotent. Finally, the assignment of $M$ to the constructed $\g$-module is clearly
functorial providing a functor $F_{k,a}:\Mres_k(\wg)\to{\mathbb M}^{\text{\rm res}}_{k,a}(\g)$
with $F_{k,a}(M)|_{\wg}=M$.  The construction shows that $F_{k,a}(M)\in{\mathbb M}^{{\text{\rm res}},d}_{k,a}$,
and, clearly, any object $N\in{\mathbb M}^{{\text{\rm res}},d}_{k,a}$ satisfies $N\cong F_{k,a}(N|_\wg)$.
(Note that $d$ must act as the semisimple part of $\frac{T_0-a}{-2(k+g)}$ and $\frac{\Omega-a}{-2(k+g)}$ must
act as the locally nilpotent part.) This completes the proof.
\end{proof}

We  will assume for the rest of this paper, unless otherwise
explicitly stated to the contrary, that $k$ is a rational number with $k+g<0$.
We next define below, for such a $k\in\mathbb Q$, a category $\sO_k$ of $\wg$-modules.  
The definition is taken from \cite{T}, adapted from \cite{KL}. In Corollary \ref{corbelow}, $\sO_k$ is shown to be
equivalent to a category of $\fg$-modules, and is more fully integrated into the $\fg$-module theory in 
\S5. (See Remark \ref{natural order remark}(b).)

Given any $\wg$-module $M$ and
positive integer $n$, let $M(n)$ be the subspace of all $m\in M$ such that $x_1\cdots x_nm=0$
for any choice of $x_1,\cdots, x_n\in\nilrad$. Now define $\sO_k$ to be the full subcategory
of $\wg$-modules $M$ such that (a) $c$ acts as multiplication by $k$; (b) each $M(n)$ is finite
dimensional; and (c) $M=\bigcup_{n\geq 1}M(n)$. Since each $M(n)$ is
evidently a $\og$-submodule of $M|_\og$, (c) implies that $M$ is a locally finite, hence semisimple, $\og$-module.  Then, by (a),  $M$ is a weight module for $\wg$, in the sense that it
decomposes into weight spaces for $\wh$. In addition, $\sO_k$ is a full subcategory of $\Mres_k(\wg)$. 

In \cite[Defn. 2.15, Thm. 3.2]{KL}\footnote{In this reference, the authors define a category $\sO_\kappa$ which
turns out to be $\sO_k$ for $k=\kappa-g$.  The discussion is given only for the simply laced root system
case, but this restriction is not necessary \cite{L}.}, it is shown that all objects in $\sO_k$ have finite length. The
irreducible modules involved are all generated by a highest weight vector having weight $\lambda$
satisfying $\langle\lambda,\alpha_i^\vee\rangle\geq 0$, for $i=1,\cdots, r$ and $\lambda(c)=k$.   These irreducible modules are non-isomorphic for
distinct $\lambda$ above. Conversely, any $\wg$-module with a finite composition series having irreducible quotients of this form belong to
$\sO_k$. 

At the level of $\fg$-modules define $\bO^+=\bO^+(\fg)$ to be the full subcategory of $\bO$ consisting
of all objects $M$ such that $[M:L(\mu)]\not=0$ implies $\langle\mu,\alpha_i^\vee\rangle\in{\mathbb N}$ for
$i=1,\cdots, r$. Let $\bO^{+,\text{finite}}$ be the full category of $\bO^+$ consisting of objects which
have finite length. 

Similarly, for any $k\in\mathbb Q$ with $k<-g$, let $\bO_k=\bO_k(\fg)$ be the full subcategory of $\bO$ consisting of 
all objects upon $c$ acts by multiplication by $k$, and let $\bO^+_k$ be the full subcategory of $\bO$  consisting
of all objects in both $\bO_k$ and $\bO^+$.  If $a\in\mathbb C$, let $\bO^+_{k,a}$ be the full subcategory of $\bO^+_k$ consisting of
all objects upon which $\Omega$ acts with generalized eigenvalue $a$. 
Finally, let $\bO_k^{+,\text{finite}}$ and $\bO_{k,a}^{+,\text{finite}}$ be the full subcategories of $\bO_k^+$
and $\bO^+_{k,a}$, respectively, consisting of objects of finite length.\footnote{Any indecomposable object
of $\bO_k^+$, or of $\bO_k$, or any object in a single block of $\bO_k$, already has finite length. The argument is given below. }

\begin{cor}\label{corbelow} Suppose $k\in\mathbb Q$, $k<-g$. Then the restriction to $\wg$ of any object in $\bO^{+,\text{\rm finite}}_{k,a}$ 
belongs to 
$ \sO_k$. Conversely, if $M\in\sO_k$, then $F_{k,a}M$ belongs to $\bO^{+,\text{\rm finite}}_{k,a}$. These two
functors are mutually inverse, up to a natural isomorphism, and provide an equivalence
$$\bO^{+,\text{\rm finite}}_{k,a}\overset\sim\to \sO_k,$$
\end{cor}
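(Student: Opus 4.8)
The plan is to deduce the corollary from Proposition \ref{Polo} (the Kac--Polo equivalence) by checking that the two categories $\bO^{+,\text{\rm finite}}_{k,a}$ and $\sO_k$ are matched, under the restriction functor and the functor $F_{k,a}$, as subcategories of $\Mres_k(\wg)$ and $\Mres_k(\fg)$ respectively. The key point is that $\sO_k$ is, essentially by its own definition, a full subcategory of $\Mres_k(\wg)$, and $\bO^{+,\text{\rm finite}}_{k,a}$ is a full subcategory of $\Mres_k(\fg)$ (indeed of ${\mathbb M}^{\text{\rm res},d}_{k,a}(\g)$, since $d$ acts semisimply on a weight module and $\Omega$ has generalized eigenvalue $a$ on the objects of $\bO^{+,\text{\rm finite}}_{k,a}$). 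So the real content is the containment in \emph{both} directions, after which the ``mutually inverse'' assertion is inherited verbatim from Proposition \ref{Polo}.

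First I would handle the easy direction. Let $N\in\bO^{+,\text{\rm finite}}_{k,a}$; I claim $N|_\wg\in\sO_k$. Property (a) of $\sO_k$ is immediate ($c$ acts by $k$). For (b) and (c): since $N$ has finite length in $\bO^+$, each composition factor is some $L(\mu)$ with $\langle\mu,\alpha_i^\vee\rangle\in\N$ for $i=1,\dots,r$, and such $L(\mu)$, restricted to $\og$, is a locally finite (hence semisimple) $\og$-module whose $\og$-composition factors are finite-dimensional; one checks this from the fact that $L(\mu)$ lies in $\bO$ and has a highest weight dominant for $\og$ (the $\og$-integrability follows from the $\alpha_1,\dots,\alpha_r$-string arguments standard in Kac--Moody theory, cf.\ the $\mathfrak{sl}_2$-theory applied to each $\mathfrak{g}_{\alpha_i}$). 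Then $N|_\wg$ is a locally finite $\og$-module, so $N=\bigcup_n N(n)$ and each $N(n)$ is a finite direct sum of finite-dimensional $\og$-weight spaces of $N$ with $\N$-bounded behaviour in the $Q^+$-direction, hence finite-dimensional. This gives (b) and (c), so $N|_\wg\in\sO_k$; moreover $N\cong F_{k,a}(N|_\wg)$ by Proposition \ref{Polo} since $N\in{\mathbb M}^{\text{\rm res},d}_{k,a}(\g)$.

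For the other direction, let $M\in\sO_k$; I must show $F_{k,a}M\in\bO^{+,\text{\rm finite}}_{k,a}$. By Proposition \ref{Polo}, $F_{k,a}M\in{\mathbb M}^{\text{\rm res},d}_{k,a}(\g)$, so $c$ acts by $k$, $\Omega$ acts with generalized eigenvalue $a$, and $d$ acts semisimply. I would then argue: $M$ is a weight module for $\wh$ with $\og$ acting semisimply and locally finitely, $d$ adds the missing grading, so $F_{k,a}M$ is a weight module for $\fh$; the finite-dimensionality of weight spaces and the $Q^+$-finiteness condition defining $\bO$ follow from finiteness of the $M(n)$ together with the fact (from Kazhdan--Lusztig \cite{KL}, quoted above) that $M$ has finite length with highest-weight-type composition factors $L(\lambda)$, $\langle\lambda,\alpha_i^\vee\rangle\ge 0$, $\lambda(c)=k$. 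Each such $L(\lambda)$, viewed as a $\fg$-module via $F_{k,a}$, is an irreducible highest weight $\fg$-module $L(\tilde\lambda)$ in $\bO$ for the appropriate lift $\tilde\lambda$ with $\tilde\lambda(c)=k$; hence $F_{k,a}M$ has finite length with all composition factors $L(\mu)$ satisfying $\langle\mu,\alpha_i^\vee\rangle\in\N$, which is exactly membership in $\bO^{+,\text{\rm finite}}_{k,a}$. Finally, the mutual-inverse statement and the equivalence are immediate from the corresponding clauses of Proposition \ref{Polo}, restricted to these matched subcategories.

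The main obstacle I anticipate is the bookkeeping in the second direction: verifying that $F_{k,a}M$ genuinely lands in $\bO$ (not merely in $\Mres_k(\fg)$), i.e.\ that the $Q^+$-finiteness axiom of $\bO$ holds, and that the composition factors of $F_{k,a}M$ are precisely the $L(\mu)$ with the required dominance, with the identification $\sO_k\ni L(\lambda)\leftrightarrow L(\mu)\in\bO^+$ correctly tracked. This is where the precise shape of the Kazhdan--Lusztig description of irreducibles in $\sO_k$ (highest weight, $\og$-dominant, correct central charge) does the essential work, and where one must be careful that the $d$-grading supplied by $F_{k,a}$ is the ``right'' one so that the resulting $\fh^*$-weights lie in the expected set. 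Everything else is a routine transport of structure across Proposition \ref{Polo}.
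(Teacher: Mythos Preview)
Your proposal is correct and follows exactly the approach the paper intends: the corollary is stated without proof, as an immediate consequence of Proposition~\ref{Polo} together with the Kazhdan--Lusztig description of the irreducibles in $\sO_k$ (quoted just before the corollary) and the characterization of $\sO_k$ in terms of finite composition series with such irreducibles. Your verification of the two containments and your identification of the only nontrivial bookkeeping (that $F_{k,a}M$ lands in $\bO$, handled via finite length and highest-weight irreducibles) is precisely what the paper is leaving to the reader.
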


Given any weight $\mu\in\fh^*=\ohstar\oplus{\mathbb C}\chi\oplus{\mathbb C}\delta$, define
$\omu$ to be the projection of $\mu$ into $\ohstar$, and, for $k\in\mathbb Q$, put
$\mu^k=\omu+k\chi\in (\wh)^*$.   Also, $k\not=-g$ (the dual Coxeter number) put 
$$\mu^{k,a}:=\mu^k +b\delta,$$
where $b=\frac{a-(\omu+2\overset\circ\rho,\omu)}{2(k+g)}$ depends on $k$ and $\omu$, as well as $a$.
Note that $\mu|_{\oh}=\omu$ and $\mu|_\wh=\mu^k$ if and only if $\mu$ has level $k$. Also, $\mu=\mu^{k,a}$ if
and only if
$\mu$ has level $k$ and the Casimir operator $\Omega$ acts with eigenvalue $a$ on $L(\mu)$, the
irreducible $\fg$-module of high weight $\mu$. (This is an easy calculation from \cite[Prop. 11.36]{Carter}. See also \cite[p. 229]{Kac}.) Since
we regard $(\wh)^*$ and $\ohstar$ as contained in $\fh^*$, $\mu^k$ and $\mu^{k,a}$ are defined for
$\mu$ in these spaces as well.

As a corollary of this discussion, we have the following.
\begin{prop}\label{restrictionweights} Suppose $\mu,\mu'\in\sC$ and $\mu'=w\cdot\mu$ for some
element $w\in W$. (Here $w\cdot\mu:=w(\mu+\rho)-\rho$ is the usual dot action of $W$.) Then,
if $\mu\not=\mu'$, we have $\mu|_{\wh}\not= \mu'|_{\wh}$. 

\end{prop}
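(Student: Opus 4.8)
The plan is to prove the contrapositive: if $\mu|_{\wh}=\mu'|_{\wh}$, then $\mu=\mu'$. The first step is to read off from \eqref{decomp} what the restriction map $\fh^*\to\wh^*$ does. Since $\delta$ vanishes on $\oh$ and on $c$, it restricts to $0$ on $\wh=\oh\oplus\mathbb{C}c$, while $\ohstar\oplus\mathbb{C}\chi$ maps isomorphically onto $\wh^*$. Hence $\mu|_{\wh}=\mu'|_{\wh}$ is equivalent to $\mu'-\mu\in\mathbb{C}\delta$; writing $\mu'=\mu+n\delta$ with $n\in\mathbb{C}$, the goal becomes to show $n=0$.

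The second step uses the hypothesis $\mu'=w\cdot\mu$, which by definition of the dot action reads $w(\mu+\rho)=\mu'+\rho=(\mu+\rho)+n\delta$. Applying the $W$-invariant symmetric bilinear form $(\,,\,)$ on $\fh^*$ (the extension, following \cite{Kac}, of the form normalized in \S2) to both sides and using its $W$-invariance gives $(\mu+\rho,\mu+\rho)=(\mu+\rho+n\delta,\,\mu+\rho+n\delta)$; expanding the right-hand side, $2n(\mu+\rho,\delta)+n^{2}(\delta,\delta)=0$. Now $\delta$ is an imaginary root, hence isotropic, so $(\delta,\delta)=0$; and in this normalization $(\lambda,\delta)$ is a fixed nonzero multiple of $\lambda(c)$ for every $\lambda\in\fh^*$ (the subspace $\mathbb{C}\chi\oplus\mathbb{C}\delta$ is a hyperbolic plane orthogonal to $\ohstar$, and the $\chi$-coefficient of $\lambda$ is $\lambda(c)$). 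Thus $n\,(\mu+\rho)(c)=0$, and since $\mu\in\sC$ means precisely $(\mu+\rho)(c)\ne 0$, we conclude $n=0$, i.e. $\mu=\mu'$.

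In the language of the paragraph preceding the statement, this says that the only invariants of $\mu$ modulo $\mathbb{C}\delta$ are its level $k=\mu(c)$ and its Casimir eigenvalue $a$, both of which are preserved by the dot action of $W$: $w$ fixes $c\in\fh$ (every simple root vanishes on $c$, so each $s_{\alpha_i}$ fixes $c$), whence $\mu'(c)=\mu(c)$; and $\Omega$ acts by one and the same scalar on $M(\mu)$ and on $M(w\cdot\mu)$ (cf. \cite[Prop. 11.36]{Carter}). Given $\mu|_{\wh}=\mu'|_{\wh}$, i.e. $\omu=\omu'$ and $k=k'$, the formula $\mu=\mu^{k,a}=\omu+k\chi+b\delta$ with $b$ as defined above then forces $b=b'$, hence $\mu=\mu'$, using $k+g\ne 0$. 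I expect the only point requiring care to be the normalization bookkeeping — that $(\delta,\delta)=0$, that $(\lambda,\delta)$ is proportional to $\lambda(c)$, and that the level is $W$-invariant — but each is a routine check against the conventions of \S2; the rest is the one-line computation above.
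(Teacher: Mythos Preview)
Your proof is correct. Your second paragraph is a slightly more direct version of the paper's argument: the paper packages the $W$-invariance of $(\mu+\rho,\mu+\rho)$ as invariance of the Casimir eigenvalue $a$ and then appeals to the identity $\mu=\mu^{k,a}$ set up just before the proposition, whereas you simply expand $(\mu+\rho,\mu+\rho)=(\mu+\rho+n\delta,\mu+\rho+n\delta)$ and use $(\delta,\delta)=0$ together with $(\mu+\rho,\delta)\propto(\mu+\rho)(c)\neq 0$. Your third paragraph then essentially reproduces the paper's own proof verbatim. Both routes rest on the same two facts---$W$-invariance of the form and the non-criticality hypothesis $\mu\in\sC$---so there is no substantive difference in strategy, only in bookkeeping.
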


\begin{proof}The
levels of $\mu,\mu'$ have the value $k=(\mu+\rho,\delta)=(w(\mu+\rho),\delta)= (\mu'+\rho,\delta)$, after
noting that $(\rho,\delta)=0$. By \cite[Prop. 11.36]{Carter},
$\Omega$ acts on $L(\mu)$ and $L(\mu')$ by multiplication by
$$\begin{aligned} a &=(\mu+\rho,\mu+\rho)-(\rho,\rho)\\
&=(w(\mu+\rho),w(\mu+\rho))-(\rho,\rho)\\ &=(\mu'+\rho,\mu'+\rho)-(\rho,\rho).\end{aligned}$$
If $\mu|_{\wh}=\mu'|_{\wh}$, then $\overset\circ\mu=\overset\circ\mu'$ and (by above) 
$$\mu=\mu^{k,a}=\overset\circ\mu^{k,a}=\overset\circ\mu^{\prime,k,a}=\mu',$$
as required.\end{proof}

 \section{Weyl groups and linkage classes} Maintain the above notation. For $\alpha\in\Phi^{\text{\rm re}}$,  form the reflection $s_\alpha:\fh^*\to \fh^*$,
 $x\mapsto x-\langle x,\alpha^\vee\rangle\alpha$. The Weyl group   
 $$W=\langle s_\alpha\,|\,\alpha\in\Phi^{\text{\rm re}}\rangle$$
 for $\g$ is a Coxeter group with fundamental reflections
  $S=\{s_{\alpha_i}\,|\,i=0,\cdots, r\}$.   For $\lambda\in \fh^*$, 
  $$\Phi(\lambda):=\{\alpha\in\Phi^{\text{\rm re}}\,|\,
 \langle\lambda+\rho,\alpha^\vee_i\rangle\in{\mathbb Z}\}$$
 is a subroot system of $\Phi$, in the sense of \cite{KT2}.  The subgroup 
 $$W(\lambda)=\langle s_\alpha\,|\,\alpha\in\Phi(\lambda)\rangle$$ 
 is a Coxeter group with fundamental system consisting of
 the $s_\alpha$, $\alpha\in\Phi^+(\lambda)=\Phi^+\cap\Phi(\lambda)$, such that $s_\alpha(\Phi^+(\lambda)\backslash\{\alpha\})=
 \Phi^+(\lambda)\backslash\{\alpha\}$. Then $W_0(\lambda)=\{w\in W(\lambda)\,|\, w\cdot\lambda=\lambda\}$
 is also a Coxeter system, generated by reflections  $s_\alpha$ with $\Phi_0(\lambda)=\{\alpha\in\Phi^{\text{\rm re}+}\,|\,\langle\lambda+\rho,\alpha^\vee\rangle=0$. Let $[\lambda]:=W(\lambda)\cdot\lambda$ and $[\lambda]^+=\{\mu\in[\lambda]\,|\,(\mu,\alpha_i^\vee)\in{\mathbb N}, 1\leq i\leq r\}$. An element $\lambda\in\sC^-$ is
 called regular if $W_0(\lambda)=\{1\}$.
  
 Let 
 $\bO[\lambda]$
   be the full subcategory of $\bO$ 
 consisting of modules $M$ such that $[M:L(\mu)]\not=0$ implies $\mu\in[\lambda]$. 
 
 Let $\lambda\in\sC^-$. Then $\lambda$ is the smallest element in $[\lambda]$, in the sense that
 all weights in $[\mu]$ belong to $W(\lambda)\cdot\lambda\subseteq\lambda+Q^+$. In particular, all objects of $\bO[\lambda]$ have finite length.
 As shown in \cite[Prop. 3.1]{KT2}, using work of Kac-Kazhdan \cite{KK}, if $\mu\in[\lambda]$, the Verma
 module $M(\mu)$ belongs to $\bO[\lambda]$.  Also,  $M^*(\mu)$ also belongs to
 $\bO[\lambda]$.  If $M\in\bO$ is indecomposable with $[M:L(\mu)]\not=0$, then $M\in\bO[\lambda]$
 \cite[Prop. 3.2]{KT2}. Or if $M$ is an indecomposable module linked to $L(\mu)$ via a chain of 
 indecomposable
 modules $M_i$ in $\bO$, $i=0, \cdots, n$ and irreducible modules $L(\mu_i)$ satisfying 
 $M=M_0$ and $ [M_i:L(\mu_i)]\not=0\not=[M_{i+1}:L(\mu_i)]$, and $\mu-\mu_n$, then $M\in\bO[\lambda]$.
 Moreover, if $M=L(\nu)$ with $\nu\in[\lambda]$, there is such a chain, using \cite[Prop. 31]{KT2} (which
 quotes \cite{KK}). That is, $\bO[\lambda]$ is the ``block" of $\bO$ associated to $L(\mu)$, and the
 irreducible modules $L(\nu)$, $\nu\in[\lambda]$, constitute a linkage class.

The inclusion $i_*:\bO^+\to\bO$ admits a right adjoint $i^!$ and a left adjoint $i^*$. Explicitly, given
$M\in\bO$,  $M_+=i^!M$ (resp., $M^+=i^*M)$ is the largest submodule (resp., quotient module) lying
in $\bO^+$.  For $\lambda\in\sC^-$, $\bO^+[\lambda]$ will denote the full subcategory of $\bO[\lambda]$ consisting of all $M$ such that $[M:L(\mu)]\not=0$ implies $\mu\in[\lambda]^+$.

 Consider the parabolic subalgebra 
 $${\mathfrak p}:= ({\mathbb C}[t]\otimes \og)\oplus {\mathbb C}c\oplus{\mathbb C}d$$
 of $\g$. Its Levi factor is denoted
 $${\sL}:=\og\oplus {\mathbb C}c\oplus{\mathbb C}d=\og +\fh.$$
 Any $\lambda\in \fh^*$ determined an irreducible ${\sL}$-module ${\mathfrak s}(\lambda)$
 whose restriction to $\og$ is always irreducible. Writing $\lambda=\overset\circ\lambda + k\chi + b\delta$,
 $k,b\in\mathbb C$,
 ${\mathfrak s}(\lambda)$ is finite dimensional if and only if $\overset\circ\lambda\in X^+$ (and in this case its
 restriction to $\og$ is then the finite dimensional irreducible module $V(\overset\circ\lambda)$ of highest
 weight $\overset\circ\lambda$).
 
 \begin{prop}Let $\mu=\overset\circ\mu + k\chi + b\delta\in\fh$, with $k<-g$,  and $\overset\circ\mu
 $  dominant on $\overset\circ\fh$.   Then
 $$\begin{cases}M(\mu)^+\cong U(\fg)\otimes_{U(\frak p)}{\mathfrak s}(\mu);\\
  M(\mu)_+\cong\Hom_{U({\mathfrak p})}(U(\fg),{\mathfrak s}(\mu)).\end{cases}$$
  Also, $(M(\mu)^+)^\star\cong M^*(\mu)_+$.\end{prop}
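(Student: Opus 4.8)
The plan is to identify the largest $\bO^+$-quotient $M(\mu)^+$ with the parabolic (generalized) Verma module $N(\mu):=U(\fg)\otimes_{U(\mathfrak p)}\mathfrak s(\mu)$, and then to obtain the two remaining isomorphisms from this one by applying the duality $M\mapsto M^\star$. (Here ``dominant'' means dominant integral, so $\mathfrak s(\mu)$ is finite dimensional; and $k<-g$ puts $\mu$ in the non-critical region $\sC$, so all modules in sight have finite length.) First I would exhibit $N(\mu)$ as a quotient of $M(\mu)$. Writing $\mathfrak u:=t\mathbb C[t]\otimes\og$ for the nilradical of $\mathfrak p$ and noting that $\mathfrak u$ kills the highest weight line, transitivity of induction gives $M(\mu)\cong U(\fg)\otimes_{U(\mathfrak p)}M_\sL(\mu)$, where $M_\sL(\mu)=U(\sL)\otimes_{U(\sL\cap\fb)}\mu$ is the Verma module for the Levi factor $\sL$, inflated to $\mathfrak p$. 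Since $U(\fg)$ is $U(\mathfrak p)$-free by PBW, the functor $U(\fg)\otimes_{U(\mathfrak p)}(-)$ is exact, so the surjection $M_\sL(\mu)\twoheadrightarrow\mathfrak s(\mu)$ induces a surjection $M(\mu)\twoheadrightarrow N(\mu)$ with kernel $\sum_{i=1}^{r}U(\fg)w_i$, where $w_i:=f_i^{m_i+1}v_\mu$ and $m_i:=\langle\overset\circ\mu,\alpha_i^\vee\rangle\in\mathbb N$: indeed $M_\sL(\mu)|_{\og}$ is the $\og$-Verma module of the dominant weight $\overset\circ\mu$, whose maximal submodule is classically generated by these singular vectors (and the $\og$-submodules of $M_\sL(\mu)$ are automatically $\sL$-stable). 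A short computation with the defining relations shows that each $w_i$ is killed by every $e_j$, $0\le j\le r$: for finite $j\ne i$ and for $j=0$ because $\alpha_j-\alpha_i$, respectively $\delta-\theta_l-\alpha_i$, is not a root (here $\theta_l+\alpha_i\notin\oPhi$), and for $j=i$ by the $\mathfrak{sl}_2$-relation since $\langle\mu,\alpha_i^\vee\rangle=m_i$. Hence $w_i$ is a singular vector of weight $s_i\cdot\mu$, and $\langle s_i\cdot\mu,\alpha_i^\vee\rangle=-m_i-2<0$, so $s_i\cdot\mu$ is not $\og$-dominant.

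Next I would check that $N(\mu)$ lies in $\bO^+$ and is the largest $\bO^+$-quotient of $M(\mu)$. For membership, $N(\mu)$ is a quotient of $M(\mu)\in\bO$, hence lies in $\bO$; and by PBW $N(\mu)\cong U(\mathfrak u^-)\otimes_{\mathbb C}\mathfrak s(\mu)$ as $\og$-modules, with $\mathfrak u^-:=t^{-1}\mathbb C[t^{-1}]\otimes\og$, which is $\og$-integrable because the associated graded of $U(\mathfrak u^-)$ is $S(\mathfrak u^-)$, a locally finite $\og$-module; so every composition factor $L(\nu)$ of $N(\mu)$ has $\nu$ dominant on $\og$, i.e.\ $N(\mu)\in\bO^+$. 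For universality, let $\phi\colon M(\mu)\to X$ be a $\fg$-map with $X\in\bO^+$; it suffices to see $\phi(w_i)=0$ for every $i$. If $\phi(w_i)\ne0$ then, being a singular vector of weight $s_i\cdot\mu$, it generates a nonzero homomorphic image of $M(s_i\cdot\mu)$, so $L(s_i\cdot\mu)$ is a composition factor of $X$; this contradicts $X\in\bO^+$, since $s_i\cdot\mu$ is not $\og$-dominant. Therefore $\phi$ factors through $N(\mu)$, and $M(\mu)^+\cong N(\mu)$.

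For the remaining assertions I would use the duality $M\mapsto M^\star$. It is an exact contravariant self-equivalence of $\bO$ with $L(\nu)^\star\cong L(\nu)$, so it preserves $\bO^+$ and hence interchanges the left adjoint $i^*=(\,\cdot\,)^+$ and the right adjoint $i^!=(\,\cdot\,)_+$ of the inclusion $i\colon\bO^+\hookrightarrow\bO$; a short Yoneda argument then gives $(M^+)^\star\cong(M^\star)_+$ for every $M\in\bO$, which for $M=M(\mu)$ reads $(M(\mu)^+)^\star\cong M^*(\mu)_+$. Applying $\star$ to $M(\mu)^+\cong N(\mu)$ identifies this module with $N(\mu)^\star$, and $N(\mu)^\star$ is nothing but the (restricted) coinduced module $\Hom_{U(\mathfrak p)}(U(\fg),\mathfrak s(\mu))$: this is the parabolic costandard module, $\star$-dual to the parabolic standard module $U(\fg)\otimes_{U(\mathfrak p)}\mathfrak s(\mu)$, the identification being the usual standard/costandard duality (available because $\mathfrak s(\mu)^\star\cong\mathfrak s(\mu)$ as $\sL$-modules). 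This yields the middle isomorphism too. I expect the one genuinely substantive point to be the universality step of the second paragraph --- pinning down $\ker(M(\mu)\twoheadrightarrow N(\mu))$ precisely as the submodule generated by the $w_i$, and excluding their images from any object of $\bO^+$; the remainder is routine bookkeeping with induction, PBW, and the duality.
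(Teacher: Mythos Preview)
Your argument is correct and follows essentially the same route as the paper for the first isomorphism: both factor $M(\mu)$ through $U(\fg)\otimes_{U(\mathfrak p)}M_\sL(\mu)$ and show that the kernel of the map to $N(\mu)$ cannot survive in any object of $\bO^+$. The paper phrases this in terms of composition factors $Y$ of the $\mathfrak p$-module $N$ (whose induced heads have non-$\og$-dominant highest weights), while you work with the explicit singular vectors $w_i=f_i^{m_i+1}v_\mu$; these are two packagings of the same idea.

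For the remaining assertions the paper merely says ``a similar argument'' and ``obvious'', whereas you deduce them from the first via the duality $(\,\cdot\,)^\star$, using that $\star$ interchanges $i^*$ and $i^!$. Your route is cleaner and avoids repeating the argument. Note that what you actually prove (and what the paper uses in Table~1 and in the final clause of the proposition) is $M^\star(\mu)_+\cong\Hom_{U(\mathfrak p)}(U(\fg),\mathfrak s(\mu))$; the ``$M(\mu)_+$'' in the displayed statement is evidently a typo for $M^\star(\mu)_+$, and you have correctly interpreted it. One small aside: the parenthetical remark that $k<-g$ forces finite length is not quite right as stated (non-criticality alone does not give this), but your proof nowhere uses finite length, so this is harmless.
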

 
 \begin{proof} Each subspace $t^n\otimes\og\subset\fg\subset U(\g)$, $n\in\mathbb Z$, is a $\og$-submodule
 of $U(\g)$ under the adjoint action, isomorphic to the adjoint module $\og$. Obviously, $U(\g)\otimes_{U({\mathfrak p})}{\mathfrak s}(\mu)$,  as a left $\g$-module under multiplication, is the homomorphic image
 of a direct sum of modules $(t^{n_1}\otimes\og)\otimes(t^{n_2}\otimes\og)\otimes\cdots\otimes (t^{n_m}\otimes
 \og)\otimes{\mathfrak s}(\mu)$, $n_1,\cdots, n_m\in{\mathbb Z}^{\leq 0}$, $m\in {\mathbb N}$.  All these tensor products are finite dimensional $\og$-modules. Thus, if $L(\nu)$ is a $\g$-composition factor of
 $U(\g)\otimes_{U({\mathfrak p})}{\mathfrak s}(\mu)$, its highest weight space must generate a finite
 dimensional $\og$-module. In particular, $(\nu,\alpha_i^\vee)\in{\mathbb N}$, for $i=1,\cdots n$.
 
Let $U(\sL)\otimes_{U(\sL\cap{\mathfrak b})}\mu$ be the Verma module for $\sL$ with  
 highest weight $\mu$. It inflates naturally to $\mathfrak p$ as $U({\mathfrak p})\otimes_{U({\mathfrak b})}\mu$. There is an exact sequence 
 \begin{equation}\label{exact}0\to N\to U({\mathfrak p})\otimes_{U({\mathfrak b})}\mu\to {\mathfrak s}(\mu)\to 0
 \end{equation}
 of $\mathfrak p$-modules. From the classical theory of $\og$-Verma modules, the highest weight $\varpi$
 of any composition factor $Y$ of $N$ has the property that $(\varpi,\alpha_i^\vee)<0$ for some $i$. The
 section
 $U(\g)\otimes_{U({\mathfrak p})}Y$ of $U(\g)\otimes_{U({\mathfrak p})}N$ has an irreducible head
 with the same highest weight $\varpi$. 
  
  It follows now, by tensor inducing  the exact sequence (\ref{exact}) of $\mathfrak p$-modules, that
  $U(\g)\otimes_{U({\mathfrak p})}{\mathfrak s}(\mu)$ is the largest  quotient of $M(\mu)=
  U(\g)\otimes_{U({\mathfrak b})}\mu$ with all composition factors in $\bO^+$.  That, $M(\mu)^+
  \cong U(\g)\otimes_{U({\mathfrak p})}{\mathfrak s}(\mu)$.
  
  A similar argument establishes the assertion for $M(\mu)_+$ and the final assertion is obvious. \end{proof}

 \section{Highest weight categories}Throughout this section, fix $\lambda\in\sC^-$. The set $[\lambda]=W(\lambda)\cdot\lambda$ is a poset, putting $\mu\leq\nu$ if and only if   
  $\nu-\mu\in Q^+:=\sum_{i=0}^r{\mathbb N}\alpha_i$.\footnote{If $\Gamma$ is a poset ideal in a poset
  $\Lambda$, i.~e., if $\nu\leq\gamma\in\Gamma\implies\nu\in\Gamma$, we write $\Gamma\trianglelefteq\Lambda$. We will also consider other partial orders on $[\lambda]$ in this section.} The set $[\lambda]$ has a unique minimal element,
  namely, $\lambda$. If $\nu\in[\lambda]$, then $\{\mu\in[\lambda]\,|\,\mu\leq\nu\}$ is a finite poset ideal
  in $[\lambda]$. If $\Gamma\subseteq[\lambda]$, let $\bO[\Gamma]$ be the full subcategory of $\bO$
  consisting of objects with have ``composition factors" (in the sense of \cite[p.151 ]{Kac}) $L(\gamma)$, $\gamma\in\Gamma$. It will often convenient (in \S\S6,7) to denote $\bO[\Gamma]$ by $\bO^\Gamma[\lambda]$ where $\lambda$ needs to be mentioned.

We will consider the following categories of $\g$-modules. In each case, the irreducible modules are
indexed, up to isomorphism, by a poset $\Gamma$ in $[\lambda]$ or $[\lambda]^+$.   In case $\Gamma\subseteq[\lambda]^+$, we let $\bO^+[\Gamma]= \bO^{\Gamma,+}[\lambda]$ be the full subcategory of $\bO^+$ consisting of
modules composition factors $L(\nu)$, $\nu\in\Gamma$. Also, for $\nu\in\Gamma$, there are given two modules
$\Delta(\nu)$ and $\nabla(\nu)$ in the category.

An object $X$ in $\bO^\infty[\lambda]$ (resp., $\bO^{+,\infty}[\lambda]$) is by definition a directed union $\{X^\Gamma\}_\Gamma$ of modules
$X^\Gamma\in\bO^\Gamma[\lambda]$ ranging over finite $\Gamma\trianglelefteq [\lambda]$ (resp., $\Gamma^+\trianglelefteq [\lambda]^+$). 
 
 \begin{thm}\label{First theorem of section 5}  For $\lambda\in\sC^-$, each of the categories listed in Table \ref{variouscategories}  below is a highest weight category (in the sense of \cite{CPS-1})
 with standard (Weyl) modules $\Delta(\nu)$, costandard modules $\nabla(\nu)$ and indicated poset.
 In particular, each of these categories has enough injective objects. \end{thm}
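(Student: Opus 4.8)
The plan is to verify, one row of Table~\ref{variouscategories} at a time, the axioms of \cite{CPS-1} for a highest weight category, leaning on two devices set up above: the contravariant exact duality $M\mapsto M^\star$ on $\bO$, which will let us trade ``enough injectives with $\nabla$-filtrations'' for the dual assertion about projectives and $\Delta$-filtrations; and the adjoint triple $i^*\dashv i_*\dashv i^!$ between $\bO^+$ and $\bO$, which will let us deduce the ``$\bO^+$'' rows from the ``$\bO$'' rows (with the earlier Proposition identifying $M(\mu)^+=U(\fg)\otimes_{U({\mathfrak p})}{\mathfrak s}(\mu)$ playing the role of ``$i^*$ of a Verma is a parabolic Verma'').

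First I would reduce to finite truncations. For a finite poset ideal $\Gamma\trianglelefteq[\lambda]$ (resp.\ $\Gamma\trianglelefteq[\lambda]^+$), the category $\bO^\Gamma[\lambda]$ (resp.\ $\bO^{\Gamma,+}[\lambda]$) is a Serre subcategory of $\bO[\lambda]$ whose objects have finite length, and, since $\lambda\in\sC^-$ is the minimal element of $[\lambda]$, we have $[\lambda]=\bigcup_\Gamma\Gamma$ with every lower set $\{\mu\le\nu\}$ finite. By the standard direct-limit formalism for highest weight categories in \cite{CPS-1}, it then suffices to prove each such finite truncation is a highest weight category with the asserted $\Delta(\nu)$, $\nabla(\nu)$; the ``$\infty$''-versions $\bO^\infty[\lambda]$, $\bO^{+,\infty}[\lambda]$ and the untruncated $\bO[\lambda]$, $\bO^+[\lambda]$ then inherit the structure, the ``$\infty$''-versions in particular acquiring enough injectives (injective hulls being formed as unions over $\Gamma$). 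The remaining rows reduce to these: the $\wg$-module categories ($\sO_k$ and its truncations, and the subcategories cut out of $\Mres_k(\wg)$) receive their structure by transport along the equivalences of Proposition~\ref{Polo} and Corollary~\ref{corbelow}, while the $\fg$-categories at a fixed level $k$ and generalized Casimir eigenvalue $a$ are $\star$-stable unions of blocks of $\bO$.

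Next I would pin down the standard and costandard objects and check the defining constituent condition. In the ``$\bO$'' rows put $\Delta(\nu)=M(\nu)$ and $\nabla(\nu)=M^*(\nu)=M(\nu)^\star$: by the Kac--Kazhdan linkage invoked in \cite[Prop.~3.1]{KT2}, the composition factors of $M(\nu)$ are the $L(\mu)$ with $\mu\le\nu$ in $[\lambda]$, with $L(\nu)$ occurring exactly once (as head), so $M(\nu)\in\bO^\Gamma[\lambda]$ whenever $\nu\in\Gamma$; applying $\star$ gives the dual facts for $\nabla(\nu)$, and $\Ext^1_{\bO}(\Delta(\nu),\nabla(\mu))=0$ for all $\nu,\mu$ by the usual Verma-module argument. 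In the ``$\bO^+$'' rows put $\Delta(\nu)=M(\nu)^+$ and $\nabla(\nu)=(M(\nu)^+)^\star$; the earlier Proposition identifies $M(\nu)^+$ with the largest quotient of $M(\nu)$ lying in $\bO^+$, so it has simple head $L(\nu)$ and all other factors indexed by $\mu<\nu$, and it identifies $(M(\nu)^+)^\star$ with $M^*(\nu)_+$, which is therefore the largest submodule of $M^*(\nu)$ in $\bO^+$ and has socle $L(\nu)$ with all other factors $<\nu$.

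Finally I would produce enough projectives carrying $\Delta$-filtrations and then dualize. For $\bO^\Gamma[\lambda]$ one constructs the projective cover $P_\Gamma(\nu)$ of $L(\nu)$ in the usual manner --- as a quotient of a finite sum of Verma modules $M(\mu)$, $\mu\in\Gamma$, with the unwanted head constituents removed, using $\Ext^1_{\bO}(M(\mu),M^*(\mu'))=0$ --- and checks that $P_\Gamma(\nu)$ has a $\Delta$-filtration with top $\Delta(\nu)$ and remaining sections $\Delta(\mu)$, $\mu>\nu$, of multiplicity $[\nabla(\mu):L(\nu)]$; alternatively one cites the established affine BGG theory for the block $\bO[\lambda]$, $\lambda\in\sC^-$ (\cite{KL}, \cite{KT2}). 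Dualizing via $M\mapsto M^\star$ --- a contravariant self-equivalence of $\bO^\Gamma[\lambda]$ (and, since it preserves formal characters and satisfies $L(\mu)^\star\cong L(\mu)$, of $\bO^{\Gamma,+}[\lambda]$) interchanging $\Delta(\nu)\leftrightarrow\nabla(\nu)$ --- yields enough injectives with $\nabla$-filtrations in $\bO^\Gamma[\lambda]$. For the ``$\bO^+$'' rows one passes through the adjoints: $i_*$ is exact, so $i^!$ preserves injectives, the $R^{>0}i^!$ vanish on dual Verma modules (equivalently $i^!$ is exact on $\nabla$-filtered objects), and $i^!M^*(\nu)=(M(\nu)^+)^\star=\nabla(\nu)$, so the $i^!I_\Gamma(\nu)$ (for $\nu\in\Gamma\cap[\lambda]^+$) give enough injectives in $\bO^{\Gamma,+}[\lambda]$ with $\nabla$-filtrations --- verifying the \cite{CPS-1} axioms. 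I expect the main obstacle to be exactly this last point: producing enough projectives (equivalently injectives) with $\Delta$- (equivalently $\nabla$-) filtrations in the base case $\bO^\Gamma[\lambda]$ uniformly over \emph{regular and singular} $\lambda\in\sC^-$, and controlling those filtrations under $i^*/i^!$ --- that is, the vanishing of the higher derived functors on (dual) Vermas and the compatibility of $\star$ with the parabolic induction $U(\fg)\otimes_{U({\mathfrak p})}(-)$ --- the rest being the bookkeeping of checking that each entry of Table~\ref{variouscategories} is handled either directly, via $i^*/i^!$, or via Proposition~\ref{Polo}/Corollary~\ref{corbelow}.
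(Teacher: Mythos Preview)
Your overall architecture---reduce to finite poset ideals $\Gamma$, verify the \cite{CPS-1} axioms there by producing projectives with $\Delta$-filtrations (then dualize via $\star$), and build the ``$\infty$'' versions as directed unions of the $\Gamma$-truncations---is exactly what the paper does. Two points of divergence are worth noting.

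First, the paper's argument is considerably shorter because it handles both $\bO[\Gamma]$ and $\bO^+[\Gamma]$ by a single citation: Rocha-Caridi and Wallach \cite[Lemma~10]{RR} construct projectives with (generalized) Verma filtrations in truncated categories for graded Lie algebras, and their result already covers the parabolic case. So the paper does not need your adjoint-triple argument at all---it simply observes that \cite{RR} treats generalized Verma modules directly, hence $\bO^+[\Gamma]$ is a highest weight category for the same reason $\bO[\Gamma]$ is. Your route via $i^!$ (transporting injectives from $\bO^\Gamma[\lambda]$ to $\bO^{\Gamma,+}[\lambda]$ and checking that $\nabla$-filtrations survive) is a legitimate alternative, but the step you flag as the main obstacle---exactness of $i^!$ on $\nabla$-filtered objects, equivalently the vanishing $R^{>0}i^!M^*(\nu)=0$---is genuine extra work that the paper's approach sidesteps entirely. (It can be done, but you would essentially be reproving part of \cite{RR}.) What your approach buys is a cleaner conceptual reduction of the parabolic case to the Borel case; what the paper's approach buys is brevity.

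Second, a minor misreading: Table~\ref{variouscategories} contains only the four categories $\bO[\Gamma]$, $\bO^+[\Gamma]$, $\bO^\infty[\lambda]$, $\bO^{+,\infty}[\lambda]$. The $\wg$-module categories and $\sO_k$ are not in the table; they are treated later (Proposition~\ref{ideals}) by transport along the equivalences of Proposition~\ref{Polo} and Corollary~\ref{corbelow}, as you anticipate, but they are not part of the present theorem.
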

 
 \begin{proof} The fact that $\bO[\Gamma]$ is a highest weight category follows from the dual
of the definition \cite{CPS-1} and the fact that projectives in suitable ``truncated" categories have
 Verma module filtrations \cite[Lemma 10]{RR}. The latter reference does not use an arbitrary
 poset ideal $\Gamma$, but every such $\Gamma$ is contained in one of theirs, which is sufficient, see  
  \cite[Lemma 2.3]{Fiebig0} for the case when $\Gamma$ is any ideal generated by a single element.
  
In \cite{RR}, the authors also treat the parabolic case, and generalized Verma modules. Thus, 
it follows similarly that $\bO^{+}[\Gamma]$ is a highest weight category. 

If $\Gamma\leq\Gamma'$ are two poset ideals, then the injective hull $I_\Gamma(\nu)$ of any
given irreducible  $L(\nu)$ in $\bO[\Gamma]$ embeds in the injective hull $I_{\Gamma'}(\nu)$ in 
$\bO[\Gamma']$. Taking a directed union over the chain of poset ideals $\Gamma_n$, $n\in{\mathbb N}$,
with $\bigcup \Gamma_n\supseteq[\lambda]$ gives an injective hull in $\bO^\infty[\lambda]$. It follows easily
from \cite{CPS-1} that $\bO^\infty[\lambda]$ is a highest weight category. Similarly, $\bO^{+,\infty}[\lambda]$ is
also a highest weight category.   
 \end{proof}
 
\begin{table}
\begin{center}
\begin{tabular}{| l | r | r | r |}
\hline
Category & poset & $\Delta(\nu)$ & $\nabla(\nu)$ \\  \hline
$\bO[\Gamma] $ & $\Gamma\trianglelefteq[\lambda]$ & $M(\nu)$ & $M^\star(\nu)$\\  \hline
$\bO^{+}[\Gamma]$ & $\Gamma\trianglelefteq [\lambda]^+$ & $M(\nu)^+$& $M^\star(\nu)_+$\\ \hline
$\bO^\infty[\lambda]$ & $[\lambda]$ & $M(\nu)$ & $M^\star(\nu)$\\ \hline
$\bO^{+,\infty}[\lambda]$ & $[\lambda]^+$ & $M(\nu)^{+}$ & $M^*(\nu)_{+}$\\ \hline
\end{tabular}
\caption{Various categories}\label{variouscategories}
\end{center}
\end{table} 
 
 \begin{rem}\label{categories} (a)  
  The category $\bO[\lambda]$, $\lambda\in\sC^-$, is the union $\bigcup_\Gamma\bO[\Gamma]$ over all finite poset ideals $\Gamma\subset[\lambda]$ (with respect to $\leq$). The category $\bO[\lambda]$ satisfies
 most of the axioms in \cite{CPS-1} for a highest weight category, though not all. (There are not enough
 injective objects.)  However, each full subcategory $\bO[\Gamma]$ does have enough injective
 and projective objects, and is a highest weight category.  As we have seen, the categories $\bO[\Gamma]$ can be used to used to formally complete $\bO[\lambda]$ to a highest weight category
 $\bO^\infty[\lambda]$.  Some authors, speaking more informally, simply call such categories
 (like $\bO[\lambda]$) a highest weight
 category. 
 
 Similar remarks apply to $\bO^+[\lambda]$, defined to be $\bO^+\cap\bO[\lambda]$, as well as many of the
 other categories we have introduced.

(b)  For $\lambda\in \sC^-$, there is another poset structure $\leq^W=\leq^{W(\lambda)}$ on $[\lambda]=W(\lambda)\cdot\lambda$.
 Explicitly, given $\mu\in W(\lambda)\cdot\lambda$, write $\mu=w_\mu\cdot\lambda$ where $w_\mu\in W(\lambda)$ is the unique element $w\in W(\lambda)$ of minimal length such that $w\cdot\lambda=\mu$.
 Then, for $\mu,\nu\in[\lambda]$, put $\mu\leq^W\nu$ if and only if $w_\mu\leq w_\nu$ in the Bruhat-Chevalley
 order on the Coxeter group $W(\lambda)$. We will show in Remark \ref{natural order remark}(a) that the above categories are
 highest weight categories  using $\leq^W$ (or its restriction to $[\lambda]^+$) with
 the same standard and costandard objects \end{rem}

\begin{defn}\label{partialordering}For $\mu,\nu\in\wh^*$, put $\nu\wleq\mu$ provided that the following two conditions hold: 
\begin{enumerate}
\item $\mu,\nu$ have the same level $k\not=-g$, i.~e., $\mu(c)=\nu(c)\not=-g$; 

\item  $\nu^{k,a}\leq \mu^{k,a}$ for some $a\in \mathbb C$. 
\end{enumerate}
\end{defn}

Condition (2) does not actually depend on $a$. Indeed, writing $\mu=\overset\circ\mu+k\chi$, 
$\nu=\overset\circ\nu+k\chi$, then $\mu^{k,a}=\overset\circ\mu+k\chi +b\delta$ and $\nu^{k,a}=\overset\circ\nu+k\chi+ b'\delta$, then 
$$\begin{cases} b=\frac{a-(\overset\circ\mu + 2\orho,\omu)}{k+g}\\
b'=\frac{a-(\overset\circ\nu+2\orho,\overset\circ\nu)}{k+g}.\end{cases}$$
Thus, $\nu^{k,a}\leq\mu^{k,a}$ if and only if
\begin{enumerate}
\item[(1)] $b-b'\in\mathbb N$;
\item[(2)] $\overset\circ\mu-\overset\circ\nu + (b-b')\theta_l\in \overset\circ Q^+$
\end{enumerate}
The parameter $a$ drops out of $b-b'$ which depends only on $\mu,\nu$ and $k$. Thus, any $a$
may be used in defining $\nu\wleq\mu$

For $\lambda\in\sC^-$,  we temporarily write $[\lambda]^\sim$ for the collection of all $\widetilde\mu:=
\mu|\wh$, for $\mu\in[\lambda]$. As a consequence of the discussion, we have:

\begin{prop}\label{aboveprop} The restriction map $[\lambda]\to[\lambda]^\sim$ is a poset isomorphism,
if $[\lambda]$ is given its usual poset structure via $\leq$ above, and if $[\lambda]^\sim$
is given its poset structure via $\wleq$.\end{prop}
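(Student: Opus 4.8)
The plan is to show that the restriction map $[\lambda]\to[\lambda]^\sim$, $\mu\mapsto\widetilde\mu=\mu|_{\wh}$, is a bijection that is order-preserving and order-reflecting with respect to $\leq$ on $[\lambda]$ and $\wleq$ on $[\lambda]^\sim$. First I would record bijectivity. All elements of $[\lambda]=W(\lambda)\cdot\lambda$ share a common level $k=(\lambda+\rho,\delta)<-g$, and by Proposition \ref{restrictionweights} distinct elements of $[\lambda]\subseteq\sC$ have distinct restrictions to $\wh$; surjectivity is the definition of $[\lambda]^\sim$. Thus the map is a bijection, and since both partial orders have $\lambda$ (resp.\ $\widetilde\lambda$) as unique minimum, it remains only to compare the orders on pairs.

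Next I would set up the translation between $\leq$ and $\wleq$ using the weights $\mu^{k,a}$ introduced before Proposition \ref{restrictionweights}. Fix any $a\in\mathbb C$ (the preceding discussion shows the choice is immaterial). For $\mu,\nu\in[\lambda]$, Proposition \ref{restrictionweights} and the surrounding remarks give that $\mu=\mu^{k,a}$ and $\nu=\nu^{k,a}$, since each of $\mu,\nu$ has level $k$ and $\Omega$ acts on $L(\mu)$, $L(\nu)$ with eigenvalue $a=(\mu+\rho,\mu+\rho)-(\rho,\rho)=(\nu+\rho,\nu+\rho)-(\rho,\rho)$ (the two expressions agree because $\mu,\nu$ lie in one $W(\lambda)$-orbit under the dot action, which preserves $(\,\cdot+\rho,\cdot+\rho)$). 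Hence $\mu$ and $\nu$ are literally recovered from $\widetilde\mu=\mu^k$ and $\widetilde\nu=\nu^k$ by the formula $\mu=\overset\circ\mu^{k,a}=\mu^{k,a}$, i.e.\ by adding back the $\delta$-coefficient $b$ determined by $k$ and $\overset\circ\mu$. Therefore, by the very Definition \ref{partialordering} of $\wleq$, we have $\widetilde\nu\wleq\widetilde\mu$ if and only if $\nu^{k,a}\leq\mu^{k,a}$, which is just $\nu\leq\mu$. This is the crux: once one knows the elements of $[\lambda]$ coincide with their own ``$(k,a)$-normalizations'' $\mu^{k,a}$, the two orders are transported to each other tautologically.

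The one point that needs a little care—and the step I expect to be the main (minor) obstacle—is justifying that every $\mu\in[\lambda]$ really satisfies $\mu=\mu^{k,a}$ for the single common value $a$, rather than just for the $a$ attached individually to each weight. This is exactly where the fact that $[\lambda]$ is a single dot-orbit under $W(\lambda)$ is used: $W$ (hence $W(\lambda)$) acts by isometries, so $(\mu+\rho,\mu+\rho)$ is constant on the orbit, forcing all the $L(\mu)$ to have the same Casimir eigenvalue $a$ (this is the computation already carried out in the proof of Proposition \ref{restrictionweights}). With that in hand, the isomorphism of posets follows, completing the proof. In LaTeX one would simply write: ``Both $[\lambda]$ and $[\lambda]^\sim$ have unique minimal elements $\lambda$, $\widetilde\lambda$; by Proposition \ref{restrictionweights} the restriction map is injective, and it is surjective by definition, so it is a bijection. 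All weights in $[\lambda]$ have a common level $k<-g$ and, lying in a single $W(\lambda)$-dot-orbit, a common Casimir eigenvalue $a$; hence each $\mu\in[\lambda]$ equals $\mu^{k,a}$. By Definition \ref{partialordering}, for $\mu,\nu\in[\lambda]$ one has $\widetilde\nu\wleq\widetilde\mu\iff\nu^{k,a}\leq\mu^{k,a}\iff\nu\leq\mu$, so the bijection is a poset isomorphism.''
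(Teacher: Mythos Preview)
Your proof is correct and follows essentially the same approach as the paper's own proof: both establish bijectivity via Proposition~\ref{restrictionweights}, observe that the inverse to restriction is $\widetilde\mu\mapsto\widetilde\mu^{k,a}=\mu$ (using the common Casimir eigenvalue $a$ on the orbit $[\lambda]$), and then read off the order equivalence directly from Definition~\ref{partialordering}. Your version is more explicit about why every $\mu\in[\lambda]$ satisfies $\mu=\mu^{k,a}$ for a single common $a$, which the paper leaves implicit in the phrase ``where $\lambda=\lambda^{k,a}$.''
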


\begin{proof} The bijectivity of restriction has already been established in Proposition \ref{restrictionweights}. As noted in
its proof, an inverse on $[\lambda]^\sim$ of restriction is provided by $\widetilde\mu\mapsto\widetilde\mu^{k,a}= \mu^{k,a}$, where $\lambda=\lambda^{k,a}$. As the definition of $\wleq$ shows, this inverse is 
order preserving, as is the restriction map itself.\end{proof}

We introduce some further categories, obtained by restricting to $\wg$ all the categories listed in
Table 1, as well as $\bO[\lambda]$ and $\bO^+[\lambda]$,  decorating the resulting strict image category with a ``tilde" (i.~e., changing $\bO$ to $\widetilde\bO$).
Each of these $\wg$-categories has an associated poset,
given in Table 1 for the corresponding $\g$-category. (We can view the posets as abstract
sets, useful for labeling irreducible, standard, and costandard modules. As such, there is no need to pass to
version using $([\lambda]^\sim,\wleq)$ in view of the poset isomorphism in Proposition \ref{aboveprop}.
 Keeping the Table 1 version eases our notational burden.) Thus, in the proposition below, we use the $\leq$ partial order on $[\lambda]$ and $[\lambda]^+$, whether or not we are dealing with categories of $\fg$ 
 or $\wg$-modules. We will extend the proposition to the partial ordering $\leq^W$ in Remark \ref{natural order
 remark}(a), as well as to additional partial orders $\leq_\nat$ discussed there.
   
 For use below, define 
$$\sC^-_{\text{\rm rat}}:=\{\lambda\in\sC^-\,|\,\langle\lambda,\alpha_i^\vee\rangle\in{\mathbb Z},\,{\text{\rm
for each}}\,
\,i=1,\cdots, r\, {\text{\rm and}}\,\lambda(c)\in{\mathbb Q}\}.$$
  In particular, each $\lambda\in\sC_{\text{\rm rat}}$ has level a rational number $k$ and $k<-g$ since
  $\lambda\in\sC^-$.   Note that, for $\lambda\in\sC^-$, $[\lambda]^+=\emptyset$ unless $\lambda\in \sC^-_{\text{\rm rat}}$.    
  
\begin{prop} \label{ideals}
Let $\lambda\in\sC^-_{\text{\rm rat}}$ and let $\Gamma\trianglelefteq[\lambda]$ (resp.,
$\Gamma^+\trianglelefteq[\lambda]^+$) be finite. The categories $\widetilde\bO[\Gamma]$, $\widetilde\bO^{+}[\Gamma^+]$,
$\widetilde\bO^\infty[\lambda]$, and $\widetilde\bO^{+,\infty}[\lambda]^+$ are all highest weight categories with
weight posets $\Gamma$, $\Gamma^+$, $[\lambda]$ and $[\lambda]^+$, respectively. Each of these categories is equivalent to its counterpart for $\g$-modules, as is each of the categories
$\widetilde\bO[\lambda]$ and $\widetilde\bO^+[\lambda]$. The functors providing these
equivalences are, in each case, given by the restriction functor of $\g$-modules to $\wg$-modules, and by applying
$F_{k,a}$ to objects in, say, a category $\widetilde\bO^\Gamma[\lambda]$, with $a$ determined by $\lambda$.
There is a natural common extension of the functors $F_{k,a}$ to the functors $F^\infty_{k,a}$ on
$\wO^\infty[\lambda]$, which is also inverse to the restriction functor.
\end{prop}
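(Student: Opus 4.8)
The plan is to reduce everything to the single-block equivalence already packaged in Proposition~\ref{Polo} (Kac--Polo) together with Corollary~\ref{corbelow}, and then check that the highest weight structure is preserved. First I would observe that all four categories $\widetilde\bO[\Gamma]$, $\widetilde\bO^+[\Gamma^+]$, $\widetilde\bO^\infty[\lambda]$, $\widetilde\bO^{+,\infty}[\lambda]^+$ sit inside $\Mres_k(\wg)$ (for $k$ the level of $\lambda$): an object of $\bO[\Gamma]$ has finite length with composition factors among the $L(\gamma)$, $\gamma\in\Gamma\subseteq[\lambda]\subseteq\sC$, hence its restriction to $\wg$ lies in $\sO_k\subseteq\Mres_k(\wg)$ by the discussion preceding Corollary~\ref{corbelow}; the $\bO^\infty$ version is handled by writing an object as a directed union of its truncations $X^{\Gamma'}$ and restricting termwise. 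Since $\lambda\in\sC^-_\rat$ fixes a single value $a$ (the common Casimir eigenvalue of all $L(\mu)$, $\mu\in[\lambda]$, by \cite[Prop. 11.36]{Carter}, as in the proof of Proposition~\ref{restrictionweights}), the functor $F_{k,a}$ of Proposition~\ref{Polo} is available, and the equation $\tau = d + \frac{\Omega-a}{-2(k+g)}$ from that proof shows $F_{k,a}$ sends an object all of whose composition factors have Casimir eigenvalue $a$ into $\bO^{+,\text{finite}}_{k,a}$ type categories. So on objects $F_{k,a}$ and restriction are mutually inverse by Corollary~\ref{corbelow} (applied blockwise to $\Gamma$, $\Gamma^+$) and by the Kac--Polo fullness for the non-truncated versions.

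Next I would verify that these inverse equivalences match up the labelling data, i.e.\ carry $L(\nu)$, $\Delta(\nu)$, $\nabla(\nu)$ on the $\g$-side to the tilde-objects with the same label $\nu$. For the irreducibles this is immediate: restriction of $L(\nu)$ to $\wg$ is irreducible (its highest weight vector still generates, and $\wg$ acts transitively on weight spaces since $c$ acts by the scalar $k$ and $d$ is redundant), so we may \emph{define} $\widetilde L(\nu):=L(\nu)|_{\wg}$, and these are pairwise non-isomorphic by Proposition~\ref{restrictionweights}. For the standard and costandard modules, restriction is exact and preserves formal characters for $\wh$, so $M(\nu)|_{\wg}$ has the universal property (highest weight $\nu^k$, generated by a one-dimensional $\widetilde{\mathfrak b}$-highest weight space) characterising the standard object, and similarly $M^\star(\nu)|_{\wg}$ has simple socle $\widetilde L(\nu)$; in the parabolic case one uses the explicit descriptions $M(\nu)^+\cong U(\fg)\otimes_{U(\fp)}\fs(\nu)$ from Proposition~5.4 and the fact that $\fs(\nu)|_{\og}=V(\olambda)$ is unaffected by restriction. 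Once the labelled data matches, the fact that being a highest weight category (with given standards/costandards and poset) is preserved under an equivalence of abelian categories gives the highest weight statement for the tilde categories, using Theorem~\ref{First theorem of section 5} on the $\g$-side as input; the poset is $(\,[\lambda],\leq)$ or $(\,[\lambda]^+,\leq)$, transported as an abstract index set (this is exactly the notational simplification flagged in the paragraph before Proposition~\ref{ideals}, and consistent with the poset isomorphism of Proposition~\ref{aboveprop}).

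Finally, for the common extension $F^\infty_{k,a}$ on $\wO^\infty[\lambda]$: given $M=\{M^{\Gamma'}\}_{\Gamma'}$ a directed union over finite $\Gamma'\trianglelefteq[\lambda]$, set $F^\infty_{k,a}(M):=\varinjlim F_{k,a}(M^{\Gamma'})$, using that $F_{k,a}$ commutes with the transition inclusions $M^{\Gamma'}\hookrightarrow M^{\Gamma''}$ (this is clear from the construction of $F_{k,a}$, which only adds a $d$-action defined via the locally finite operator $\tau$, compatibly with inclusions of $\wg$-submodules). Exactness and the fact that this is inverse to restriction follow by passing to the limit from the finite-truncation statements. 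The main obstacle I anticipate is not any one step but the bookkeeping around well-definedness of $a$ and the verification that $F_{k,a}$ really does land in the \emph{finite-length} (parabolic, correctly-truncated) subcategory rather than merely $\Mres_{k,a}$ --- i.e.\ checking that restriction followed by $F_{k,a}$ recovers exactly $M$ with no extra $d$-ambiguity; but this is precisely what the parenthetical remark at the end of the proof of Proposition~\ref{Polo} (``$d$ must act as the semisimple part \dots and $\frac{\Omega-a}{-2(k+g)}$ must act as the locally nilpotent part'') pins down, so it reduces to invoking that uniqueness together with Corollary~\ref{corbelow}.
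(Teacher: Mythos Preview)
Your proposal is correct and follows the same approach as the paper: use the Kac--Polo equivalence (Proposition~\ref{Polo}) between $\Mres_k(\wg)$ and $\Mres_{k,a}^{{\rm res},d}(\fg)$ via restriction and $F_{k,a}$, extend it to directed unions to handle the $\infty$-versions, and transport the highest weight structure from the $\fg$-side (Theorem~\ref{First theorem of section 5}) through the equivalence. The paper's own proof is extremely terse---it simply introduces the directed-union categories $\bM_k^{{\rm res},\infty}(\wg)$ and $\bM_k^{{\rm res},d,\infty}(\fg)$, observes $F_{k,a}$ extends ``in an obvious way'' to $F_{k,a}^\infty$, and declares the remaining equivalences ``obvious''---whereas you have spelled out why the standard and costandard objects correspond and why the highest weight axioms transport, which is exactly the content hidden behind the paper's ``obvious.''
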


\begin{proof} Let $\bM_k^{{\text{\rm res}},\infty}(\wg), \bM_k^{{\text{\rm res}},d,\infty}(\fg)$ denote the categories of $\wg,\fg$
modules, respectively, which are directed unions of objects in  $\bM_k^{{\text{\rm res}}}(\wg),
 \bM_k^{{\text{\rm res}},d}(\fg)$,
respectively. Then  the functors $F_{k,a}$ extend in an obvious way to functors $F_{k,a}^\infty$ on
the direct union categories, giving equivalences inverse to restriction. In particular, $F_{k,a}^\infty$
provides, $\widetilde\bO^\infty[\lambda]$ and $\widetilde\bO^{+,\infty}[\lambda]$ equivalences inverse to restriction to the versions without the ``tilde." The remaining equivalences are obvious.
\end{proof}

Each of the $\widetilde{\mathfrak g}$-modules categories above has irreducible, standard and costandard modules. These modules will be denoted by placing a ``tilde" over their $\g$-counterparts. Thus, $\wL(\mu)$, $\wM(\mu)$ and $\wM_\star(\mu)$ are
the irreducible, standard and costandard modules ofr $\widetilde\bO[\lambda]$, if $\mu\in[\lambda]$. If
$\mu\in[\lambda]^+$, then $\widetilde M^+(\mu)$, $\widetilde M_+(\mu)$ are the standard and costandard modules
for $\widetilde\bO^+[\lambda]^+$. All of these modules are the restrictions to $\wg$ of their $\g$-counterparts.

\begin{rem}\label{natural order remark} (a) 
 Let $\sE$ be an abstract highest weight category \cite{CPS-1} with weight poset $(\Lambda,\leq)$, and
 with costandard modules $\nabla(\lambda)$, $\lambda\in\Lambda$. Assume there also exist standard
 modules $\Delta(\lambda)$, $\lambda\in\Lambda$, and assume that $\Delta(\lambda)$ and $\nabla(\lambda)$
 have the same composition factors (with multiplicities). 
 There is a natural partial order $\leq_{\nat}$, at least
 when $\Delta(\lambda)$ and $\nabla(\lambda)$ have the same composition factors (which holds
 for all the categories above). More precisely, $\leq_{\nat}$ is the partial order generated by the requirement that $\mu\leq_{\nat} \nu$ when
 $[\Delta(\nu):L(\mu)]\not=0$. Let $\Lambda_\nat=(\Lambda,\leq_\nat)$ denote this new poset. Then $\sE$ is also a
 highest weight category with respect to $\Lambda_\nat$ with the same standard and costandard modules.
 If $\nu\leq_\nat\mu$, then clearly $\nu\leq\mu$. 
 F
 Now suppose that $\preceq$ is a partial order on the set $\Lambda$ such that, for each $\mu,\nu\in\Lambda$, 
 $\mu\leq_\nat\implies\mu\preceq\nu$.  Then it is easily seen that $\sE$ is a highest weight category with
 respect to $(\Lambda,\preceq)$, with the same standard and costandard modules. 
 Moreover, if $\Gamma$ is a $\preceq$-ideal in $\Lambda$, 
 it is also a $\leq_\nat$-ideal in $\Lambda$.  
 
Returning to the situation of Proposition \ref{ideals}, we have,
 in addition to the partial orders $\leq_\nat$ and $\leq$ on $[\lambda]$, there is also the partial order $\leq^W$ discussed in Remark 5.3(b) and a further partial ordering $\uparrow$ (discussed in \S9, Appendix I). 
 The orders $\leq^W$ and $\uparrow$ are shown to the same on $[\lambda]$ in Proposition \ref{First prop of appendix} below.  It is also true
 that $\mu\leq_\nat\nu$ implies in an obvious way, using the remarks above Proposition \ref{First prop of appendix} that $\mu\uparrow\nu$, and, thus, now using Proposition
 \ref{First prop of appendix}, $\mu\leq^W\nu$.
 In turn, $\mu\leq^W\nu$ implies $\mu\leq \nu$, when $\lambda\in\sC^-$. We summarize this discussion
 as follows.
 \begin{equation}\label{implications}
 \mu\leq_\nat\nu\implies\mu\leq^W\nu\implies\mu\leq \nu \,\,(\mu,\nu\in[\lambda]).\end{equation}
 There is a version of these implications for $[\lambda]^+$, when $\lambda\in\sC^-_\rat$.
 \begin{equation}\label{implicationsII}
 \mu\leq_\nat\nu\implies\mu\leq^W\nu\implies\mu\leq \nu\,\,(\mu,\nu\in[\lambda]^+).\end{equation}
 The meaning of $\leq_\nat$ in (\ref{implicationsII}) is not quite the same as it is in (\ref{implications})
 since the $\leq_\nat$ for $[\lambda]^+$ is computed with respect to different standard modules. However,
 if $\mu\leq_\nat\nu$ in (\ref{implicationsII}), then $\mu\leq_\nat\nu$ in (\ref{implications}).  This implies
 the validity of (\ref{implicationsII}). We will mostly be using (\ref{implicationsII}), so we have preferred 
 not to use separate notations for the two orders denoted $\leq_\nat$. (Finally, it is interesting to observe that
 in (\ref{implications}) we have $\leq_\nat=\leq^W$, though we will not need this fact.)
 
 The main conclusion to be drawn is that Proposition \ref{ideals} holds as written, if the order
 $\leq$ is replaced by $\leq^W$ or by $\leq_\nat$, although it must be understood that the meaning
 of $\leq_\nat$ varies between $[\lambda]$ and $[\lambda]^+$. A similar observation holds regarding
 Theorem \ref{First theorem of section 5}. 
 
(b) We can also define categories $\widetilde\bO_k$, $\widetilde\bO^+_k$ and $\widetilde\bO^{+,\text{\rm finite}}_k$, just as the 
 (strict) images under the restriction functor of the corresponding categories $\bO_k$,  $\bO^+_k$ and
 $\bO^{+,\text{\rm finite}}$ of $\fg$-modules. Each such strict image is a full subcategory of $\wg$-modules, by
 Proposition \ref{Polo}. (All of their objects belong to $\bM^{\text{res}}_k(\wg)$.) Also, $\widetilde\bO^{+,{\text{\rm finite}}}_k
 \cong\sO_k$. But it is not true that any of these new categories is equivalent to the category of $\g$-modules
 from which its name is derived, since, in particular, no generalized eigenvector of $\Omega$ has been specified.
 Thus, while $\sO_k\cong\widetilde\bO_k^{+,\text{finite}}$, the category $\sO_k$ is not equivalent to $\bO_k^{+,\text{finite}}$.
 Instead, $\sO_k$ is equivalent to $\bO^{+,\text{finite}}_{k,a}$ as Corollary \ref{corbelow} shows. (It is true that $\bO^{+,\text{finite}}_k$ is equivalent to a direct sum of copies of $\sO_k$.) This is not an issue, however, with the $\wg$-categories in Theorem \ref{First theorem of section 5}, since for example the generalized eigenvalue of the
 Casimir operator $\Omega$ on objects of $\bO^\infty[\lambda]$ is determined by $\lambda$. 
\end{rem}

In \cite{T}, Tanisaki described the group $W(\lambda)$ explicitly for any $\lambda\in \sC^-_{\text{\rm rat}}$.  He also describes the dot action of $W(\lambda)$ on
$[\lambda]$ in more explicit terms. Implicit in his discussion is a description of $\Phi(\lambda)$, together
with a set of fundamental roots (which can in any event be easily calculated). We return to this in \S7.1.
 
\section{Translation functors}
Let $P$ be a fixed $\mathbb Z$-lattice ${\mathfrak h}^*$ whose projection onto $\wh^*$ relative
to the decomposition (\ref{decomp}) is $\overset\circ P +{\mathbb Z}\chi$, where $\overset\circ P$
is the weight lattice of $\oPhi$ in $\ofhstar$.  Let $P^+$ be the set of all $\lambda\in P$ such that
$\langle \lambda,\alpha^\vee_i\rangle\in{\mathbb N}$ for all $i=0,\cdots, r$.

Given $\lambda,\mu\in\sC^-$ with $\mu-\lambda\in WP^+$, Kashiwara-Tanisaki \cite{KT2} define an
exact
translation functor
$$T^\lambda_\mu:\bO[\lambda]\to\bO[\mu].$$
The definition is the familiar one, taking a ``block" projection, after tensoring with an irreducible module
$L(\gamma)$, $\gamma\in P^+\cap W(\mu-\lambda)$. We summarize the key properties they prove
in the following proposition.

\begin{prop} \label{KTprop}(\cite[Prop. 3.6, 3.8]{KT2}) Assume $\lambda,\mu\in \sC^-$ satisfy $\mu-\lambda\in WP^+$
and $\Phi_0(\lambda)\subseteq\Phi_0(\mu)$. 

(a) For any $w\in W(\lambda)$,
$$T^\lambda_\mu M(w\cdot\lambda)\cong M(w\cdot\mu).$$

(b) Also,
 $$T^\lambda_\mu L(w\cdot\lambda))=\begin{cases} L(w\cdot\mu),\quad{ \text{\rm if}}\, w(\Phi^+_0(\mu)\backslash
   \Phi_0^+(\lambda))\subseteq\Phi^+(\lambda);\\
   0,\quad {\text{\rm otherwise.}}\end{cases}$$
   \end{prop}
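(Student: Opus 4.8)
The plan is to follow the standard translation-functor argument of Jantzen, adapted to the affine setting as in Kashiwara–Tanisaki, exploiting the fact that the relevant combinatorics are governed by the Coxeter group $W(\lambda)$ and its parabolic quotient by $W_0(\lambda)$. Part (a) is the easier half: since $T^\lambda_\mu$ is exact and defined by tensoring with a finite-dimensional-type module $L(\gamma)$ followed by a block projection, one computes that $M(w\cdot\lambda)\otimes L(\gamma)$ has a Verma filtration whose subquotients are the $M(w\cdot\lambda + \nu)$ with $\nu$ a weight of $L(\gamma)$; the block projection onto $\bO[\mu]$ then picks out exactly the summand with weight $w\cdot\lambda + w\gamma = w\cdot\mu$ (using $\mu - \lambda \in W\gamma$ and that $w\cdot\lambda$ determines its block). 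The point is that no \emph{other} weight $w\cdot\lambda + \nu$ can lie in $[\mu]$, which follows from the linkage principle (Kac–Kazhdan) and the hypothesis $\mu - \lambda \in WP^+$ together with $\Phi_0(\lambda)\subseteq\Phi_0(\mu)$: these ensure $w\cdot\mu$ is the unique candidate and that it occurs with multiplicity one. So $T^\lambda_\mu M(w\cdot\lambda)\cong M(w\cdot\mu)$.

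For part (b), I would first establish the dual statement for costandard modules, $T^\lambda_\mu M^\star(w\cdot\lambda)\cong M^\star(w\cdot\mu)$, by applying the duality $\star$ and the fact that $T^\lambda_\mu$ commutes with $\star$ (both are built from tensoring with a self-dual-up-to-twist module and a block projection). Then, since $L(w\cdot\lambda)$ embeds in $M^\star(w\cdot\lambda)$ with the quotient having composition factors $L(v\cdot\lambda)$ for various $v$, exactness of $T^\lambda_\mu$ gives that $T^\lambda_\mu L(w\cdot\lambda)$ is a submodule of $M^\star(w\cdot\mu)$, hence is either $0$ or has simple socle $L(w\cdot\mu)$ and, in the latter case, a well-understood character. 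The crux is then to decide which of the two occurs. Here the governing combinatorial condition is whether $w$ maps the ``extra'' reflections $\Phi^+_0(\mu)\setminus\Phi^+_0(\lambda)$ into $\Phi^+(\lambda)$: if it does, $w\cdot\mu$ is (a minimal-length representative in) the correct coset and $L(w\cdot\lambda)$ survives translation; if not, a reflection argument shows $L(w\cdot\lambda)$ already occurs in the ``wall-crossing'' part and its image is killed by the block projection.

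The main obstacle I expect is the second case of (b) — proving that $T^\lambda_\mu L(w\cdot\lambda) = 0$ when the condition on $w(\Phi^+_0(\mu)\setminus\Phi_0^+(\lambda))$ fails. The clean way to handle this is: pick a simple reflection $s = s_\alpha$ (relative to the fundamental system of $W(\lambda)$, or rather $W(\mu)$) with $\alpha \in \Phi^+_0(\mu)$, $w\alpha \in \Phi^-(\lambda)$, so $sw\cdot\lambda > w\cdot\lambda$ in the appropriate order but $sw\cdot\mu = w\cdot\mu$. One then shows $L(w\cdot\lambda)$ is a composition factor of the kernel (or cokernel) of a natural adjunction map $M(sw\cdot\lambda)\to M(w\cdot\lambda)$ or uses that translating $L(w\cdot\lambda)$ onto the $\mu$-wall factors through translating $L(sw\cdot\lambda)$, whose image must vanish because $sw\cdot\mu$ cannot be a highest weight in $\bO^+[\mu]$ for the relevant parabolic — and then an induction on length in $W(\lambda)$, or a direct socle/character count using $T^\lambda_\mu M^\star(w\cdot\lambda)\cong M^\star(w\cdot\mu)$, closes the argument. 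I would lean on \cite[Prop. 3.1, 3.2]{KT2} and the Kac–Kazhdan linkage results throughout, and in fact, since the statement is quoted verbatim from \cite[Prop. 3.6, 3.8]{KT2}, the cleanest ``proof'' is simply to cite those results, with the above sketch indicating why they hold; this is presumably what the authors do.
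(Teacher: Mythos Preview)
Your final sentence is exactly right: the paper gives no proof of this proposition at all, simply citing \cite[Prop.~3.6, 3.8]{KT2} in the statement header and moving on. Your sketch of the standard translation-functor argument is reasonable as background, but strictly speaking the paper's ``proof'' is just the citation, which you correctly anticipated.
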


\begin{rem}\label{minimal length} Recall that if $\lambda'\in[\lambda]$ we let $w_{\lambda'}\in W(\lambda)$
be the shortest element $x\in W(\lambda)$ such that $w\cdot\lambda=\lambda'$. The condition
$w(\Phi^+_0(\mu)\backslash\Phi^+_0(\lambda))\subseteq\Phi^+(\lambda)$ in Proposition \ref{KTprop}(b) is
equivalent to $w_{\lambda'}=w_{\mu'}$ where $\lambda'=w\cdot \lambda$ and $\mu'=w\cdot\mu$. 
In case $\lambda$ is regular, the condition is that $w=w_{\mu'}$. 
 \end{rem}

We add some additional useful properties of $T^\lambda_\mu$ after extending these functors in two ways.
First, the  ``block" projection definition of translation extends easily to $\bO^\infty[\lambda]$.  We use the
same notation $T^\lambda_\mu$ for this extension, so that now we have the functor
$$T^\lambda_\mu:\bO^\infty[\lambda]\to\bO^\infty[\mu].$$
Second, $T^\lambda_\mu$ and the natural equivalences $\widetilde\bO[\lambda]\cong
\bO^\infty[\lambda]$ and $\bO^\infty[\mu]\cong\widetilde\bO^\infty[\mu]$ define a composite
functor
$$\wT^\lambda_\mu:\widetilde\bO^\infty[\lambda]\to\widetilde\bO^\infty[\mu].$$
Therefore, there is a commutative diagram
$$\begin{matrix} \bO^\infty[\lambda]&\overset{T^\lambda_\mu}\longrightarrow & \bO^\infty[\mu] \\
\downarrow\sim &&\downarrow\sim\\
\widetilde\bO^\infty[\lambda]&\overset{\widetilde T^\lambda_\mu}\longrightarrow & \widetilde\bO^\infty[\mu]
\end{matrix}
$$
where the vertical maps are just restriction of functors. 

 We now list these additional properties of $T^\lambda_\mu$.
\begin{prop}\label{prop6.3}Assume $\lambda,\mu\in \sC^-$ satisfy the properties $\Phi_0(\lambda)\subseteq\Phi_0(\mu)$
and $\mu-\lambda\in WP^+$ of Proposition \ref{KTprop}. Then
the following statements hold.

(a) $T^\lambda_\mu M^*(w\cdot\lambda)\cong M^*(w\cdot\mu)$ and $\wT^\lambda_\mu\widetilde M^*(w\cdot
\lambda)\cong \widetilde M^*(w\cdot\mu)$.

(b) If $\Phi_0(\lambda)=\Phi_0(\mu)$, then $T^\lambda_\mu$ and $\wT^\lambda_\mu$ 
give equivalences of categories 
$$\begin{cases}T^\lambda_\mu:\bO^{\infty}[\lambda]\overset\sim\to \bO^{\infty}[\mu]\\
\widetilde T^\lambda_\mu:\widetilde\bO^{\infty}[\lambda]\overset\sim\to\widetilde\bO^{\infty}[\mu].\end{cases}$$

(c) Again assume that $\Phi_0(\lambda)=\Phi_0(\mu)$. If $\Gamma\subseteq[\lambda]$, 
set $\Gamma'=\{w\cdot\mu\,|\,w\cdot\lambda\in\Gamma\}$.
Then $\Gamma$ is a poset ideal in $([\lambda],\leq^W)$ if and only if $\Gamma'$ is a poset ideal in 
$([\mu],\leq^W)$. In this case,  the posets $\Gamma$ and  $\Gamma'$ are isomorphic by the evident
map $w\cdot\lambda\mapsto w\cdot\mu$, and the functors $T^\lambda_\mu$
and $\wT^{\lambda}_\mu$ induce (by restriction) category equivalences 
$$\begin{cases}T^\lambda_\mu:\bO^{\Gamma}[\lambda]\overset\sim\to \bO^{\Gamma'}[\mu]\\
\wT^\lambda_\mu:\widetilde\bO^{\Gamma}[\lambda]\overset\sim\to\widetilde\bO^{\Gamma'}[\mu].
\end{cases}$$  
 \end{prop}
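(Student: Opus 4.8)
The plan is to establish the three parts in order, bootstrapping from Proposition \ref{KTprop} and the exactness and duality properties already in hand. For (a), I would exploit the contravariant duality $\star$ on $\bO$. Since $T^\lambda_\mu$ is defined by tensoring with a finite-dimensional module $L(\gamma)$ ($\gamma\in P^+\cap W(\mu-\lambda)$) and then projecting onto the block $\bO[\mu]$, and since $L(\gamma)^\star\cong L(\gamma^\star)$ is again finite dimensional (with $L(\gamma)\otimes M)^\star\cong L(\gamma)^\star\otimes M^\star$ after identifying the relevant weights), one checks that $T^\lambda_\mu$ commutes with $\star$ up to replacing $\gamma$ by its dual. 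Concretely, $T^\lambda_\mu(M^\star)\cong (T^\lambda_\mu M)^\star$ for $M\in\bO^\infty[\lambda]$. Applying this to $M=M(w\cdot\lambda)$ and invoking Proposition \ref{KTprop}(a), together with the definition $M^\star(w\cdot\lambda)=M(w\cdot\lambda)^\star$, yields $T^\lambda_\mu M^\star(w\cdot\lambda)\cong M^\star(w\cdot\mu)$. The $\widetilde{\phantom{x}}$-version follows by applying the restriction functor, using the commutative diagram displayed just before the proposition and the fact (from \S3) that restriction to $\wg$ commutes with $\star$ on weight spaces.

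For (b), the hypothesis $\Phi_0(\lambda)=\Phi_0(\mu)$ makes the condition in Proposition \ref{KTprop}(b) automatic: $\Phi^+_0(\mu)\backslash\Phi^+_0(\lambda)=\emptyset$, so $T^\lambda_\mu L(w\cdot\lambda)\cong L(w\cdot\mu)$ for \emph{all} $w\in W(\lambda)$, and moreover $W(\lambda)=W(\mu)$ so the bijection $w\cdot\lambda\mapsto w\cdot\mu$ identifies $[\lambda]$ with $[\mu]$. One then constructs the inverse functor $T^\mu_\lambda$ (note $\lambda-\mu\in WP^+$ as well, since $\mu-\lambda\in WP^+$ and both lie in $\sC^-$ with the same $W_0$), and checks $T^\mu_\lambda T^\lambda_\mu\cong\operatorname{Id}$ and $T^\lambda_\mu T^\mu_\lambda\cong\operatorname{Id}$. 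The standard way to see this is: both composites are exact, send $M(w\cdot\lambda)\mapsto M(w\cdot\lambda)$ and $L(w\cdot\lambda)\mapsto L(w\cdot\lambda)$ by parts (a) above and Proposition \ref{KTprop}; an exact endofunctor of a highest weight category (Theorem \ref{First theorem of section 5}) fixing all standard and all irreducible objects, and commuting with the duality, is naturally isomorphic to the identity — this is a routine argument using projective covers in each truncated subcategory $\bO[\Gamma]$ and passing to the union $\bO^\infty[\lambda]$. The $\widetilde{\phantom{x}}$-statement is then immediate from the commutative diagram.

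For (c), I would first verify the poset-ideal claim: by the implications \eqref{implications}, $\leq^W$ refines $\leq_\nat$; since $T^\lambda_\mu$ is an equivalence carrying $\Delta(w\cdot\lambda)=M(w\cdot\lambda)$ to $\Delta(w\cdot\mu)=M(w\cdot\mu)$ and $L(w\cdot\lambda)$ to $L(w\cdot\mu)$, it preserves composition-factor multiplicities $[\Delta(\nu):L(\mu)]$, hence the $\leq_\nat$ order transports under $w\cdot\lambda\mapsto w\cdot\mu$; and by Remark \ref{natural order remark}(a) (the orders $\leq^W$ and $\uparrow$ agree on $[\lambda]$ via Proposition \ref{First prop of appendix}) the same bijection is an isomorphism of $\leq^W$-posets. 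Thus $\Gamma\trianglelefteq([\lambda],\leq^W)$ iff $\Gamma'\trianglelefteq([\mu],\leq^W)$. Since $\bO^\Gamma[\lambda]$ and $\bO^{\Gamma'}[\mu]$ are, by Theorem \ref{First theorem of section 5} and Remark \ref{natural order remark}(a), the Serre subcategories of $\bO^\infty[\lambda]$, $\bO^\infty[\mu]$ generated by the $L(\gamma)$ with $\gamma$ in the respective ideals, and since the equivalence $T^\lambda_\mu$ of (b) matches these irreducibles, it restricts to an equivalence $\bO^\Gamma[\lambda]\overset\sim\to\bO^{\Gamma'}[\mu]$; passing to $\wg$-modules via the commutative diagram gives the $\wT^\lambda_\mu$ version.

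The main obstacle is the rigidity argument in (b) — showing that an exact, duality-compatible endofunctor fixing all standard and all simple objects of the (non-finite, only ind-complete) highest weight category $\bO^\infty[\lambda]$ is isomorphic to the identity. One must be careful to work inside each finite truncation $\bO[\Gamma]$, where enough projectives exist (Theorem \ref{First theorem of section 5}), build the natural isomorphism there using that projectives have Verma flags and that the functor is determined on a progenerator, and then check these isomorphisms are compatible under the transition maps $\bO[\Gamma]\hookrightarrow\bO[\Gamma']$ so as to glue to all of $\bO^\infty[\lambda]$. Everything else is bookkeeping with the block-projection definition of translation and the already-established equivalences $\widetilde\bO^\infty[\lambda]\cong\bO^\infty[\lambda]$ of Proposition \ref{ideals}.
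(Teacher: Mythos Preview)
Your argument for (a) is essentially the paper's: $T^\lambda_\mu$ commutes with the duality $\star$ (since $L(\gamma)^\star\cong L(\gamma)$ and duality preserves blocks), so Proposition \ref{KTprop}(a) dualizes.

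The real divergence is in (b) and (c). You prove (b) first and deduce (c) by restricting to Serre subcategories; the paper does the opposite, and this reordering is what lets it sidestep exactly the obstacle you identify. For (c) the paper invokes the comparison theorem \cite[Thm.~5.8]{PS-1}: an exact functor between highest weight categories that sends each standard to the corresponding standard and each costandard to the corresponding costandard is automatically an equivalence. Since $T^\lambda_\mu$ does this on $\bO^\Gamma[\lambda]\to\bO^{\Gamma'}[\mu]$ by Proposition \ref{KTprop}(a) and part (a), the equivalence in (c) is immediate --- no inverse functor needs to be built, and no rigidity argument is needed. Part (b) then follows by writing $[\lambda]$ as a directed union of finite ideals $\Gamma$.

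Your proposed rigidity step --- that an exact, duality-compatible endofunctor of $\bO^\infty[\lambda]$ fixing each standard and each simple is naturally isomorphic to the identity --- is not routine in the way you suggest; one typically needs an adjunction between $T^\lambda_\mu$ and $T^\mu_\lambda$ to produce the unit/counit and then check these are isomorphisms. This can be made to work, but it is extra machinery the comparison theorem avoids entirely. Also, for the poset-ideal claim in (c) you take a detour through $\leq_\nat$ and Proposition \ref{First prop of appendix}; the paper just cites Remark \ref{minimal length}, since when $\Phi_0(\lambda)=\Phi_0(\mu)$ the minimal-length representatives $w_{\lambda'}$ and $w_{\mu'}$ coincide, so the bijection $w\cdot\lambda\mapsto w\cdot\mu$ is visibly an isomorphism for $\leq^W$.
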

 \begin{proof} We first prove (a). Because $L(\mu)^*\cong L(\mu)$, the duality on $\bO[\mu]$ preserves
 blocks in the category $\bO[\lambda]$. It follows thus follows by construction that the translation
 functor $T^\lambda_\mu$ commutes with duality. Now apply Theorem \ref{KTprop}(a).
 
 Next, we consider (c). First,  the assertions concerning $\Gamma$ and $\Gamma'$ follow from Remark \ref{minimal length}.
 Because the exact functor $T^\lambda_\mu$ takes standard (resp., costandard) modules $\Delta(w\cdot
 \lambda)$ (resp. $\nabla(w\cdot\lambda)$), $w\cdot\lambda\in \Gamma$, in
 $\bO[\lambda]$ 
 to standard (resp., costandard) modules $\Delta(w\cdot\mu)$  (resp., $\nabla(w\cdot\mu)$) in $\bO[\mu]$, the comparison theorem \cite[Thm. 5.8]{PS-1} implies it is an equivalence of categories. A similar argument
 applies to $\wT^\lambda_\mu$. This proves (c). 
 
 Finally, (b) follows from (c), writing $[\lambda]$ as a (directed) union of finite ideals $\Gamma$.
  \end{proof}

\begin{lem}\label{important lemma} Let $\lambda,\mu\in\sC^-_\rat$ satisfying the conditions $\mu-\lambda\in WP^+$ and $\Phi_0(\lambda)\subseteq\Phi_0(\mu)$. Let $w\in W(\lambda)=W(\mu)$.

(a) If $w\cdot\mu\in[\mu]^+$, then $w\cdot\lambda\in[\lambda]^+$. Also, when $w=w_{\mu'}$, for
 $\mu'\in[\mu]^+$, then $w=w_{\lambda'}$ for $\lambda'=w\cdot\mu$.

(b)   Assume $\lambda':=w\cdot \lambda\in[\lambda]^+$.  Put $\mu'=w\cdot\mu$. If $w_{\lambda'}=
w_{\mu'}$, then $\mu'\in[\mu]^+$.  \end{lem}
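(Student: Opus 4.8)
The plan is to deduce Lemma \ref{important lemma} from the explicit description of translation on irreducible modules in Proposition \ref{KTprop}(b), combined with the combinatorial reformulation in Remark \ref{minimal length}. Recall that $\lambda\in[\lambda]^+$ precisely when every composition factor $L(\nu)$ of... no, more directly: $w\cdot\lambda\in[\lambda]^+$ means $\langle w\cdot\lambda,\alpha_i^\vee\rangle\in\mathbb N$ for $i=1,\dots,r$, equivalently (by Proposition \ref{restrictionweights}-style considerations, or by the parabolic category setup of \S4--5) that $L(w\cdot\lambda)$ lies in $\bO^+$, i.e. is a composition factor of some generalized Verma module $M(\cdot)^+$. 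The cleanest criterion, however, is the following: $T^\lambda_\mu L(w\cdot\lambda)=L(w\cdot\mu)$ (nonzero) if and only if $w_{\lambda'}=w_{\mu'}$ where $\lambda'=w\cdot\lambda$, $\mu'=w\cdot\mu$ (this is exactly Remark \ref{minimal length}).

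For part (a): suppose $\mu':=w\cdot\mu\in[\mu]^+$. I would argue that $L(w\cdot\mu)\in\bO^+$, and since $T^\mu_\lambda$ (translation back) is exact and sends $M(\mu)^+$-type modules appropriately — or more simply, since $[\mu]^+\to[\lambda]$ via $w\cdot\mu\mapsto w\cdot\lambda$ must land in $[\lambda]^+$ by a weight comparison — one gets $w\cdot\lambda\in[\lambda]^+$. The genuinely clean route: the map $W(\lambda)\cdot\lambda\supseteq[\lambda]^+$ is characterized order-theoretically, but I think the intended argument uses Proposition \ref{prop6.3}(c) applied to the ideal $\Gamma=[\lambda]^+$ (or rather, one shows $[\mu]^+$ corresponds under the bijection $w\cdot\lambda\leftrightarrow w\cdot\mu$ to a subset of $[\lambda]^+$) — but $\Phi_0(\lambda)\subseteq\Phi_0(\mu)$ is only an inclusion, not equality, so Proposition \ref{prop6.3}(c) does not directly apply. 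So instead I would use Proposition \ref{KTprop}(b) directly: if $w\cdot\mu\in[\mu]^+$ we may write $w\cdot\mu=x\cdot\mu$ with $x=w_{\mu'}$, and $T^\lambda_\mu L(x\cdot\lambda)$ is either $L(x\cdot\mu)$ or $0$; the point is to show the weight condition $\langle x\cdot\lambda,\alpha_i^\vee\rangle\in\mathbb N$ follows from $\langle x\cdot\mu,\alpha_i^\vee\rangle\in\mathbb N$, which should be a direct computation since $\mu-\lambda$ and the $x$-action are controlled — when $\mu$ is "more singular than needed," the finite-part dominance only gets better, not worse. Then the second sentence of (a), that $x=w_{\mu'}\Rightarrow x=w_{\lambda'}$, is precisely the content of Remark \ref{minimal length} combined with the fact that passing to a more singular $\mu$ (with $\Phi_0(\lambda)\subseteq\Phi_0(\mu)$) can only shrink the minimal-length representative — or rather is handled by the observation that $w_{\lambda'}\le w_{\mu'}$ always and a length argument forces equality.

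For part (b): now assume $\lambda'=w\cdot\lambda\in[\lambda]^+$ and $w_{\lambda'}=w_{\mu'}$. By Remark \ref{minimal length}, the equality $w_{\lambda'}=w_{\mu'}$ is exactly the condition $w(\Phi^+_0(\mu)\setminus\Phi^+_0(\lambda))\subseteq\Phi^+(\lambda)$, so Proposition \ref{KTprop}(b) gives $T^\lambda_\mu L(w\cdot\lambda)=L(w\cdot\mu)$. Since $L(w\cdot\lambda)\in\bO^+[\lambda]$ (because $w\cdot\lambda\in[\lambda]^+$) and the translation functor $T^\lambda_\mu$ sends $\bO^+$ into $\bO^+$ — this should follow from the "block projection after tensoring with $L(\gamma)$, $\gamma\in P^+$" construction together with the fact (used repeatedly in \S4) that $L(\gamma)$ for $\gamma$ finite-dominant tensored with a $\bO^+$-module stays in $\bO^+$, since $\bO^+$ is closed under tensoring by finite-dimensional-over-$\og$ modules — we conclude $L(w\cdot\mu)\in\bO^+$, i.e. $\mu'\in[\mu]^+$.

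The main obstacle I anticipate is handling the inequality $\Phi_0(\lambda)\subseteq\Phi_0(\mu)$ rather than equality: Proposition \ref{prop6.3}'s equivalences are unavailable, so I must work at the level of individual weights/irreducibles via Proposition \ref{KTprop}(b) and carefully track how the minimal-length coset representatives $w_{\lambda'}$, $w_{\mu'}$ compare when one passes from the less-singular parameter $\lambda$ to the more-singular $\mu$. In particular, the claim in (a) that $x=w_{\mu'}$ forces $x=w_{\lambda'}$ requires knowing $x$ is already of minimal length in $xW_0(\lambda)$, which needs $W_0(\lambda)\subseteq W_0(\mu)$ and a standard Coxeter-group fact about minimal coset representatives for nested parabolic (here pointwise-stabilizer) subgroups; verifying this cleanly, and confirming that the finite-dominance weight inequality genuinely goes the "easy" direction, is where the real care is needed.
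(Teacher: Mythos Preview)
Your handling of the second sentence of (a) is correct and is exactly the paper's argument: $w=w_{\mu'}$ means $w$ is shortest in $wW_0(\mu)$, hence shortest in the smaller coset $wW_0(\lambda)\subseteq wW_0(\mu)$, so $w=w_{\lambda'}$.

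For the \emph{first} sentence of (a) there is a genuine gap. You never carry out the ``direct computation,'' and the heuristic you offer (``more singular $\Rightarrow$ finite-part dominance only gets better'') is not enough---both hypotheses $\lambda,\mu\in\sC^-$ and the inclusion $\Phi_0(\lambda)\subseteq\Phi_0(\mu)$ are needed in a specific way. The paper's argument is: for $\alpha\in\oPi$, $\langle w\cdot\mu,\alpha^\vee\rangle\geq 0$ gives $\langle\mu+\rho,w^{-1}(\alpha)^\vee\rangle\geq 1$, forcing $w^{-1}(\alpha)<0$ since $\mu\in\sC^-$. If $\langle w\cdot\lambda,\alpha^\vee\rangle<0$ then $\langle\lambda+\rho,w^{-1}(\alpha)^\vee\rangle\leq 0$; strict inequality would force $w^{-1}(\alpha)>0$ (using $\lambda\in\sC^-$), a contradiction, while equality gives $w^{-1}(\alpha)\in\Phi_0(\lambda)\subseteq\Phi_0(\mu)$, contradicting $\langle\mu+\rho,w^{-1}(\alpha)^\vee\rangle\geq 1$. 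Your plan gestures at this but does not supply it.

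For (b) your approach is different from the paper's and it works. The paper gives another direct weight calculation in the style above. You instead invoke Remark~\ref{minimal length} to get $T^\lambda_\mu L(w\cdot\lambda)=L(w\cdot\mu)$ from $w_{\lambda'}=w_{\mu'}$, and then use that $T^\lambda_\mu$ preserves $\bO^+$ to conclude $\mu'\in[\mu]^+$. Your justification for the preservation---$\bO^+$ is exactly the $\og$-locally-finite part of $\bO$, which is closed under tensoring with the integrable module $L(\gamma)$ and under block projection---is correct and is independent of the lemma. Note that the paper runs the logic the other way: it proves Lemma~\ref{important lemma}(b) by hand and then \emph{deduces} that $T^\lambda_\mu$ preserves $\bO^+$ in Proposition~\ref{important prop}. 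Your ordering is legitimate and arguably cleaner, but be aware that you are thereby proving Proposition~\ref{important prop} along the way, so you should state that preservation fact as a standalone observation before using it.
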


 \begin{proof} To prove (a), it must be shown first that if $\alpha\in\oPi$, then $\langle w\cdot\lambda,\alpha^\vee\rangle\geq 0$. By hypothesis, $0\leq\langle w\cdot\mu,\alpha^\vee\rangle= \langle w(\mu+\rho),\alpha^\vee)-1$ for any simple root $\alpha\in\oPi$. Hence, $(\mu+\rho,w^{-1}(\alpha)^\vee\rangle\geq 1$. 
 Since $\mu\in\sC^-$, this forces $w^{-1}(\alpha)<0$. Now suppose that $\langle w\cdot\lambda,\alpha^\vee\rangle <0$, for some $\alpha\oPi\subseteq\Phi_0(\lambda)$. Then, arguing as before, $\langle \lambda+\rho,w^{-1}(\alpha)^\vee\rangle \leq 0$. If $\langle \lambda+\rho,w^{-1}(\alpha)^\vee\rangle<0$, then, since $\lambda\in\sC^-$, $w^{-1}(\alpha)>0$,
 which is impossible. Thus, $w^{-1}(\alpha)\in\Phi_0(\lambda)\subseteq\Phi_0(\mu)$, again impossible.
 
 Next, for the last assertion of (a), notice that $w=w_{\mu'}$ means that $w_{\mu'}$ is of minimal length
 in its coset $wW_0(\mu)$, so it is certainly of minimal length in $wW_0(\lambda)\subseteq wW_0(\mu)$,
 giving $w=w_{\lambda'}$. Now (a) is completely established.
 
 Now we prove (b). We can assume that $w=w_{\mu'}=w_{\lambda'}$. Assume, for some $\alpha\in\oPi$,
 $\langle w\cdot\mu,\alpha^\vee\rangle<0$. In other words, $\langle w(\mu+\rho),\alpha^\vee\rangle<1$,
 i.~e., $\langle\mu+\rho, w^{-1}(\alpha)^\vee\rangle \leq 0$.  
 
 But $\langle w\cdot\lambda,\alpha^\vee\rangle\geq 0$, so that $\langle \lambda+\rho,w^{-1}(\alpha)^\vee)\geq 1$. Since $\lambda\in \sC^-$, this means that $w^{-1}(\alpha)<0$, so that $l(s_\alpha w)<l(w)$. Thus,
 $s_\alpha w\cdot\mu\not=w\cdot\mu$.   
Since $\mu\in\sC^-$, we have $\langle \mu+\rho,w^{-1}(\alpha)^\vee\rangle \geq 0$. 

Combining the two previous paragraphs, $\langle\mu+\rho,w^{-1}(\alpha)^\vee\rangle=0$. However, 
this means that $s_\alpha w(\mu+\rho)=\mu+\rho$, or equivalently $s_\alpha w\cdot w\cdot\mu$, a
contradiction. 
  \end{proof}  

\begin{prop}\label{important prop} Let $\lambda,\mu\in \sC^-_{\text{\rm rat}}$ satisfy  $\mu-\lambda\in WP^+$
and $\Phi_0(\lambda)\subseteq \Phi_0(\mu)$. Then the functor $T^\lambda_\mu$ takes objects in 
$\bO^+[\lambda]$ to objects in $\bO^+[\mu]$. Also, $\wT^\lambda_\mu$ maps objects in $\widetilde\bO^+[\lambda]$ to objects in $\widetilde\bO^+[\mu]$. If $\tau\in[\mu]^+$, then $L(w_{\tau}\cdot\lambda)$
is the unique irreducible module $L(\nu)\in\bO^+[\lambda]$ for which $T^\lambda_\mu L(\nu)=L(\tau)$. A similar statement
holds for $\widetilde L(\tau)$ and $\widetilde \bO^+[\lambda]$
\end{prop}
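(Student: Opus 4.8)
The plan is to reduce everything to Proposition~\ref{KTprop}(b) together with Lemma~\ref{important lemma}. First I would verify that $T^\lambda_\mu$ sends $\bO^+[\lambda]$ into $\bO^+[\mu]$: since $T^\lambda_\mu$ is exact, it suffices to check this on irreducibles, i.e. to show that if $\nu\in[\lambda]^+$ then every composition factor of $T^\lambda_\mu L(\nu)$ lies in $[\mu]^+$. By Proposition~\ref{KTprop}(b), $T^\lambda_\mu L(\nu)$ is either $0$ or $L(w\cdot\mu)$ where $\nu=w\cdot\lambda$ and $w$ satisfies the condition $w(\Phi_0^+(\mu)\setminus\Phi_0^+(\lambda))\subseteq\Phi^+(\lambda)$; by Remark~\ref{minimal length} this condition says exactly that $w=w_\nu=w_{w\cdot\mu}$, and then Lemma~\ref{important lemma}(b) gives $w\cdot\mu\in[\mu]^+$. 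So the nonzero image is already irreducible and sits in $\bO^+[\mu]$, proving the first assertion. The statement for $\wT^\lambda_\mu$ follows by transporting along the vertical equivalences in the commutative diagram preceding Proposition~\ref{prop6.3}, since those equivalences identify $\widetilde\bO^+[\lambda]$ with (the image of) $\bO^+[\lambda]$.

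Next I would pin down the preimage of a given $L(\tau)$, $\tau\in[\mu]^+$. Write $\tau=w_\tau\cdot\mu$ with $w_\tau\in W(\mu)=W(\lambda)$ of minimal length. By Lemma~\ref{important lemma}(a), $w_\tau\cdot\lambda\in[\lambda]^+$ and $w_\tau=w_{w_\tau\cdot\lambda}$, so $\nu:=w_\tau\cdot\lambda$ is a legitimate candidate; and since $w_\tau=w_\nu=w_\tau=w_{\tau}$, the condition of Proposition~\ref{KTprop}(b) holds, giving $T^\lambda_\mu L(\nu)=L(w_\tau\cdot\mu)=L(\tau)$. This establishes existence. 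For uniqueness, suppose $L(\nu')\in\bO^+[\lambda]$ also satisfies $T^\lambda_\mu L(\nu')=L(\tau)$. Then $\nu'\in[\lambda]^+$, write $\nu'=w'\cdot\lambda$ with $w'=w_{\nu'}$ minimal. By Proposition~\ref{KTprop}(b) the image is nonzero, so $w'(\Phi_0^+(\mu)\setminus\Phi_0^+(\lambda))\subseteq\Phi^+(\lambda)$, and (Remark~\ref{minimal length}) $w'=w_{\nu'}=w_{w'\cdot\mu}$; moreover $T^\lambda_\mu L(\nu')=L(w'\cdot\mu)$, which equals $L(\tau)$ forces $w'\cdot\mu=\tau=w_\tau\cdot\mu$. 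Hence $w'$ and $w_\tau$ lie in the same coset of $W_0(\mu)$, and since $w'=w_{w'\cdot\mu}$ is the minimal-length representative of that coset, $w'=w_\tau$, whence $\nu'=w'\cdot\lambda=w_\tau\cdot\lambda=\nu$.

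Finally, the tilde version is immediate: applying the restriction functor $\fg\mbox{--mod}\to\wg\mbox{--mod}$, which by Proposition~\ref{ideals} furnishes the equivalences $\bO^+[\lambda]\cong\widetilde\bO^+[\lambda]$ and $\bO^+[\mu]\cong\widetilde\bO^+[\mu]$ intertwining $T^\lambda_\mu$ with $\wT^\lambda_\mu$ and $L(\cdot)$ with $\wL(\cdot)$, transports the existence-and-uniqueness statement verbatim: $\wL(w_\tau\cdot\lambda)$ is the unique irreducible in $\widetilde\bO^+[\lambda]$ mapped by $\wT^\lambda_\mu$ to $\wL(\tau)$.

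I expect the only real subtlety to be bookkeeping: carefully distinguishing $w\cdot\lambda$ from $w\cdot\mu$, keeping track of which Weyl group ($W(\lambda)=W(\mu)$) and which stabilizer ($W_0(\lambda)\subseteq W_0(\mu)$) is in play, and making sure that ``$w$ is minimal in its $W_0(\mu)$-coset'' is used in the uniqueness step rather than the weaker minimality in its $W_0(\lambda)$-coset. Once the translation of the conditions of Proposition~\ref{KTprop}(b) via Remark~\ref{minimal length} is handled cleanly, everything else is a direct application of Lemma~\ref{important lemma}.
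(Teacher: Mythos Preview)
Your proof is correct and follows essentially the same route as the paper's: reduce the first assertion to irreducibles by exactness, use Proposition~\ref{KTprop}(b) together with Remark~\ref{minimal length} and Lemma~\ref{important lemma}(b) to see that any nonzero image lies in $\bO^+[\mu]$, and then appeal to Lemma~\ref{important lemma}(a) with Remark~\ref{minimal length} for the existence/uniqueness of the preimage of $L(\tau)$. The paper's own proof is terser---it simply says the final assertion ``follows from Lemma~\ref{important lemma}(a) and Remark~\ref{minimal length}''---whereas you unpack the uniqueness argument explicitly via the minimal-length coset representative in $W_0(\mu)$; but the underlying argument is identical.
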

\begin{proof}We wish to show that $L':=T^\lambda_\mu L(\lambda')\in\bO^+[\mu]$. If $L'\not=0$, then
it is $L(\mu')$ where $\mu'=w\cdot\mu$ and $w_{\lambda'}=w_{\mu'}$ by Remark \ref{minimal length}.
Thus, $\mu'\in[\mu]^+$ by Lemma \ref{important lemma}(b), as desired. This proves the first 
assertion of the proposition. 
The second assertion concerning $\wT^\lambda_\mu$ is an easy consequence.

  The final assertion follows from Lemma \ref{important lemma}(a) and Remark \ref{minimal length}. 
\end{proof}
 
 We also obtain
 \begin{lem}\label{lemma6.6} Let $\lambda,\mu\in \sC^-_{\text{\rm rat}}$ be satisfy $\mu-\lambda\in WP^+$
and $\Phi_0(\lambda)\subseteq \Phi_0(\mu)$. For $w\in W(\lambda)$ with $w\cdot\lambda\in[\lambda]^+$,  
 $$\begin{cases}T^\lambda_\mu M(w\cdot\lambda)^+\cong M(w\cdot\mu)^+\\
 \widetilde T^\lambda_\mu\wM(w\cdot\lambda)^+\cong\wM(w\cdot\mu)^+.\end{cases}$$
 (If $w\cdot\mu\not\in[\lambda]^+$, then $M(w\cdot\mu)^+=0$ and $\widetilde M(w\cdot\mu)^+=0$.
 \end{lem}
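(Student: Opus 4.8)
The plan is to deduce Lemma \ref{lemma6.6} from the already-established behavior of $T^\lambda_\mu$ on Verma modules (Proposition \ref{KTprop}(a)), the exactness of $T^\lambda_\mu$, and the description of the parabolic standard modules $M(\nu)^+$ as the largest quotient of $M(\nu)$ lying in $\bO^+$ (Proposition following \eqref{exact}). First I would recall that Proposition \ref{important prop} tells us $T^\lambda_\mu$ carries $\bO^+[\lambda]$ into $\bO^+[\mu]$, and that $M(w\cdot\lambda)^+$ is the universal quotient of $M(w\cdot\lambda)$ in $\bO^+$. Applying the exact functor $T^\lambda_\mu$ to the surjection $M(w\cdot\lambda)\twoheadrightarrow M(w\cdot\lambda)^+$ and using $T^\lambda_\mu M(w\cdot\lambda)\cong M(w\cdot\mu)$ yields a surjection $M(w\cdot\mu)\twoheadrightarrow T^\lambda_\mu\bigl(M(w\cdot\lambda)^+\bigr)$ whose target lies in $\bO^+[\mu]$; by the universal property of $M(w\cdot\mu)^+$ this factors through $M(w\cdot\mu)^+$, giving a surjection $M(w\cdot\mu)^+\twoheadrightarrow T^\lambda_\mu\bigl(M(w\cdot\lambda)^+\bigr)$.

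For the reverse comparison I would exploit duality. By the Proposition following \eqref{exact}, $\bigl(M(w\cdot\lambda)^+\bigr)^\star\cong M^\star(w\cdot\lambda)_+$, and by Proposition \ref{prop6.3}(a) together with the fact (from the proof of \ref{prop6.3}(a)) that $T^\lambda_\mu$ commutes with the duality $\star$, the same argument applied to costandard modules gives an injection $T^\lambda_\mu\bigl(M^\star(w\cdot\lambda)_+\bigr)\hookrightarrow M^\star(w\cdot\mu)_+$. Dualizing this back produces a surjection in the other direction, $T^\lambda_\mu\bigl(M(w\cdot\lambda)^+\bigr)\twoheadrightarrow M(w\cdot\mu)^+$. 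A surjection in each direction between modules of finite length (all objects of $\bO[\mu]$ have finite length) forces an isomorphism, provided the two modules have the same (finite) composition length — which follows since $T^\lambda_\mu$ is exact and, on composition factors, by Proposition \ref{KTprop}(b) and Proposition \ref{important prop}, sends $L(\nu)$ with $\nu\in[\lambda]^+$ either to $0$ or bijectively (via $w_{\lambda'}=w_{\mu'}$) onto the corresponding $L(\nu')\in[\mu]^+$; comparing formal characters, the multiplicity count matches that of $M(w\cdot\mu)^+$. The $\wg$-version then follows immediately by restriction, since $\wT^\lambda_\mu$ is defined through the equivalences $\widetilde\bO^\infty[\lambda]\cong\bO^\infty[\lambda]$ and all the modules in question are restrictions of their $\g$-counterparts.

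The parenthetical degenerate case is handled separately and easily: if $w\cdot\mu\notin[\mu]^+$, then by the Proposition following \eqref{exact} (applied with $\mu$ there equal to $w\cdot\mu$, noting $(w\cdot\mu)|_{\oh}$ is then not dominant) the $\mathfrak s$-module $\mathfrak s(w\cdot\mu)$ is not a module in $\bO^+$-territory, i.e. $M(w\cdot\mu)$ has no nonzero quotient in $\bO^+$, so $M(w\cdot\mu)^+=0$; alternatively this drops out of the multiplicity computation above, since the only candidate highest weight already fails dominance.

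The main obstacle I anticipate is the bookkeeping in the composition-multiplicity comparison — making precise that applying $T^\lambda_\mu$ to a composition series of $M(w\cdot\lambda)^+$ and discarding the factors killed by $T^\lambda_\mu$ produces exactly a composition series of $M(w\cdot\mu)^+$, rather than something larger or smaller. This requires knowing not merely that $T^\lambda_\mu$ kills $L(\nu)$ for $\nu$ with $w_\nu\ne w_{\nu'}$, but that every constituent $L(\nu)$ of $M(w\cdot\lambda)^+$ has $\nu\in[\lambda]^+$ (so Lemma \ref{important lemma} and Remark \ref{minimal length} apply cleanly), and that the surviving constituents match those of $M(w\cdot\mu)^+$ with the right multiplicities. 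The cleanest route is to avoid multiplicity-counting altogether and instead observe that the composite of the two surjections $M(w\cdot\mu)^+\twoheadrightarrow T^\lambda_\mu(M(w\cdot\lambda)^+)\twoheadrightarrow M(w\cdot\mu)^+$ is, by naturality, an endomorphism of the highest weight module $M(w\cdot\mu)^+$ that is nonzero on the highest weight line, hence (since $\End M(w\cdot\mu)^+$ is local with residue field $\mathbb C$) an isomorphism; this forces both surjections to be isomorphisms and sidesteps the delicate enumeration.
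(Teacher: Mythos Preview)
Your first step is fine and matches the paper: applying $T^\lambda_\mu$ to the surjection $M(w\cdot\lambda)\twoheadrightarrow M(w\cdot\lambda)^+$ and invoking Propositions~\ref{KTprop}(a) and~\ref{important prop} yields a surjection $M(w\cdot\mu)^+\twoheadrightarrow T^\lambda_\mu\bigl(M(w\cdot\lambda)^+\bigr)$.

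The duality step, however, contains a direction error. Dualizing an injection $A\hookrightarrow B$ yields a surjection $B^\star\twoheadrightarrow A^\star$. With $A=T^\lambda_\mu\bigl(M^\star(w\cdot\lambda)_+\bigr)$ and $B=M^\star(w\cdot\mu)_+$, one has $B^\star\cong M(w\cdot\mu)^+$ and $A^\star\cong T^\lambda_\mu\bigl(M(w\cdot\lambda)^+\bigr)$ (using that $T^\lambda_\mu$ commutes with $\star$). So the dualized map is again a surjection
\[
M(w\cdot\mu)^+\;\twoheadrightarrow\; T^\lambda_\mu\bigl(M(w\cdot\lambda)^+\bigr),
\]
the \emph{same} direction as in step one, not the reverse. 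Consequently you do not have surjections in both directions, and the ``compose to obtain a nonzero endomorphism of $M(w\cdot\mu)^+$'' trick cannot be run: there is no map $T^\lambda_\mu\bigl(M(w\cdot\lambda)^+\bigr)\to M(w\cdot\mu)^+$ available from your argument.

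What remains, then, is exactly the ``bookkeeping'' you tried to sidestep: one must show the surjection $M(w\cdot\mu)^+\twoheadrightarrow T^\lambda_\mu\bigl(M(w\cdot\lambda)^+\bigr)$ is an isomorphism by comparing sizes. The paper does this by a character computation: since $M(\nu)^+\cong U(\g)\otimes_{U(\mathfrak p)}\mathfrak s(\nu)$, its formal character is governed (via the PBW basis for $U(\g)$ over $U(\mathfrak p)$) by the Weyl character of the finite-dimensional $\og$-module $V(\overset\circ{\nu})$. Using Proposition~\ref{KTprop}(a) one computes the character of $T^\lambda_\mu M(w\cdot\lambda)^+$ and finds it equals that of $M(w\cdot\mu)^+$; a surjection between modules of equal character is an isomorphism. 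Your multiplicity-counting sketch is essentially a Grothendieck-group version of this same comparison, and can be made to work, but the character argument is cleaner and is what the paper uses.
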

 
 \begin{proof}By Propositions \ref{KTprop} and \ref{important prop}, 
 $T^\lambda_\mu M(w\cdot\lambda)^+$
 is a quotient of $M(w\cdot\mu)$ and belongs to $\bO^+[\mu]$. Thus, $T^\lambda_\mu M(w\cdot\lambda)^+$
 is a quotient of $M(\mu)^+$.  We can compare their characters using Proposition \ref{KTprop} and Weyl's character formula (applied for
 $\og$ on the irreducible modules $V(\overset\circ {w\cdot\lambda})$, $V(\overset\circ {w\cdot\mu})$).
 This shows (after inducing ${\mathfrak s}(w\cdot\lambda)$, ${\mathfrak s}(w\cdot\mu)$ from $\mathfrak p$
 to $\g$) that $T^\lambda_\mu M(w\cdot\lambda)^+$ and $M(w\cdot\mu)^+$ have the same
 character. Hence, they are isomorphic. A similar argument
 applies to show $\wT^\lambda_\mu\wM(w\cdot\lambda)^+\cong\wM(w\cdot\mu)^+$,
 proving the lemma.  
 \end{proof}
 
 Now the following analogue of Proposition \ref{important prop} follows as in the proof of the latter.
 
 \begin{prop}\label{prop6.7} Let $\lambda,\mu\in \sC^-_{\text{\rm rat}}$ satisfy the conditions (of Proposition \ref{KTprop}) $\mu-\lambda\in WP^+$
and $\Phi_0(\lambda)\subseteq \Phi_0(\mu)$.  Then
the following statements hold.

(a) $T^\lambda_\mu M^*(w\cdot\lambda)_+\cong M^*(w\cdot\mu)_+$ and $\wT^\lambda_\mu \widetilde M^*(w\cdot
\lambda)_+\cong\widetilde M^*(w\cdot\mu)_+$.

Also, when $w=w_{\mu'}$, for $\mu'\in[\mu]^+$, then $w=w_{\lambda'}$ for $\lambda'=w\cdot\mu$.

(b) If $\Phi_0(\lambda)=\Phi_0(\mu)$, then $T^\lambda_\mu$ and $\wT^\lambda_\mu$ 
give equivalences of categories 
$$\begin{cases}T^\lambda_\mu:\bO^{+,\infty}[\lambda]\overset\sim\to \bO^{+,\infty}[\mu]\\
\widetilde T^\lambda_\mu:\widetilde\bO^{+,\infty}[\lambda]\overset\sim\to\widetilde\bO^{+,\infty}[\mu].\end{cases}$$

(c) Again assume that $\Phi_0(\lambda)=\Phi_0(\mu)$. If $\Gamma\subseteq[\lambda]^+$, 
set $\Gamma'=\{w\cdot\mu\,|\,w\cdot\lambda\in\Gamma\}$.
Then $\Gamma$ is a poset ideal in $([\lambda]^+,\leq^W)$ if and only if $\Gamma'$ is a poset ideal in 
$([\mu]^+,\leq^W)$. In this case,  the posets $\Gamma$ and  $\Gamma'$ are isomorphic by the evident
map $w\cdot\lambda\mapsto w\cdot\mu$, and the functors $T^\lambda_\mu$
and $\wT^{\lambda}_\mu$ induce (by restriction) category equivalences 
$$\begin{cases}T^\lambda_\mu:\bO^{\Gamma,+}[\lambda]\overset\sim\to \bO^{\Gamma',+}[\mu]\\
\wT^\lambda_\mu:\widetilde\bO^{\Gamma,+}[\lambda]\overset\sim\to\widetilde\bO^{\Gamma',+}[\mu].
\end{cases}$$  
 \end{prop}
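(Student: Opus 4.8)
The plan is to run this proof in close parallel with the proof of Proposition~\ref{prop6.3}, substituting the parabolic versions of all the relevant facts. First, for part~(a), I would observe that the duality $M\mapsto M^\star$ on $\bO[\mu]$ restricts to a duality on $\bO^+[\mu]$, since it fixes each $L(\nu)$ and, by definition, $\bO^+[\mu]$ is cut out by a condition on composition factors; hence it exchanges $\Delta$-filtered and $\nabla$-filtered objects, carrying $M(w\cdot\mu)^+$ to $(M(w\cdot\mu)^+)^\star\cong M^\star(w\cdot\mu)_+$ (the last isomorphism is the final assertion of Proposition~5.6, applied in the block $[\mu]$). Since $T^\lambda_\mu$ commutes with duality (this was shown in the proof of Proposition~\ref{prop6.3}(a)) and $T^\lambda_\mu M(w\cdot\lambda)^+\cong M(w\cdot\mu)^+$ by Lemma~\ref{lemma6.6}, dualizing gives $T^\lambda_\mu M^\star(w\cdot\lambda)_+\cong M^\star(w\cdot\mu)_+$. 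The tilde version follows by applying the restriction functor and using that $F_{k,a}$ intertwines the two sides (Proposition~\ref{ideals}). The second sentence of~(a) is just Lemma~\ref{important lemma}(a) restated.

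For part~(c), the combinatorial half is handled exactly as in Proposition~\ref{prop6.3}(c): by Remark~\ref{minimal length}, $w(\Phi^+_0(\mu)\setminus\Phi^+_0(\lambda))\subseteq\Phi^+(\lambda)$ is equivalent to $w_{\lambda'}=w_{\mu'}$, and since $\Phi_0(\lambda)=\Phi_0(\mu)$ here this condition is automatic, so $w\mapsto w\cdot\lambda$ and $w\mapsto w\cdot\mu$ identify $[\lambda]^+$ with $[\mu]^+$ as sets (using Lemma~\ref{important lemma}, which guarantees $w\cdot\lambda\in[\lambda]^+\iff w\cdot\mu\in[\mu]^+$ once $w$ is taken of minimal length). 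That this set bijection respects $\leq^W$, and hence sends $\leq^W$-ideals to $\leq^W$-ideals, is immediate because $\leq^W$ is defined purely on the Coxeter group $W(\lambda)=W(\mu)$ via the Bruhat--Chevalley order and is thus transported verbatim. Then, since $T^\lambda_\mu$ is exact and, by Lemma~\ref{lemma6.6} together with part~(a), sends the standard objects $\Delta(w\cdot\lambda)=M(w\cdot\lambda)^+$ and costandard objects $\nabla(w\cdot\lambda)=M^\star(w\cdot\lambda)_+$ of $\bO^{\Gamma,+}[\lambda]$ to the corresponding objects of $\bO^{\Gamma',+}[\mu]$, the comparison theorem \cite[Thm.~5.8]{PS-1} yields that the induced functor $\bO^{\Gamma,+}[\lambda]\to\bO^{\Gamma',+}[\mu]$ is an equivalence; the tilde version follows through the equivalences of Proposition~\ref{ideals}. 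Finally, part~(b) is deduced from~(c) by writing $[\lambda]^+$ as a directed union of finite $\leq^W$-ideals $\Gamma$ and passing to the colimit, exactly as in the deduction of Proposition~\ref{prop6.3}(b) from~(c).

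The step I expect to require the most care is verifying that $T^\lambda_\mu$ restricts to a functor $\bO^{+,\infty}[\lambda]\to\bO^{+,\infty}[\mu]$ on the level of the completed categories and that it genuinely sends standard/costandard objects to standard/costandard objects rather than merely to objects with the right character. The character computation is already in Lemma~\ref{lemma6.6}, but for the equivalence one needs $\Delta$- and $\nabla$-exactness in the parabolic setting; here I would invoke Proposition~\ref{important prop} (which shows $T^\lambda_\mu$ preserves $\bO^+$ and identifies the unique irreducible over each $L(\tau)$) to control composition factors, so that a surjection $M(w\cdot\mu)^+\twoheadrightarrow T^\lambda_\mu M(w\cdot\lambda)^+$ with equal characters is forced to be an isomorphism. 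Everything else is a routine transcription of the $\bO^\infty$ arguments, with $M(\nu)$ replaced by $M(\nu)^+$, $M^\star(\nu)$ by $M^\star(\nu)_+$, and $[\lambda]$ by $[\lambda]^+$.
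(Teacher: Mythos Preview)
Your proposal is correct and follows essentially the same approach as the paper, which simply remarks that the proposition ``follows as in the proof of the latter'' (i.e., Proposition~\ref{prop6.3}), leaving the parabolic substitutions implicit. You have spelled out precisely those substitutions---Lemma~\ref{lemma6.6} in place of Proposition~\ref{KTprop}(a), duality via Proposition~5.6, Lemma~\ref{important lemma} for the combinatorics, and the comparison theorem \cite[Thm.~5.8]{PS-1}---which is exactly what the paper intends.
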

 
\section{Quantum enveloping algebras and category equivalences} We continue to work with the
indecomposable root system $\oPhi$, and we let $\ell$ be a positive integer. Set $D=(\theta_l,\theta_l)/(\theta_s,\theta_s)\in\{1,2,3\}$. 
Let 
\begin{equation}\label{defnofe}
e:=\begin{cases} \ell,\quad {\text{\rm if}}\,\,\ell\,\,{\text{\rm is odd}};\\
                        \ell/2,\quad{\text{\rm if}}\,\,\ell\,\,{\text{\rm is even}}.\end{cases}\end{equation}                  
                          There is a natural dot
action of the affine Weyl group $W_e=\overset\circ W\ltimes e\overset\circ Q$ on the set of integer weights $\oP\subseteq \fhstar$, given by $w\cdot\omu=
w(\omu+\orho)-\orho$ for $w\in W_e$. The action without the ``dot" $\cdot$ is the usual action
of $\overset\circ W$, and is translation on $e\overset\circ Q$. The fundamental reflections $s_0,s_1,\cdots, s_r$ for
$W_\ell$ consists of the usual reflections $s_i$ associated to fundamental roots $\alpha_i\in\overset\circ\Pi$,
$i=1,\cdots, r$, together with the reflections $s_0$ in the affine hyperplane $\{x\in\ofhstar\,|\,\langle x+\rho,\theta_s^\vee\rangle
=-e\,\}$. 

The following proposition is an easy calculation, similar to those given in \cite[p. 269]{T}. The
first observations of this kind are likely those of \cite{Kumar}. We state it only for $(D,e)=1$. (In
particular, this condition holds when $\oPhi$ is simply laced.)   A somewhat similar result holds 
without the assumption $(D,e)=1$, though the group $W_e$ must be modified; see \cite[Lemma 6.3]{T}. 
    
\begin{prop}\label{prop7.1} Let $\ell$ be a positive integer and let $e$ be as in (\ref{defnofe}).
Let $\lambda\in \sC^-_{\text{\rm rat}}$ with $\lambda(c)=k$, and assume
that $-(k+g)=\ell/2D$ and that $(e,D)=1$ with $e$ as above. There is an isomorphism $\phi_\ell:W_e\overset\sim\to W(\lambda)$ sending $s_0,\cdots, s_n$
to the fundamental reflections defined by $2\delta-\theta_s,\alpha_1,\cdots,\alpha_r\in\Phi(\lambda)$ 
if $\ell$ is odd, and to the fundamental reflections defined by $\delta-\theta_s,\alpha_1,\cdots,\alpha_r$
if $\ell$ is even.  In both cases,
$$\begin{cases}\phi_\ell(s_i)\cdot(\omu +k\chi)=w\cdot \omu+ k\chi,\quad w\in W_e;\\
\phi_\ell(\ell\gamma)\cdot(\omu+k\chi)=\omu+e\gamma+k\chi\,\, \text{\rm mod}\,\, {\mathbb Z}\,\delta,\,\gamma\in\overset\circ Q.
\end{cases}$$
In particular, if $\mu\in\fh^*$ with $\mu(c)=k$, then
$$\phi_\ell(w)\cdot\mu=w\cdot\omu+k\chi\,\quad\mod\,{\mathbb C}\delta.$$
More generally, if $-(k+g)=e/m$ for some positive integers $e,m$ with $(m,e)=1$ and $D|m$, then there is an
isomorphism $\phi:W_e\overset\sim\to W(\lambda)$, where $\phi(s_0)=s_{m\delta-\theta_s}$
and, for $i=1,\cdots, r$,  $\phi(s_{\alpha_i})$ is equal to the fundamental reflection defined by
$\alpha_1,\cdots,\alpha_r$.  The roots $m\delta-\theta_s, \alpha_1,\cdots, \alpha_r$ are the standard
(positive) fundamental roots in $\Phi(\lambda)$. 
\end{prop}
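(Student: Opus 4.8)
The statement is essentially an explicit identification of the integral root subsystem $\Phi(\lambda)$ and its group $W(\lambda)$ attached to a level-$k$ weight $\lambda$, under the hypothesis $-(k+g)=e/m$. The strategy is a direct computation, following the template of Tanisaki \cite[p.~269]{T} and Kumar \cite{Kumar}. First I would fix $\lambda\in\sC^-_\rat$ with $\lambda(c)=k$, write $\rho=\orho+g\chi$, and note $\lambda+\rho$ has level $k+g$. For a real root $j\delta+\alpha$ (with $\alpha\in\oPhi$, $j\in\Z$) one has $(j\delta+\alpha)^\vee$ computed from the normalized form; the key pairing is
$$\langle\lambda+\rho,(j\delta+\alpha)^\vee\rangle=\frac{2}{(\alpha,\alpha)}\big(\langle\omu+\orho,\alpha\rangle+ j(k+g)\big),$$
where $\omu$ is the projection of $\lambda$ into $\ofhstar$ (here I am using that $(\delta,\delta)=0$, $(\chi,\delta)=1$, and that the level contributes the $j(k+g)$ term). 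Since $\lambda\in\sC^-_\rat$ we have $\langle\omu+\orho,\alpha^\vee\rangle\in\Z$, so the condition that this pairing lie in $\Z$ is a condition purely on $\frac{2j(k+g)}{(\alpha,\alpha)}$.

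Second, I would substitute $k+g=-e/m$ and split by the ratio $D=(\theta_l,\theta_l)/(\theta_s,\theta_s)$. For long roots $\alpha$ (normalized $(\alpha,\alpha)=2$) the integrality condition on $j\delta+\alpha$ becomes $j(k+g)\in\Z$, i.e.\ $m\mid je$; since $(m,e)=1$ this says $m\mid j$. For short roots, $(\alpha,\alpha)=2/D$, so the condition becomes $jD(k+g)\in\Z$, i.e.\ $m\mid jDe$; using $(m,e)=1$ and $D\mid m$, write $m=Dm'$, and this becomes $m'\mid j$. Hence $\Phi(\lambda)$ consists of $\oPhi$ together with $j\delta+\alpha$ for $\alpha$ long and $m\mid j$, plus $j\delta+\alpha$ for $\alpha$ short and $m'\mid j$ — and also the obvious real roots $\oPhiplus$ with $j=0$. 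A short check identifies the minimal positive affine root in $\Phi^+(\lambda)$ not in $\oPhiplus$: it is $m\delta-\theta_s$ (since $\theta_s$ is short, the relevant modulus for $\theta_s$-type roots is $m'$, but one must verify $m\delta-\theta_s$ rather than $m'\delta-\theta_s$ is the correct fundamental root; this comes from checking $\langle\lambda+\rho,(m\delta-\theta_s)^\vee\rangle$ and the positivity/simplicity conditions, exactly as in the known $(D,e)=1$ case of Proposition \ref{prop7.1}). This gives that $\{m\delta-\theta_s,\alpha_1,\dots,\alpha_r\}$ is the standard fundamental system, and $W(\lambda)=\langle s_{m\delta-\theta_s},s_{\alpha_1},\dots,s_{\alpha_r}\rangle$.

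Third, I would construct the isomorphism $\phi:W_e\to W(\lambda)$. The affine Weyl group $W_e=\oW\ltimes e\oQ$ has generators $s_0,\dots,s_r$ with $s_0$ the reflection in $\{x:\langle x+\orho,\theta_s^\vee\rangle=-e\}$. Define $\phi$ on generators by $\phi(s_{\alpha_i})=s_{\alpha_i}$ for $i\ge 1$ and $\phi(s_0)=s_{m\delta-\theta_s}$. To see this extends to a group isomorphism one checks it preserves the Coxeter relations: the subdiagram on $s_1,\dots,s_r$ is the finite Dynkin diagram of $\oPhi$ on both sides, and the bonds from $s_0$ to the $s_i$ are governed by $\langle\alpha_i,(m\delta-\theta_s)^\vee\rangle=\langle\alpha_i,-\theta_s^\vee\rangle$, which matches the affine Coxeter data defining $W_e$ because $m\delta$ is orthogonal to $\ofhstar$ under the relevant pairing. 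Then one verifies the two displayed action formulas: that $\phi(s_i)$ acts on $\omu+k\chi$ as $s_i\cdot\omu+k\chi$ (immediate for $i\ge1$ since these reflections fix $\chi$ and $\delta$-components; for $i=0$ a direct dot-action computation using $\phi(s_0)=s_{m\delta-\theta_s}$), and that $\phi(e\gamma)$ acts as translation by $e\gamma$ on the $\ofhstar$-component modulo $\C\delta$. The formula $\phi(w)\cdot\mu=w\cdot\omu+k\chi\pmod{\C\delta}$ then follows by writing a general $w\in W_e$ as a product of generators and an element of $e\oQ$ and composing.

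\textbf{Main obstacle.} The genuinely delicate point is the interaction of the two different moduli ($m$ for long roots, $m'=m/D$ for short roots) with the identification of the \emph{correct} fundamental root $m\delta-\theta_s$ (rather than $m'\delta-\theta_s$ or some other candidate) and with the verification that $\phi(s_0)=s_{m\delta-\theta_s}$ makes the diagram automorphism work — i.e.\ checking that $\theta_s^\vee$ paired against the simple coroots reproduces exactly the affine Cartan data of $W_e$, which is where the hypotheses $(m,e)=1$ and $D\mid m$ are both essential. In the $(D,e)=1$ odd/even cases this was the role of $2\delta-\theta_s$ versus $\delta-\theta_s$; here one must redo that bookkeeping with the general denominator $e/m$. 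Everything else is routine dot-action arithmetic of the kind in \cite[p.~269]{T}.
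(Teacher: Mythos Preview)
Your proposal is correct and follows essentially the same approach as the paper. The paper does not give a detailed proof of this proposition; it simply states that it ``is an easy calculation, similar to those given in \cite[p.~269]{T},'' with the first observations of this kind attributed to \cite{Kumar}. Your plan carries out precisely that calculation: compute the integrality condition $\langle\lambda+\rho,(j\delta+\alpha)^\vee\rangle\in\Z$ explicitly, identify $\Phi(\lambda)$ and its fundamental system, and then match generators with those of $W_e$. The ``main obstacle'' you flag (the bookkeeping distinguishing the moduli for long versus short roots and pinning down the correct affine simple root) is exactly the point where the hypotheses $(m,e)=1$ and $D\mid m$ enter, and is the only part requiring care; the rest is, as you say, routine dot-action arithmetic.
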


\begin{rem}The maps $\phi$ and $\phi_\ell$ agree when $\ell/2D=e/m$, which is our 
major interest in this paper (the ``quantum case", at least when 
$(D,e)=1$.) The exact description of $\phi_\ell(w)\cdot\mu$ above (or of $\phi(w)\cdot \mu$) is
$$ \phi_\ell(w)\cdot\mu=(w\cdot\omu)^{k,a}=(w\cdot\omu +k\chi)^{k,a}=w\cdot\omu+k\chi+b\delta,$$
where $b$ is chosen so that the Casimir operator $\Omega$ acts on $L(\phi_\ell(w)\cdot\mu)$ with the 
same action as on $L(\mu)$. That is, $a=(\mu+2\rho,\mu)$ and $b=\frac{a-(w\cdot\omu+2\orho,
w\cdot\omu)}{2(k+g)}$.  This reader is cautioned that  the projections onto ${\mathbb C}\delta$ for $\mu$ and for $\phi_\ell(w)\cdot\mu$
will generally be different. In particular, if $w=\ell\gamma$, $\gamma\in \overset\circ Q$, then $\phi_\ell(\ell\gamma)$ 
acts as a translation by $\ell\gamma$ mod ${\mathbb C}\delta$ on the elements $\mu$ of level $k$ in
$\fh^*$.  That is, $\phi_\ell(e\gamma)\cdot\mu=\mu+e\gamma$ $\mod\,{\mathbb C}\delta$. However, it is
not true in general that $\phi_\ell(e\gamma)\cdot\mu =\mu+e\gamma$ exactly, even if $\gamma$ is
replaced on the right by any fixed element of $\gamma+{\mathbb Z}\delta$.
 
One consequence of having to work mod ${\mathbb C}\delta$ with level $k$ weights is that the meaning
of dominance orders in the correspondence between $\oP$ and $\oP+k\chi$ $\mod {\mathbb C}\delta$
is lost. However, the Bruhat-Chevalley order is preserved. See the \S9, Appendix I for a discussion of the Bruhat-Chevalley orders  relative to the often used partial orders $\uparrow$ of strong linkage.
\end{rem}

Let $\zeta\in\mathbb C$ be a primitive
$\ell$th root of unity and set $q=\zeta^2$.   Let $U_\zeta=U_\zeta(\oPhi)$ be the (Lusztig) quantum enveloping algebra at $\zeta$ for the root system $\oPhi$. Let ${\mathcal Q}_\ell$ be the category of type 1, integrable, finite dimensional $U_\zeta$-modules.  According
to Tanisaki \cite[Thm. 7.1]{T}, summarizing work of Kazhdan-Lusztig \cite{KL} and Lusztig \cite{L}, there is
a category equivalence
\begin{equation}\label{KLequivalence} F_\ell:{\mathcal O}_{-\ell/2D-g}\overset{\sim}\to {\mathcal Q}_\ell\end{equation}
between the category ${\mathcal O}_{-\ell/2D-g}$ of $\wg$-modules  and the category ${\mathcal Q}_\ell$.  
This holds for all positive integers $\ell$, when $\oPhi$ has type $A$ or $D$, but restrictions are
required in the other cases; see \cite[Thm. 7.1 and Rem. 7.2]{T}---note that Rem. 7.2(a) should be replaced
by $r>h$, the Coxeter number.

  In the notation of the previous section, letting $k:=-\ell/2D-g$,
${\mathcal O}_k$
identifies with the category
 $$\bigcup_{\lambda\in\sC^-_{\text{\rm rat}}, \lambda(c)=k}\widetilde\bO^{\text{\rm finite}}[\lambda]^+.$$
In this notation, $\widetilde\bO[\lambda+b\delta]^+$ identifies with $\widetilde\bO[\lambda]^+$; indeed,
these are identical subcategories of the category of $\wg$-modules.
For $\mu\in[\lambda]^+$, $F_\ell\widetilde L(\mu)=L_\zeta(\overset\circ\mu).$  Also, as noted in \cite[Thm. 7]{T}, Kazhdan-Lusztig
prove that $F_\ell\widetilde M(\mu)\cong \Delta_\zeta(\overset\circ\mu)$.  (Here $\mu\equiv\omu+k\chi\,
\mod\,{\mathbb C}\delta$.)

  Let $\lambda\in \sC^-_{\text{\rm rat}}$,  and let $\Gamma$ be a finite ideal in $[\lambda]^+$ with
  respect to $\leq^{W(\lambda)}$ (see Remark \ref{categories}(b)). Recall that every object $M$ in the highest weight category $\widetilde\bO^{\Gamma,+}[\lambda]$ has a finite
  composition series.  Let $P$ be a projective generator for $\widetilde\bO^{\Gamma,+}[\lambda]$. If
  $A_\Gamma:=\End(P)$ , then   $\widetilde\bO^{\Gamma,+}[\lambda]\cong A_\Gamma$-mod.  It
  is convenient to choose $P$ so that the modules in $\bO^{\Gamma,+}[\lambda]$ are actually
  $A_\Gamma$-modules.
  Put $\gr A_\Gamma=\bigoplus \rad^nA_\Gamma/\rad^{n+1}A_\Gamma$ be the positively graded algebra obtained from $A_\Gamma$ using
  its radical filtration.\footnote{The algebra $A_\Gamma$ is only determined up to Morita equivalence.
  However, if $A$ and $B$ are two Morita equivalent finite dimensional algebras, then $\gr A$ is Morita equivalent to $\gr B$,
  and, in addition, these two graded algebras have equivalent graded module categories. See Appendix
  II.} Let $\gr\widetilde\bO^{\Gamma,+}[\lambda]$ be the category $\gr A_\Gamma$-grmod
  of graded $\gr A_\Gamma$-modules.

\begin{thm}\label{theorem7.3} Let $\lambda\in\sC^-_{\text{\rm rat}}$ be regular,
 and let $\Gamma$ be a finite ideal in the poset $([\lambda]^+,\leq^{W(\lambda)})$.

(a) The category $\gr \widetilde\bO^{\Gamma,+}[\lambda]= \gr A_\Gamma$-${\text{\rm grmod}}$ is a highest weight category with poset $\Gamma$ (or $\Gamma_\nat$) and standard objects $\gr M(\mu)^+$, $\mu\in\Gamma$.  

(b) The algebra
$\gr\,A_\Gamma$ is a Koszul algebra. Also, $ \gr \widetilde\bO^{\Gamma,+}[\lambda]$
has a graded Kazhdan-Lusztig theory with respect to the length function on $\Gamma$ defined by the Coxeter
length.  

(c) For $\mu=x\cdot\lambda\in[\lambda]^+$,  form the radical filtration $M(\mu)^+=F^0(\mu)\supsetneq F^1(\mu)\supsetneq\cdots
\supsetneq F^m(\mu)=0$ of $M(\mu)^+$. For $\nu=y\cdot\lambda\in[\lambda]^+$,
$[F^i(\mu)/F^{i+1}(\mu):L(\nu)]$ is the coefficient of $t^{l(x)-l(y)-i}$ in the inverse Kazhdan-Lusztig
polynomial $Q_{y,x}$ for the Coxeter group $W(\lambda)$.
\end{thm}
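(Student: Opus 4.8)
The plan is to derive Theorem \ref{theorem7.3} from the known structure of the regular principal block of the affine category, transported along the translation and restriction equivalences set up in \S\S5--6, together with the combinatorial equivalences of Fiebig. First I would fix $\lambda\in\sC^-_\rat$ regular and recall that, by Proposition \ref{ideals} and Proposition \ref{prop6.7}, the category $\widetilde\bO^{\Gamma,+}[\lambda]$ is equivalent, for any finite ideal $\Gamma$ in $([\lambda]^+,\leq^{W(\lambda)})$, to a ``truncated'' parabolic block $\bO^{\Gamma,+}[\lambda]$ of $\fg$-modules, with standard objects $M(\mu)^+$ and costandard objects $M^*(\mu)_+$. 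Since $\lambda$ is regular, the isomorphism $\phi$ of Proposition \ref{prop7.1} identifies $W(\lambda)$ with an affine Weyl group and $([\lambda]^+,\leq^W)$ with a parabolic quotient poset $W_e^J$ (equivalently, via Appendix I, $\uparrow$) equipped with the Coxeter length. The point of part (c) is that the graded decomposition numbers of $\gr A_\Gamma$ are given by Kazhdan-Lusztig polynomials; part (c) then reads off the radical layer multiplicities from the Koszul grading.

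The key steps, in order: (1) Use the Kazhdan-Lusztig / Kashiwara-Tanisaki character formula for $L(\mu)$, $\mu\in[\lambda]^+$, in the (parabolic, negative level) affine category --- this gives $[M(\mu)^+:L(\nu)]$ as the value at $1$ of an inverse parabolic Kazhdan-Lusztig polynomial $Q_{y,x}$ for $W(\lambda)$, where $\mu=x\cdot\lambda$, $\nu=y\cdot\lambda$. (2) Invoke Fiebig's combinatorial equivalence \cite[Thm. 11]{Fiebig} (as in the statement's own roadmap) to identify the block $\bO^{\Gamma,+}[\lambda]$ --- or rather $\widetilde\bO^{\Gamma,+}[\lambda]$ --- with a combinatorial/sheaf-theoretic category whose graded version is manifestly Koszul with the expected standard modules; this yields parts (a) and (b), i.e. $\gr A_\Gamma$ is Koszul, the grading on $\gr M(\mu)^+$ is the one induced by radical filtration, and $\gr\widetilde\bO^{\Gamma,+}[\lambda]$ is a highest weight category with a graded Kazhdan-Lusztig theory in the sense of \cite{CPS9} for the Coxeter length on $\Gamma$. (3) Combine (1) and (2): in a highest weight category with a graded Kazhdan-Lusztig theory whose Koszul grading matches the radical filtration, the multiplicity $[\rad^iM(\mu)^+/\rad^{i+1}M(\mu)^+:L(\nu)]$ equals the coefficient of the appropriate power of $t$ in the graded decomposition number, which by (1) is $Q_{y,x}$; the precise power is $t^{l(x)-l(y)-i}$ by the normalization of the Koszul grading relative to the length function (so that $L(\nu)$ sits in graded degree $l(x)-l(y)$ at the top of the standard module). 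This is exactly the mirror of \cite[Thm. 8.4, Cor. 8.5]{PS6}, and indeed one could alternatively deduce (c) by transporting those quantum statements back through $F_\ell$ when $e>h$ and $e$ odd, then using translation to remove the regularity-of-$e$ hypothesis --- but the cleaner route is the direct affine argument via Fiebig.

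The main obstacle I expect is step (2): one must check carefully that Fiebig's combinatorial category, which is naturally attached to the full affine block, restricts compatibly to the finite ideal $\Gamma$ (so that the truncation $A_\Gamma$ really is the endomorphism algebra of a projective generator of the combinatorially-Koszul truncated category), and --- more delicately --- that the grading Fiebig's theory produces on $A_\Gamma$ coincides with the radical grading, not merely with \emph{some} Koszul grading. The comparison uses that a Koszul algebra carries an essentially unique positive grading (its radical grading), so once one knows $\gr A_\Gamma$ is Koszul \emph{and} that the Fiebig grading is positive with semisimple degree-zero part, the two gradings agree; the subtlety is entirely in verifying that Fiebig's equivalence \cite[Thm. 11]{Fiebig} applies at negative level with the parabolic (i.e. $\bO^+$) truncation and in the generality of an arbitrary semisimple $\og$, which is where one leans on the careful category-theoretic scaffolding built in \S\S3--6 and on \cite[Lemma 10]{RR} for the Verma-flag property of projectives in truncated categories. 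A secondary bookkeeping point is getting the sign and offset in the exponent $l(x)-l(y)-i$ exactly right; this is pinned down by evaluating at $i=0$ (top layer) and at the extreme layer, comparing with the known leading and trailing behavior of $Q_{y,x}$.
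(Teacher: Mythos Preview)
Your proposal has a genuine gap in step (2). Fiebig's \cite[Thm.~11]{Fiebig} does not produce a Koszul grading or a ``combinatorial/sheaf-theoretic category whose graded version is manifestly Koszul.'' It is a \emph{transfer} result: it says that two (non-critical) blocks of category $\sO$ for symmetrizable Kac--Moody algebras with isomorphic integral Weyl group data (matching Coxeter generators and stabilizers) are equivalent as abelian categories. It gives you no grading and no Koszulity on its own. So your chain (2)$\Rightarrow$(a),(b) is missing the actual source of Koszulity and of the graded Kazhdan--Lusztig theory.

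The paper's proof follows exactly the route you dismiss as an ``alternative.'' One uses Fiebig's equivalence to move from $\widetilde\bO^{\Gamma,+}[\lambda]$ to $\widetilde\bO^{\Gamma',+}[\lambda']$ for a carefully chosen $\lambda'$ at a level $k'$ with $-(k'+g)=\ell'/2D$, $\ell'$ odd (and not divisible by $3$ in type $G_2$), and $e'>h$. (Fiebig is applied to the full blocks $\bO[\lambda]\simeq\bO[\lambda']$; the restriction to the $\bO^+$-parabolic and to the ideal $\Gamma$ is then a separate, easy step using the uniform description of $[\lambda]^+$ via minimal-length coset representatives.) A second invocation of Fiebig handles the case where $W(\lambda)$ is the affine Weyl group of the dual root system. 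At the new level $\lambda'$, the Kazhdan--Lusztig functor $F_{\ell'}$ is a genuine equivalence to the quantum category $\mathcal Q_{\ell'}[\overset\circ\Gamma']$, and \emph{there} one applies \cite[Cor.~8.5]{PS6}, which is where Koszulity of $\gr A_\Gamma$, the graded Kazhdan--Lusztig theory, and the radical-layer multiplicity formula in (c) are actually proved. In short: Fiebig is used to reduce to the ``large odd $e$'' quantum case already settled in \cite{PS6}, not to manufacture a Koszul grading directly.
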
 
 \begin{proof}
 Suppose that $\lambda\in\sC^-_{\text{\rm rat}}$ is regular and that $k=\lambda(c)$. Write 
 $-(k+g)=e/m$, where $(e,m)=1$. 
 
 \medskip\noindent
 \underline{Case 1:} $D$ divides $m$. Proposition \ref{prop7.1} gives an isomorphism $\phi:W_e\overset\sim\to W(\lambda)$ of Coxeter groups, matching up indicated sets of  fundamental reflections. For an integer $k'$, put  $\lambda'=\overset\circ\lambda+k'\chi$, so that $k'=\lambda'(c)$. We can choose $k'$ so that $\lambda'\in \sC^-_{\text{\rm rat}}$ is regular. We can also choose $k'$ so that $-(k'+g)=\ell'/2D$ for an integer $\ell'$ not
 divisible by $2$ or $3$ (if $\oPhi$ has type $G_2$). Defining $e'$ as in (\ref{defnofe}) (using $\ell'$ for
 $\ell$ and $e'$ for $e$), we have $(D,e')=1$.  Proposition 7.1 then gives an isomorphism $\phi:W_{e'}\overset\sim\to W(\lambda')$, again matching up fundamental reflections. Thus, there is an isomorphism
 $W(\lambda)\to W(\lambda')$ preserving fundamental reflections. Since $\lambda$ and $\lambda'$
 are both regular, $W_0(\lambda)=W_0(\lambda')$ is trivial. Therefore, by \cite[Thm. 11]{Fiebig}, there
 is a category equivalence
  $\bO[\lambda]\overset\sim\to\bO[\lambda']$.  (See Proposition \ref{ideals}.) Since the orders $\leq^{W(\lambda)}$
  and $\leq^{W(\lambda')}$ obviously correspond, standard modules correspond (implicit in \cite{Fiebig}).
  Similar comments apply to costandard modules. 
    
  The sets $[\lambda]^+$ and $[\lambda']^+$ are
  easily characterized (when $\lambda,\lambda'$ are regular) in terms of representing elements $w\cdot\lambda$, $w'\cdot\lambda'$
  by requiring $w$ to be of maximal length in $W_0w\subseteq W(\lambda)$. Thus, we get
  an equivalence $\bO^+[\lambda]\cong\bO^+[\lambda']$, as well as an equivalence of $\widetilde\bO^+[\lambda]\cong\widetilde\bO^+[\lambda']$. (See Proposition 5.5.) Since the partial orderings correspond, so do their
  ideals. Let $\Gamma'\leq[\lambda']^+$ correspond to $\Gamma\leq[\lambda]^+$. Clearly,
  $\widetilde\bO^{\Gamma,+}[\lambda]\cong\widetilde\bO^{\Gamma',+}[\lambda']$.
  
 Adjusting $k'$ further we can assume that $F_{\ell'}$ is a category equivalence. In particular, we
 can assume that $\ell'$ is odd, not divisible by 3 in type $G_2$, and $>h$. 
   If $\Gamma'$ is an ideal in the $\leq^{W(\lambda')}$ partial ordering, then $\overset\circ\Gamma'$ is an ideal in 
 the $\uparrow_{e'}$-order, by Appendix I, Theorem \ref{theorem9.7}.  Therefore, $\overset\circ\Gamma'$ is an ideal in the natural 
 order $\leq_\nat$ on $\oP$.  (As is well-known, when $\omu\leq_\nat \overset\circ\nu$, then $\omu\uparrow_{e'}
 \overset\circ\nu$.)  We have $F_\ell\widetilde\bO^{\Gamma',+}[\lambda])
 ={\mathcal Q}_\ell[\overset\circ\Gamma']$. Combining this with the previous paragraph gives
 an equivalence $\widetilde \bO^{\Gamma,+}[\lambda]\overset\sim\to{\mathcal Q}_{\ell'}[\overset\circ\Gamma']$
 of highest weight categories. Notice that by Proposition \ref{prop6.7}(c), the posets $\Gamma$ and $\Gamma'$
 are isomoprhic.  Now apply \cite[Cor. 8.5]{PS6}.  \footnote{Results claimed in \cite[\S9.5 (para. 1), Lem. 9.10.5, Thm. 9.10.2]{ABG}, together with the order compatibility
 result given in Theorem \ref{theorem96} in Appendix I, imply that $A_\Gamma$ itself is Koszul. (According to 
 \cite[Appendix]{HPS}, a result like our Theorem \ref{theorem9.6} is required in \cite[Lem. 9.10.5]{ABG} to make its
 proof work. In turn, this lemma is required for \cite[Thm. 9.10.1]{ABG}, a main result.) However, we only need
 that $\gr A_\Gamma$ is Koszul for the results below in the singular weight case, and 
  no better result is obtained in these cases by knowing $A_\Gamma$ is Koszul here. (Koszulity is not generally preserved under exact functors.)}  
   
 \medskip
 \noindent
 \underline{Case 2: $D$ does not divide $e$:} In this case, $W(\lambda)\cong W_e^\vee=\overset\circ W\ltimes Q^\vee$,
 which is the affine Weyl group for the dual root system $\Phi^\vee$. By \cite[Thm. 11]{Fiebig} again,
 $\widetilde \bO^{\Gamma,+}[\lambda]$ is equivalent to a similar category, but replacing $\mathfrak g$
 by the affine Lie algebra associated to the dual root system. Hence, Case 2 reduces to Case 1. (Notice that
 \cite[Thm. 11]{Fiebig} does not require the underlying Lie algebras to be the same!)
   \end{proof}

\begin{thm}\label{affineLiealgebramult} 
Let $\lambda=
\olambda+k\chi+b\delta\in\sC^-_{\text{\rm rat}}$. For $\mu\in [\lambda]^+$, $M(\mu)^+$ has a filtration
$M(\mu)^+=F^0(\mu)\supseteq F^1(\mu)\supseteq\cdots\supseteq F^m(\mu)=0$ in which each
section $F^i(\mu)/F^{i+1}(\mu)$ is a semisimple $\wg$-module, and such that, given
any $\nu\in[\lambda]^+$, the multiplicity
$[F^i(\mu)/F^{i+1}(\mu):L(\nu)]$ is the coefficient of $t^{l(w_\mu)-l(w_\nu)-i}$ in the inverse Kazhdan-Lusztig
polynomial $Q_{w_\nu,w_\mu}$ associated to $W(\lambda)$.
If $\lambda$ is regular, then the filtration $F^\bullet(\mu)$ is the radical filtration of $M(\mu)^+$.
\end{thm}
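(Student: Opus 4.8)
The plan is to reduce the statement to the case of \emph{regular} $\lambda$, which is already contained in Theorem~\ref{theorem7.3}, by translating the radical filtration of a regular generalized Verma module onto the facet determined by $\lambda$, using the exact translation functors of \S6.

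For regular $\lambda$ there is nothing new to do: the assertion is precisely Theorem~\ref{theorem7.3}(c), applied with $\Gamma$ the finite ideal of $([\lambda]^+,\leq^{W(\lambda)})$ generated by $\mu$ --- recall $M(\mu)^+$ has finite length and lies in $\bO^{\Gamma,+}[\lambda]$ --- and with $x=w_\mu$, $y=w_\nu$; the filtration produced there is by construction the radical filtration, which also yields the final sentence. So I would concentrate on the singular case. I would begin by choosing a regular weight $\lambda_\circ\in\sC^-_\rat$ with the same level $\lambda_\circ(c)=\lambda(c)=k$, integral finite part, and $\lambda-\lambda_\circ\in WP^+$. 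This is the standard ``translation onto a facet'' choice, arranged using Tanisaki's explicit description of $\Phi(\lambda)$, $W(\lambda)$ (as in \S7.1 and Proposition~\ref{prop7.1}), which also shows $\Phi(\lambda_\circ)=\Phi(\lambda)$, hence $W(\lambda_\circ)=W(\lambda)$ with the same fundamental reflections, length function $l$, and (inverse) Kazhdan--Lusztig polynomials. Since $\lambda_\circ$ is regular, $\Phi_0(\lambda_\circ)=\emptyset\subseteq\Phi_0(\lambda)$, so the hypotheses of Propositions~\ref{KTprop} and \ref{important prop} and of Lemmas~\ref{important lemma} and \ref{lemma6.6} all hold for the pair $(\lambda_\circ,\lambda)$ and the exact translation functor $T^{\lambda_\circ}_\lambda\colon\bO^\infty[\lambda_\circ]\to\bO^\infty[\lambda]$ (or its $\wg$-analogue $\wT^{\lambda_\circ}_\lambda$ of \S6, via Proposition~\ref{ideals}).

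Next I would carry out the construction. Writing $\mu=w_\mu\cdot\lambda$ with $w_\mu$ of minimal length in $w_\mu W_0(\lambda)\subset W(\lambda)=W(\lambda_\circ)$, Lemma~\ref{important lemma}(a) gives $w_\mu\cdot\lambda_\circ\in[\lambda_\circ]^+$, and regularity of $\lambda_\circ$ makes $w_\mu$ automatically the minimal-length representative of $w_\mu\cdot\lambda_\circ$. Take the radical filtration
$$M(w_\mu\cdot\lambda_\circ)^+=G^0\supsetneq G^1\supsetneq\cdots\supsetneq G^m=0$$
of the finite-length generalized Verma module $M(w_\mu\cdot\lambda_\circ)^+$, and set $F^i(\mu):=T^{\lambda_\circ}_\lambda G^i$. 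By Lemma~\ref{lemma6.6} the top term is $F^0(\mu)\cong M(\mu)^+$, and exactness of $T^{\lambda_\circ}_\lambda$ makes $M(\mu)^+=F^0(\mu)\supseteq F^1(\mu)\supseteq\cdots\supseteq F^m(\mu)=0$ a filtration with sections $F^i(\mu)/F^{i+1}(\mu)\cong T^{\lambda_\circ}_\lambda(G^i/G^{i+1})$. Each $G^i/G^{i+1}$ is a direct sum of irreducibles $L(y\cdot\lambda_\circ)$, and by Proposition~\ref{KTprop}(b) (together with Remark~\ref{minimal length} and Proposition~\ref{important prop}), $T^{\lambda_\circ}_\lambda L(y\cdot\lambda_\circ)$ equals $L(y\cdot\lambda)$ when $y$ is minimal in $yW_0(\lambda)$ and $0$ otherwise; hence each section is a direct sum of irreducibles $L(\nu)$, $\nu\in[\lambda]^+$, and so is semisimple as a $\fg$-module, a fortiori as a $\wg$-module. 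For the multiplicities, if $\nu=w_\nu\cdot\lambda\in[\lambda]^+$ with $w_\nu$ minimal, the only constituent of $G^i/G^{i+1}$ that survives to $L(\nu)$ is $L(w_\nu\cdot\lambda_\circ)$, so
$$[F^i(\mu)/F^{i+1}(\mu):L(\nu)]=[G^i/G^{i+1}:L(w_\nu\cdot\lambda_\circ)],$$
and Theorem~\ref{theorem7.3}(c) (with $x=w_\mu$, $y=w_\nu$) identifies the right-hand side with the coefficient of $t^{l(w_\mu)-l(w_\nu)-i}$ in the inverse Kazhdan--Lusztig polynomial $Q_{w_\nu,w_\mu}$ of $W(\lambda)=W(\lambda_\circ)$, as claimed.

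The main obstacle is not a single hard step but bookkeeping: verifying that $\lambda_\circ$ can be chosen simultaneously regular, of level $k$, and with $\lambda-\lambda_\circ\in WP^+$ (routine from Tanisaki's description of the integral root data), and tracking the minimal-length representatives of cosets of $W_0(\lambda)$ through the translation --- exactly what Lemma~\ref{important lemma} and Remark~\ref{minimal length} supply. I do \emph{not} expect $F^\bullet(\mu)$ to be the radical filtration of $M(\mu)^+$ for singular $\lambda$: an exact functor need not carry radical filtrations to radical filtrations, which is precisely why only a semisimple series is asserted in that case. For regular $\lambda$ no translation is needed and $F^\bullet(\mu)$ is literally the radical filtration by Theorem~\ref{theorem7.3}(c).
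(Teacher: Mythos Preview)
Your proposal is correct and follows essentially the same approach as the paper: reduce the singular case to the regular case (Theorem~\ref{theorem7.3}) by translating the radical filtration of a regular generalized Verma module via the exact functor $T^{\lambda_\circ}_\lambda$, using Lemma~\ref{important lemma}, Lemma~\ref{lemma6.6}, and Proposition~\ref{KTprop}(b) exactly as you indicate. The only notable difference is that the paper makes the explicit choice $\lambda'=-2\orho+k\chi+b\delta$ for the regular companion weight (so that $\lambda-\lambda'=\olambda+2\orho\in\overset\circ W\oPplus\subseteq WP^+$ and regularity is immediate), whereas you argue abstractly for the existence of such a $\lambda_\circ$; the explicit choice dispenses with the ``routine'' verification you flag as bookkeeping.
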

\begin{proof}  We prove this result in the ``quantum case" (discussed in the proof of Theorem \ref{theorem7.3}) in which
$-(k+g)=\ell/2D$ with $(D,e)=1$. We leave the ``non-quantum case" to the reader. The case in which $\lambda$ is regular is handled in
the previous theorem, so assume that 
  $\lambda$ is not regular. The weight $\lambda':=-2\orho +k\chi +b\delta$ is obviously regular (so $\Phi_0(\lambda')=\emptyset\subseteq\Phi_0(\lambda)$), and it lies in
 $\sC^-_{\text{\rm rat}}$. 
Also,  $\lambda-\lambda'=\olambda + 2\orho\in \overset\circ W\oPplus\subseteq WP^+$. 
In addition, $W(\lambda)=W(\lambda')$. 

Let $\mu\in[\lambda]^+$, so that $\mu=w_\mu\cdot\lambda$, where $w_\mu\in W(\lambda)$ has minimal
length among all $w\in W(\lambda)$ for which $\mu=w\cdot\lambda$. 
Thus, by Lemma \ref{important lemma}, $\mu':=w_\mu\cdot\lambda'\in[\lambda']^+$, and so
$T^{\lambda'}_\lambda M(\mu')^+=M(\mu)^+$. Put $F^i(\mu)=T^{\lambda'}_\lambda \rad^i M(\mu')^+$. Then $F^\bullet(\mu)$ is a filtration of $M(\mu)^+$. Since the functor $T^{\lambda'}_\lambda$ is exact we 
 have $$F^i(\mu)/F^{i+1}(\mu) \cong T^{\lambda'}_\lambda(\rad^iM(\mu')^+/\rad^{i+1}M(\mu')^+)$$ is
 semisimple. To determine the multiplicity $[F^i(\mu)/F^{i+1}(\mu):L(\nu)]$, we can
 assume that $\nu\in[\lambda]^+$. Write $\nu=w_\nu\cdot\lambda$.  Since
 $$[\rad^iM(\mu)^+/\rad^{i+1}M(\mu)^+:L(w_\lambda\cdot\lambda)] =[\rad^iM(\mu')^+/\rad^{i+1}M(\mu')^+:L(w_\nu\cdot\lambda')],$$
 which is the coefficient of $t^{l(w_\mu)-l(w_\nu)-i}$ in the
 inverse Kazhdan-Lusztig polynomial $Q_{w_\nu,w_\lambda}$ of $W(\lambda)$, the result follows.\end{proof}

Now we consider an analogous result for the quantum enveloping algebras $U_\zeta$ 
of type $A_n$ or $D_n$. Let $C^-$ be the anti-dominant chamber. Given a dominant weight
$\nu$, let $w_\nu\in W_e$ have minimal length so that $w_\nu^{-1}\nu\in\overline{C^-}$. 

 \begin{thm}\label{quantum}Assume that $\oPhi$ has type $A$ or $D$.  Also, for type $D_{2n+1}$, it is required that $e\geq 3$.\footnote{There is no other restriction on the positive integer $e$.  In case $e$ is odd, the arguments in Theorems \ref{theorem7.3}  and \ref{affineLiealgebramult} can be rearranged to 
 treat all quantum cases, using translation functors alone, without recourse to \cite{Fiebig}. Of course,
 use of \cite{Fiebig} not only handles the $e$ even case, but also allows a treatment
 for affine Lie algebras of all weights in ${\mathcal C}_{\text{\rm rat}}^-$ in Theorems \ref{theorem7.3} and 
 \ref{affineLiealgebramult}.}
 (a) Assume that $\mu,\nu\in\oPplus$ are dominant weights which $W_e$-conjugate.
 Then the standard module $\Delta_\zeta(\mu)$ has filtration $\Delta_\zeta(\mu)=F^0(\mu)\supseteq \cdots
 \supseteq F^m(\mu)=0$ by $U_\zeta$-submodules with each section $F^i(\mu)/F^{i+1}(\mu)$ a
 semisimple $U_\zeta$-module. Further, the multiplicity of $[F^i(\mu)/F^{i+1}(\mu):L_\zeta(\nu)]$ 
 can be taken to be the coefficient of $t^{l(w_\mu)-l(w_\nu)-i}$ in the inverse Kazhdan-Lusztig polynomial $Q_{w_\nu,w_\mu}$ associated to $W(\lambda)$. If $\mu$ is regular, then
 the filtration $F^\bullet(\mu)$ is the radical filtration of $\Delta_\zeta(\mu)$.
 
 (b) Assume that $e\geq h$. Let $\Gamma$ be a finite ideal of $e$-regular weights. Let $B_\Gamma$ be the finite
 dimensional algebra whose module category identifies with the category of $U_\zeta$-modules having
 highest weights in $\Gamma$.  Then the algebra $\gr B_\Gamma$ is a Koszul algebra. Also,
 the category $\gr B_\Gamma$-grmod has a graded Kazhdan-Lusztig theory with respect to
 the length function (defined on $W_e$-orbits in $\Gamma$ by the Coxeter length.
\end{thm}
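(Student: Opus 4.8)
The plan is to transport the affine-Lie-algebra results of \S\S5--7 across the Kazhdan-Lusztig equivalence (\ref{KLequivalence}), which in types $A$ and $D$ is a genuine equivalence of categories for every positive integer $\ell$ (the restriction $e\geq 3$ in type $D_{2n+1}$ being needed only so that the combinatorial inputs from \S7 and Appendix I invoked below hold without exception). For part (a), given $W_e$-conjugate dominant weights $\mu,\nu\in\oPplus$, I would first fix $\lambda\in\sC^-_{\text{\rm rat}}$ of level $k$ with $-(k+g)=\ell/2D$ --- so that $(D,e)=1$ automatically, $\oPhi$ being simply laced --- chosen so that under the Coxeter isomorphism $\phi_\ell\colon W_e\overset\sim\to W(\lambda)$ of Proposition \ref{prop7.1}, together with reduction modulo ${\mathbb C}\delta$, the $W_e$-orbit of $\mu$ is identified with $[\lambda]^+$. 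Under $\phi_\ell$ we may then regard $w_\mu,w_\nu\in W_e$ (the minimal-length elements of the theorem statement) as elements of $W(\lambda)$; they remain of minimal length in their cosets, $\mu$ and $\nu$ become the elements $w_\mu\cdot\lambda$ and $w_\nu\cdot\lambda$ of $[\lambda]^+$, and since $\phi_\ell$ matches the chosen fundamental reflections it preserves Coxeter length and identifies the inverse Kazhdan-Lusztig polynomial $Q_{w_\nu,w_\mu}$ for $W_e$ with the one for $W(\lambda)$.

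Next I would apply Theorem \ref{affineLiealgebramult} to the standard module $M(w_\mu\cdot\lambda)^+$, which covers regular and singular $\lambda$ uniformly: it has a filtration $F^\bullet$ with semisimple $\wg$-sections, the multiplicity of $L(w_\nu\cdot\lambda)$ in the $i$-th section being the coefficient of $t^{l(w_\mu)-l(w_\nu)-i}$ in $Q_{w_\nu,w_\mu}$, and this filtration is the radical filtration when $\lambda$ is regular. Restricting to $\wg$ by Proposition \ref{ideals} gives the identical filtration data for the corresponding standard module of $\widetilde\bO^+[\lambda]$, whose image under the exact equivalence $F_\ell$ is the quantum Weyl module $\Delta_\zeta(\mu)$; since $F_\ell$ carries irreducibles to irreducibles ($\wL\mapsto L_\zeta$) and preserves composition multiplicities, this yields the asserted semisimple filtration of $\Delta_\zeta(\mu)$ with the asserted multiplicities $[F^i/F^{i+1}:L_\zeta(\nu)]$. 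If in addition $\mu$ is $e$-regular then $\lambda$ is regular, so $F^\bullet$ is the radical filtration of the affine standard module; as an equivalence of abelian categories carries radical filtrations to radical filtrations, $F^\bullet(\mu)$ is then the radical filtration of $\Delta_\zeta(\mu)$. This proves (a).

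For part (b), the hypothesis $e\geq h$ guarantees the existence of $e$-regular dominant weights, so I would choose $\lambda\in\sC^-_{\text{\rm rat}}$ regular whose linkage class corresponds, via $\phi_\ell$ and $F_\ell$, to the block of ${\mathcal Q}_\ell$ containing $\Gamma$, and let $\Gamma_{\mathrm{aff}}\trianglelefteq([\lambda]^+,\leq^{W(\lambda)})$ be the ideal matching $\Gamma$; by Appendix I, Theorem \ref{theorem9.7}, $\Gamma$ is an ideal for the strong-linkage order $\uparrow_e$ (equivalently for $\leq_\nat$) if and only if $\Gamma_{\mathrm{aff}}$ is a $\leq^{W(\lambda)}$-ideal. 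Then $F_\ell$ restricts to an equivalence $\widetilde\bO^{\Gamma_{\mathrm{aff}},+}[\lambda]\overset\sim\to{\mathcal Q}_\ell[\Gamma]$ of highest weight categories respecting posets and standard and costandard objects, so $B_\Gamma$ is Morita equivalent to the algebra $A_{\Gamma_{\mathrm{aff}}}$ of \S7; hence, by Appendix II, $\gr B_\Gamma$ and $\gr A_{\Gamma_{\mathrm{aff}}}$ are graded Morita equivalent, with equivalent graded module categories. Finally I would invoke Theorem \ref{theorem7.3}(b), which asserts that $\gr A_{\Gamma_{\mathrm{aff}}}$ is Koszul and that $\gr\widetilde\bO^{\Gamma_{\mathrm{aff}},+}[\lambda]$ has a graded Kazhdan-Lusztig theory for the Coxeter length; transporting both statements along the graded Morita equivalence shows that $\gr B_\Gamma$ is Koszul and that $\gr B_\Gamma$-grmod has a graded Kazhdan-Lusztig theory for the length function induced on $W_e$-orbits in $\Gamma$ by the Coxeter length.

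The work here is bookkeeping rather than a new idea: one must verify that the three identifications in play --- the category equivalence $F_\ell$, the Coxeter isomorphism $\phi_\ell$, and the order and Morita-invariance results of the appendices --- are mutually compatible, and in particular that $F_\ell$ matches the orders $\uparrow_e$ and $\leq^{W(\lambda)}$ together with their length functions, so that the multiplicity formulas and the highest-weight and graded structures survive the passage from $\wg$-modules back to $U_\zeta$-modules. The subtlest point is the Koszulity in (b): Koszulity is preserved neither by arbitrary category equivalences nor by exact functors, so it is essential that the transfer be carried out at the level of the graded algebras $\gr B_\Gamma$ and $\gr A_{\Gamma_{\mathrm{aff}}}$ through the graded Morita equivalence of Appendix II, and not directly through $F_\ell$.
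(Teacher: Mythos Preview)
Your proposal is correct and takes essentially the same approach as the paper, which gives only a one-sentence proof: ``This follows from Theorem \ref{affineLiealgebramult}, since for types $A$ and $D$ as indicated, the functor $F_\ell$ is an equivalence of categories, preserving standard modules.'' You have carefully unpacked this terse argument, and in particular your separate treatment of part (b) via Theorem \ref{theorem7.3}(b) and the graded Morita equivalence of Appendix II is the natural elaboration the paper leaves implicit. One small inaccuracy: the restriction $e\geq 3$ in type $D_{2n+1}$ comes from the range of validity of the Kazhdan--Lusztig equivalence $F_\ell$ itself (via \cite[Thm.~7.1, Rem.~7.2]{T}), not from the combinatorics of \S7 or Appendix I.
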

 
 \begin{proof} This follows from Theorem \ref{affineLiealgebramult}, since for types $A$ and $D$ as
 indicated, the functor $F_\ell$ is an equivalence of categories, preserving standard modules.
 \end{proof}
 
   The argument above, traced through from the proof of Theorem \ref{theorem7.3}, gives the additional result
   that, under the hypotheses of Theorem \ref{quantum}(b), there is an
   isomorphism $\Ext^n_{B_\Gamma}(L,L')\cong \Ext^n_{\gr B_\Gamma}(L,L')$,  valid for all $n\geq 0$
   and for all irreducible
   $B_\Gamma$-modules $L,L'$ (which are naturally irreducible $\gr B_\Gamma$-modules);  see \cite[Cor. 8.5(a)]{PS6}. Consequently, the homological algebra of $B_\Gamma$ in Theorem
   \ref{quantum}(b) is very close to that of $\gr B_\Gamma$. Of course, $B_\Gamma$ is even isomorphic
   to $\gr B_\Gamma$, if we grant the Koszulity of $A_\Gamma$ argued in footnote 7.

\section{Applications} 

\numberwithin{equation}{subsection}
\numberwithin{thm}{subsection}

In this section, we reinterpret Theorem \ref{quantum} for the $q$-Schur algebras and then pass to a similar
result for Specht modules for Hecke algebras. Then we briefly raise some open questions. Finally, 
we obtain some similar results for classical Schur algebras in positive characteristic, involving the
James conjecture and the bipartite conjecture. 

\subsection{
$q$-Schur and Hecke algebras} Given a Coxeter system $(W,S)$, let $\wH=\wH(W)$ be the
Hecke algebra over $\sZ={\mathbb Z}[\q,\q^{-1}]$ (Laurent polynomials in a variable $\q$) with basis $\{\tau_w\,|\,w\in W\}$ and defining
relations  $$\tau_s\tau_w=\begin{cases} \tau_{sw},\,\,{\text{\rm if}}\,l(sw)=l(w)+1\\
\q\tau_w + (\q-1)\tau_{sw},\,\,{\text{\rm otherwise}}\end{cases}\,\,{\text{\rm for $s\in S,w\in W$. }}$$
Let $\Psi:\wH\to\wH$ be the $\sZ$-algebra involution defined by $\Psi(\tau_w)=(-\q)^{l(w)}\tau_{w^{-1}}^{-1}$.
If $\wM$ is a $\wH$-module, then $\wM^\Psi$ denotes the module obtained by making $\wH$ act
through $\Psi$.

For example, let ${\mathfrak S}_r$ be the symmetric group of degree $r$, and let $S=\{(1,2),\cdots,
(r-1,r)\}$.
Then $({\mathfrak S}_r,S)$ is a Coxeter system. 
  In this case, denote $\wH(W)$ simply by $\wH$, or $\wH(r)$ if $r$ needs to be mentioned.
Let $\Lambda(n,r)$ (resp., $\Lambda^+(n,r)$) be the set of compositions (resp., partitions) $\lambda$ of $r$ with at most $n$ parts; let $\Lambda(r)$ (resp., $\Lambda^+(r)$) be the set of all compositions (resp., partitions)
of $r$. For  $\lambda\in\Lambda(n,r)$, let $\wT_\lambda$ be the right ``permutation"
module for $\wH$ defined by $\lambda$, and 
$\wT(n,r):=\bigoplus_{\lambda\in\Lambda(n,r)}\wT_\lambda$.  The (integral) $\q$-Schur algebra (of bidegree $(n,r)$)  is the endomorphism algebra
\begin{equation}\label{Schur} \wS_\q(n,r):=\End_{\wH}(\wT(n,r)).\end{equation}
Given any commutative $\sZ$-algebra $K$ (e.~g., a field), let $\wS_\q (n,r)_K$ )(or just $S_\q (n,r)$ if $K$
is clear) denote the $K$-algebra
$\wS(n,r)\otimes K$---it has a description similar to (\ref{Schur}), replacing $\wH$ and $\wT(n,r)$ by
$H=\wH_K$ and $T(n,r)=\wT(n,r)_K$, respectively.

From now on assume that $K$ contains ${\mathbb Q}(\zeta)$, where $\zeta$ is a primitive $\ell$th
root of 1. Put $q=\zeta^2$, a primitive $e$th root of 1, in the notation of the previous section.  (No restriction
is placed on $e$, except as otherwise noted.) The
  triple $(S_q(n,r), T, H)$ satisfies the ``ATR" set-up prosyletized in \cite{CPS4}. In particular, given
$M\in{\text{\rm mod--}}H$ (right modules), put $M^\diamond:=\Hom_H(M,T)\in S_q(n,r){\text{\rm --mod}}$,
and, given $N\in S_q(n,r){\text{\rm --mod}}$, let $N^\diamond:=\Hom_{S_q(n,r)}(N,T)$. In this
way, there is a contravariant functor $M\mapsto M^\diamond$ (resp., $N\mapsto N^\diamond$)
from mod--$H$ to $S_q(n,r)$--mod (resp., $S_q(n,r)$--mod to mod--$H$). The convenience of denoting
them by the same symbol overcomes the annoyance of denoting them by the same symbol!

If $U_\zeta$ is the quantum enveloping algebra of type $A_{n-1}$ over $K$, there is a surjective homomorphism
$U_\zeta\twoheadrightarrow S_q(n,r)$. In this way, $S_q(n,r)$-mod is embedded in $U_\zeta$-mod. In
addition, $S_q(n,r)$--mod is a highest weight category with poset $(\Lambda^+(n,r),\trianglelefteq)$ 
defined by the dominance order on partitions. Irreducible modules $L_q(\lambda)$,
standard modules $\Delta_q(\lambda)$, and costandard modules $\nabla_q(\lambda)$ are all
indexed by $\Lambda^+(n,r)$. When regarded as $U_\zeta$-modules, $L_q(\lambda)$ gets
relabeled as $L_\zeta(\bar\lambda)$, where $\bar\lambda\in\oPplus$ is defined as follows: write
$\lambda=(\lambda_1,\cdots,\lambda_r)$, $\lambda_1\geq\cdots\geq\lambda_r$, and put
$\bar\lambda=a_1\varpi_1+\cdots+a_{r-1}\varpi_{r-1}$ with $a_i:=\lambda_i-\lambda_{i+1}$. (In
this expression, we label the simple roots for $A_{r-1}$ in the usual way, as in \cite{Bo}.) Each
$\lambda\in\Lambda^+(r)$, thus determines $w_\lambda\in{\mathfrak S}_r$ which has minimal
length among all $w$ satisfying $w^{-1}\cdot\bar\lambda\in C^-$ (the anti-dominant chamber
for $U_\zeta$).

In particular, for $\lambda\in \Lambda^+(n,r)$, 
we have 
\begin{equation}\label{modules}
\begin{cases} \Delta_q(\bar\lambda)^\diamond\cong S^\lambda,\\
\nabla_q(\bar\lambda)^\diamond\cong S_{\lambda'}^\Psi.\end{cases}
\end{equation}
In this expression, $\lambda'$ denotes the conjugate partition to $\lambda\in\Lambda^+(r)$.
In addition, the irreducible $H$-modules are indexed by the set $\Lambda^+(r)_{\text{\rm row-reg}}$ of
(row) $e$-regular partitions (i.~e., no row is repeated $e$-times). If $\lambda\in\Lambda^+(r)$, then $\bar\lambda$ is $e$-restricted (i.~e.,
it has all coefficients of fundamental dominant weights positive and $<e$) if and only if $\lambda'$
is $e$-regular. Then for $\lambda\in\Lambda^+_{\text{\rm res}}(r)$ (the $e$-restricted partitions),
\begin{equation}\label{irreducible modules}
L(\lambda)^\diamond\cong\begin{cases} D^\Psi_{\lambda'},
\quad \lambda\in\Lambda^+_{\text{res}}(r);\\
 0,\quad {\text{\rm otherwise}}.\end{cases}
 \end{equation} 
 
 \begin{thm}\label{Schur and Hecke} Assume that $K$ is a field containing ${\mathbb Q}(\zeta)$. 
 
 (a) For $\lambda\in\Lambda^+(n,r)$, the $q$-Weyl module $\Delta_q(\lambda)$ for the $q$-Schur algebra
 $S_q(n,r)$ has a filtration $\Delta_q(\lambda)=F^0(\lambda)\supseteq F^1(\lambda)\supseteq\cdots
 \supseteq F^m(\lambda)=0$ with semisimple sections $F^i(\lambda)/F^{i+1}(\lambda)$
 in which, given $\nu\in\Lambda^+(r)$, the multiplicity of $L_q(\nu)$ in $F^i(\lambda)/F^{i+1}(\lambda)$
 is the coefficient of $t^{l(w_{\bar\lambda})-l({w_{\bar\nu}})-i}$ in the inverse
 Kazhdan-Lusztig polynomial $Q_{w_{\bar\nu},w_{\bar\lambda}}$ associated to the affine Weyl
 group $W_e$ of type $A_{r-1}$.
 
(b) For $\lambda\in\Lambda^+(r)$, the $q$-Specht module $S^\lambda$ for the Hecke algebra $H$
has a filtration $0=G^0(\lambda)\subseteq G^1(\lambda)\subseteq\cdots
 \subseteq G^m(\lambda)=S^\lambda$ with semisimple sections $G^{i+1}(\lambda)/G^{i}(\lambda)$
 in which, given $\nu\in\Lambda_{\text{\rm res}}^+(r)$, the multiplicity of the irreducible $H$-module $D^\Psi_{\nu'}$
 in the section $G^{i+1}(\lambda)/G^{i}(\lambda)$ is the coefficient of $t^{l(w_{\bar\lambda})-l({w_{\bar\nu}})-i}$ in the inverse
 Kazhdan-Lusztig polynomial $Q_{w_{\bar\nu},w_{\bar\lambda}}$ associated to the affine Weyl
 group $W_e$ of type $A_{r-1}$.\end{thm}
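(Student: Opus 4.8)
The plan for (a) is to transport Theorem \ref{quantum}(a) across the embedding $S_q(n,r)\text{-mod}\hookrightarrow U_\zeta\text{-mod}$ coming from the surjection $U_\zeta\twoheadrightarrow S_q(n,r)$, where $U_\zeta$ has type $A_{n-1}$. Since every partition of $r$ has at most $r\le n$ parts, $\Lambda^+(n,r)=\Lambda^+(r)$, and, using the well-known fact that for $n\ge r$ the category $S_q(n,r)\text{-mod}$ is equivalent to $S_q(r,r)\text{-mod}$ by an equivalence identifying $\Delta_q(\lambda)$, $\nabla_q(\lambda)$, $L_q(\lambda)$ and the weight $\bar\lambda$, I would first reduce to $n=r$, so that $U_\zeta$ is of type $A_{r-1}$. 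Under the embedding $\Delta_q(\lambda)$ becomes $\Delta_\zeta(\bar\lambda)$ and $L_q(\nu)$ becomes $L_\zeta(\bar\nu)$. Because $\oPhi$ has type $A$, Theorem \ref{quantum}(a) applies with no restriction on $e$ and produces a filtration $\Delta_\zeta(\bar\lambda)=F^0\supseteq F^1\supseteq\cdots\supseteq F^m=0$ with semisimple sections, in which $[F^i/F^{i+1}:L_\zeta(\bar\nu)]$ is the coefficient of $t^{\,l(w_{\bar\lambda})-l(w_{\bar\nu})-i}$ in the inverse Kazhdan--Lusztig polynomial $Q_{w_{\bar\nu},w_{\bar\lambda}}$ for $W(\lambda)\cong W_e$, the affine Weyl group of type $A_{r-1}$.

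Next I would descend this filtration from $U_\zeta\text{-mod}$ to $S_q(r,r)\text{-mod}$. The image of $S_q(r,r)\text{-mod}$ in $U_\zeta\text{-mod}$ is the full subcategory of modules killed by the two-sided kernel of $U_\zeta\twoheadrightarrow S_q(r,r)$, hence a Serre subcategory, closed under submodules, quotients and subquotients. As $\Delta_\zeta(\bar\lambda)=\Delta_q(\lambda)$ already lies in it, so does each $F^i$, and $F^\bullet$ is a filtration of $\Delta_q(\lambda)$ by $S_q(r,r)$-submodules whose sections are semisimple $S_q(r,r)$-modules, with every composition factor an $L_q(\nu)$, $\nu\in\Lambda^+(r)$, and the same multiplicities as before; this is (a). (For $\nu$ outside the $W_e$-orbit of $\lambda$ both the asserted multiplicity and the relevant coefficient of $Q_{w_{\bar\nu},w_{\bar\lambda}}$ vanish, by the linkage principle.)

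For (b) I would apply the contravariant Schur functor $(-)^\diamond\colon S_q(n,r)\text{-mod}\to\text{mod-}H$, which is exact precisely because $r\le n$ (the point at which that hypothesis enters; see \cite{CPS4}), and which by \eqref{modules} and \eqref{irreducible modules} sends $\Delta_q(\lambda)$ to $S^\lambda$ and $L_q(\nu)$ to $D^\Psi_{\nu'}$ when $\nu\in\Lambda^+_{\mathrm{res}}(r)$, and to $0$ otherwise. Applying $(-)^\diamond$ to the filtration from (a), exactness and contravariance turn $F^0\supseteq\cdots\supseteq F^m=0$ into a chain of surjections $S^\lambda=(F^0)^\diamond\twoheadrightarrow(F^1)^\diamond\twoheadrightarrow\cdots\twoheadrightarrow(F^m)^\diamond=0$; putting $G^i(\lambda):=\ker\bigl(S^\lambda\twoheadrightarrow(F^i)^\diamond\bigr)$ gives an increasing filtration $0=G^0\subseteq G^1\subseteq\cdots\subseteq G^m=S^\lambda$ with $G^{i+1}/G^i\cong(F^i/F^{i+1})^\diamond$. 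An exact contravariant functor commutes with finite direct sums and sends irreducibles to irreducibles or $0$, so $G^{i+1}/G^i$ is a semisimple $H$-module, namely $\bigoplus_{\nu\in\Lambda^+_{\mathrm{res}}(r)}(D^\Psi_{\nu'})^{\oplus d_{i,\nu}}$ with $d_{i,\nu}=[F^i/F^{i+1}:L_q(\nu)]$ the coefficient of $t^{\,l(w_{\bar\lambda})-l(w_{\bar\nu})-i}$ in $Q_{w_{\bar\nu},w_{\bar\lambda}}$ from (a); this is (b).

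The conceptual input is entirely Theorem \ref{quantum}(a); the rest is bookkeeping, and the step I expect to demand the most care is the combinatorial matching. One must verify that the affine Weyl group $W(\lambda)$ at the relevant level identifies, via Proposition \ref{prop7.1} and Tanisaki's explicit description of $W(\lambda)$ and its dot action, with the affine Weyl group $W_e$ of type $A_{r-1}$; that the minimal-length elements $w_{\bar\lambda}$ and the Coxeter length function used in Theorem \ref{quantum}(a) coincide with those named in the present statement (up to the convention change between the dot action on $C^-$ and the linear action on $\overline{C^-}$); and that the passage from general $n\ge r$ to $n=r$ respects standard and costandard modules, irreducibles, and the labelling $\lambda\mapsto\bar\lambda$. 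With those identifications in hand, (a) and (b) are each a short argument about Serre subcategories and exact functors.
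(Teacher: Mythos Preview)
Your approach is essentially the same as the paper's: part (a) is a direct transport of Theorem \ref{quantum}(a) through the surjection $U_\zeta\twoheadrightarrow S_q(n,r)$, and part (b) follows by applying the exact contravariant Schur functor $(-)^\diamond$ to the filtration from (a).

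One point of comparison worth noting: for the exactness of $(-)^\diamond=\Hom_{S_q(r,r)}(-,T)$ you cite \cite{CPS4} and the hypothesis $r\leq n$, whereas the paper argues this directly and intrinsically. It observes that $T=T(r,r)\cong S_q(r,r)f$ for an idempotent $f$, so $T$ is projective; and that $T$ is a tilting module (hence self-dual), so $T$ is also injective. Injectivity of $T$ is what makes $\Hom_{S_q(r,r)}(-,T)$ exact. This is a nice self-contained justification that you might prefer to an external citation. Also, the paper simply sets $G^i(\lambda)=F^{m-i}(\lambda)^\diamond$; your more careful definition $G^i=\ker\bigl(S^\lambda\to (F^i)^\diamond\bigr)$ (equivalently $(F^0/F^i)^\diamond$) is a cleaner way to say the same thing and makes the identification $G^{i+1}/G^i\cong (F^i/F^{i+1})^\diamond$ transparent. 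Finally, your reduction to $n=r$ in part (a) is reasonable (the paper simply calls (a) a ``translation''), though note that the theorem statement itself, with its reference to $W_e$ of type $A_{r-1}$ and to $\nu\in\Lambda^+(r)$, already presupposes $n\geq r$.
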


\begin{proof} (a) is merely a translation into the language of $q$-Schur algebras of Theorem \ref{quantum}(a). 

As for
(b), we can take $n=r$. We first observe $T:=T(r,r)\cong S_q(n,r)f$ for an idempotent $f\in S_q(r,r)$
\cite[p. 664]{PS1}, and so $T$ is projective.  In addition, $T$ is a tilting module for
$S_q(r,r)$ and is therefore self-dual. See \cite[Thm. 8.4]{DPS1}. Thus, $T$ is also an injective
$S_q(r,r)$-module and so the ``diamond functor"
 $$(-)^\diamond=\Hom_{S_q(r,r)}(-,T): S_q(r,r){\text{\rm --mod}}\to
{\text{\rm mod--}}H$$
 is exact. Hence, (a) implies (b), putting $G^i(\lambda)=F^{m-i}(\lambda)^\diamond$.
\end{proof}

\subsection{Open questions} We raise some open questions.

\medskip
\begin{ques} Given $\lambda\in\Lambda^+(r)$, when is it true that the filtration described in the proof of Theorem \ref{Schur and Hecke}(b) is the socle
filtration of $S^\lambda$?  One should at least assume that $\lambda$ is restricted, and the case
where $\bar\lambda$ is regular in the sense of alcove geometry is already interesting.
\end{ques}

\begin{ques}\label{question two} When is there a positive grading on $H$ (with grade 0 semisimple) such
that for each
$\lambda\in\Lambda^+(r)$ there is a graded $H$-module structure
on $S^\lambda$, so that the multiplicities of irreducible $H_0\cong H/\rad H$-modules in each grade are as predicted by Theorem \ref{Schur and Hecke}(b)?   The same question may be asked for the quotient algebras $H(n,r)$ defined in
\cite{DPS1} and for the $H(n,r)$-modules $S^\lambda$, $\lambda\in\Lambda^+(n,r)$.
\end{ques}

\begin{ques}In  \cite{BKW}, a $\mathbb Z$-grading on Specht modules is given with
respect to a $\mathbb Z$-grading of the Hecke algebra. Since this grading is not, in general,
a positive grading with the grade 0 term a semisimple algebra, individual grades of a given graded module are not necessarily semisimple
modules.  Nevertheless, it appears from the form of the graded multiplicities in \cite{BKW}, together
with \cite{VV}, that these multiplicities are the same coefficients 
which appear in our Theorem \ref{Schur and Hecke}(b).  The question, therefore, arises as to when it is possible to ``regrade"
the Hecke algebra $H$ (shifting grades of projective indecomposable summands and passing to
an endomorphism algebra) to achieve a positively
graded algebra with grade 0 term semisimple in such a way that the induced regradings of the Specht modules agree with our filtration sections as in Question \ref{question two}. When this is possible, it answers Question \ref{question two} in a very specific way. \end{ques}

\begin{ques} For $\lambda\in\Lambda^+(n,r)$, when is the filtration for $\Delta_q(\lambda)$ described in Theorem \ref{Schur and Hecke}(a)
given by the radical series? The same question can be asked in all types; \cite[Thm. 8.4(c)]{PS6} gives a positive answer for regular highest weights. Lin \cite[Rem. 2.9(1)]{Lin} suggests a positive
answer in the singular case, at least  for generic weights.\end{ques}

\begin{ques}\label{question4} When is there a positive grading on $S_q(n,r)$ (with grade 0 semisimple) and on the
standard modules $\Delta_q(n,r)$ so that the grade $i$ section multiplicities are predicted by those in Theorem \ref{Schur and Hecke}(a) (for all $i$)? 
From the general theory of graded quasi-hereditary algebras \cite{PS6}, 
 if $S_q(n,r)$ has a positive grading, its standard modules will automatically be
graded. \end{ques}

\begin{ques}In \cite{Ariki}, Ariki gives a $\mathbb Z$-grading on $S_q(n,r)$
and the standard modules under mild restrictions on $e$. One can ask when some regrading process
in this case serves to give a positive question in Question \ref{question4} above. When $n\geq r$, \cite{Ariki} computes
the multiplicities of graded irreducible modules in his graded standard modules giving an answer
involving (inverse) Kazhdan-Lusztig polynomials. Is there some positive regrading of the grading in \cite{Ariki} possible
so that the  multiplicities in each grade agree with those in Theorem \ref{Schur and Hecke}(a)? \end{ques}

\begin{ques} When is $\gr S_q(n,r)$ a quasi-hereditary algebra? When is
it Koszul? One can also ask when $\gr S_q(n,r)$ has a Kazhdan-Lusztig theory in the
the sense of \cite{CPS1}, though it should be stated that the same question is open for singular
blocks of $S_q(n,r)$ itself.  \end{ques}
 
 Of course, all the above questions for $q$-Schur algebras can be asked in other types, i.~e., for generalized $q$-Schur
 algebras.

\subsection{Positive characteristic}  Now assume that $k$ is an algebraically closed field of
positive characteristic $p$. For positive integers $n,r$, let $S(n,r)=S_1(n,r)$ be the classical Schur algebra
over $k$ of bidegree $(n,r)$; see \cite{Green} for a detailed discussion in this special case. The irreducible $S(n,r)$-modules $L(\lambda)$ are indexed
by partitions $\lambda\in\Lambda^+(n,r)$. 

Form the PID $\sZ'={\mathbb Z}_{(p)}[\zeta]$, where 
$q:=\zeta^2=\sqrt[p]{1}$.
The $q$-Schur algebra $S_q(n,r)$, taken over
${\mathbb Q}(\zeta)$, has a standard integral $\sZ'$-form
$S_q(n,r)'$ such that $S(n,r)=S_q(n,r)'\otimes_{\sZ'}k$.
For $\lambda\in \Lambda^+(n,r)$, choose a $S(n,r)'$-stable $\sZ'$-lattice $L_q(\lambda)'$ in $L_q(\lambda)$, and
let $\overline{L_q(\lambda)}=L_q(\lambda)'\otimes_{\sZ'} k$ be the $S(n,r)$-module obtained by base change to $k$. The following is a
special case of a conjecture of James \cite{James}.

\begin{conj}\label{James} (James Conjecture, defining characteristic case) If $p^2>r$, $\overline{L_q(\lambda)}\cong L(\lambda)$
for all $\lambda\in\Lambda^+(n,r)$. \end{conj}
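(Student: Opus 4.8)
The plan, so far as one can be sketched for what is a special case of a celebrated and (as stated) still open conjecture of James, is to treat this as a comparison of decomposition theories through the common integral form. One has $S(n,r)=S_q(n,r)'\otimes_{\sZ'}k$ with $\sZ'=\Z_{(p)}[\zeta]$, whose residue field $k$ has characteristic $p$ and whose fraction field has characteristic $0$; reduction modulo the maximal ideal gives a decomposition map on Grothendieck groups, and, writing $[\overline{L_q(\nu)'}]=\sum_\mu d_{\nu\mu}[L(\mu)]$, the James assertion is precisely that the adjustment matrix $d=(d_{\nu\mu})$ is the identity. First I would reduce to Weyl modules: the standard module $\Delta_q(\lambda)$ admits an $S_q(n,r)'$-lattice reducing to the classical Weyl module $\Delta(\lambda)$ (both carry the same, characteristic-free, Weyl character), so by the standard relation $D^{\mathrm{cl}}=D^q\cdot d$ between the two unitriangular decomposition matrices it is enough to show $[\Delta(\lambda):L(\mu)]=[\Delta_q(\lambda):L_q(\mu)]$ for all $\lambda,\mu\in\Lambda^+(n,r)$.

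The quantum side of that equality is under control. By the theorems of Kazhdan--Lusztig, Lusztig, Ariki and Varagnolo--Vasserot --- the same inputs used in \S7 through the equivalence $F_\ell$ --- the number $[\Delta_q(\lambda):L_q(\mu)]$ is the value at $1$ of a parabolic affine Kazhdan--Lusztig polynomial for the affine Weyl group $W_p$ of type $A_{r-1}$. The remaining task is therefore to prove that the modular number $[\Delta(\lambda):L(\mu)]$ equals that same polynomial whenever $r<p^2$. One route: equip the classical Weyl module with the $p$-adic Jantzen filtration of a contravariant form defined over $\sZ'$, match its sum formula against the $\zeta$-adic quantum Jantzen sum formula, and show that in the range $r<p^2$ no extra terms survive --- equivalently, that the relevant $\Z_{(p)}$-modules of homomorphisms between integral Weyl modules are torsion-free. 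Another route is to prove Lusztig's conjecture for $SL_n$ on just the bounded (roughly, first two $p$-alcoves) set of weights that can occur when $r<p^2$, and then pass to the principal block by linkage and translation.

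The hard part --- indeed the reason the statement is a conjecture, and the point at which, in the wake of Williamson's torsion-explosion examples, this optimistic matching is now known to fail --- is exactly this last step: controlling $p$-torsion in the bounded region. All known proofs of Lusztig's conjecture (Andersen--Jantzen--Soergel, Fiebig, and the geometric parity-sheaf arguments of Riche--Williamson and others) require $p$ to exceed a bound on the rank growing far faster than $\sqrt r$, so they say nothing in the James range; a genuine proof would need some new input, such as showing that parity sheaves on the relevant affine Grassmannian restrict from characteristic $0$ to characteristic $p$ over the bounded stratum in play. Absent such an input the statement must be taken, as it is throughout \S8.3 of this paper, as a working hypothesis.
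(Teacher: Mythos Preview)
The statement is labeled in the paper as a \emph{Conjecture}, not a theorem, and the paper makes no attempt to prove it; rather, it is explicitly assumed as a hypothesis in the subsequent theorem of \S8.3. There is therefore no ``paper's own proof'' to compare against. Your proposal correctly recognizes this: you sketch what a proof would have to accomplish (matching modular and quantum decomposition numbers via the integral form, or equivalently showing the adjustment matrix is trivial), and you correctly identify where every known approach breaks down in the James range $r<p^2$.

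One update worth recording: your closing paragraph treats the statement as open but precarious. In fact, subsequent to this paper, Williamson's torsion-explosion examples produced explicit counterexamples to the James Conjecture (in both the defining and non-defining characteristic formulations), so the statement is now known to be false in general. Your instinct that the ``hard part'' is genuinely obstructed, not merely unproved, is therefore vindicated. For the purposes of this paper, however, the conjecture functions purely as a hypothesis, and no proof is expected or supplied.
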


The (full) James conjecture has been verified for all $r\leq 10$ \cite{James}.\footnote{An advantage of using the James conjecture (over
the Lusztig conjecture) is that it does not require that $p\geq n$.} It is also known that the James conjecture holds, for a fixed $n$ and all $r$, provided that $p$ is sufficiently large.The conjecture is trivial unless $p\leq r$, so that $r$ must grow with $p$ for the conclusion to be substantive.

For $\lambda\in\Lambda^+(n,r)$, let $\bar\lambda$ be the dominant weight for $SL_n$ determined
by $\lambda$. Let $w_{\bar\lambda}$ be the unique element $x$ in the affine Weyl group $W_p$
such that $x\cdot\bar\lambda\in C^-$, the anti-dominant chamber. (See the discussion two paragraphs
above Theorem \ref{Schur and Hecke}.)
 In the same spirit, but motivated by \cite{FT}, \cite[Thm. 6.3]{CT}, and, especially, the notion of
 an abstract Kazhdan-Lusztig theory given in \cite{CPS1}, we conjecture
the following.

\begin{conj} \label{bipartite} (Schur Algebra Bipartite Conjecture, explicit form) For $p^2>r$, the $\Ext^1$-quiver of $S(n,r)$ is a bipartite graph.
Explicitly, the decomposition
$$E=\{\lambda\,|\, \,l(w_{\bar\lambda})\in 2{\mathbb Z}\},\quad O=\{\lambda\,|\, l(w_{\bar\lambda})\in 2{\mathbb Z}+1\,\}
$$
of $\Lambda^+(n,r)$ is compatible with the bipartite $\Ext^1$-quiver.
\end{conj}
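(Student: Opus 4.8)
The plan is to realise the conjecture as a consequence of an abstract Kazhdan--Lusztig theory, in the sense of \cite{CPS1}, for the blocks of $S(n,r)$. Recall that if a highest weight category carries a Kazhdan--Lusztig theory relative to a length function $\ell$, then $\Ext^1(L(\lambda),L(\mu))\neq 0$ forces $\ell(\lambda)\not\equiv\ell(\mu)\pmod 2$; applied with $\ell(\lambda)=l(w_{\bar\lambda})$ this is exactly the assertion that the $\Ext^1$-quiver of $S(n,r)$ is bipartite with the displayed partition $\Lambda^+(n,r)=E\sqcup O$. Since a Kazhdan--Lusztig theory pins down the length function, it is the right tool for the \emph{explicit} form of the conjecture, not merely bipartiteness. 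So it suffices to equip each block of $S(n,r)$ (for $p^2>r$) with such a theory, and I would do this by transporting the problem to characteristic zero.

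First I would use the standard integral form $S_q(n,r)'$ over $\sZ'=\mathbb Z_{(p)}[\zeta]$, with $S(n,r)=S_q(n,r)'\otimes_{\sZ'}k$ and $S_q(n,r)=S_q(n,r)'\otimes_{\sZ'}\mathbb Q(\zeta)$, where $q=\zeta^2$ is a primitive $p$-th root of $1$, so that $e=p$ in the notation of \S7. The Weyl modules $\Delta_q(\lambda)'$ are $\sZ'$-free and base-change correctly on both sides, and Conjecture \ref{James} says the irreducibles do not split on reduction mod $p$, so that $\overline{L_q(\lambda)}\cong L(\lambda)$ and the decomposition matrices of $S(n,r)$ and $S_q(n,r)$ coincide. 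Comparing a $\sZ'$-free projective resolution over $S_q(n,r)'$ after the two base changes yields universal-coefficient sequences relating $\Ext^i_{S(n,r)}(L(\lambda),L(\mu))$ to $\Ext^i_{S_q(n,r)'}(L_q(\lambda)',L_q(\mu)')\otimes_{\sZ'}k$ and to the $\sZ'$-torsion of $\Ext^{i+1}_{S_q(n,r)'}$; in particular, provided the integral groups $\Ext^1_{S_q(n,r)'}$ and $\Ext^2_{S_q(n,r)'}$ are $\sZ'$-torsion-free, $\Ext^1_{S(n,r)}(L(\lambda),L(\mu))$ vanishes exactly when its characteristic-$0$ analogue $\Ext^1_{S_q(n,r)}(L_q(\lambda),L_q(\mu))$ does.

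On the characteristic-$0$ side one is then in the situation of \S\S5--7. For $e=p\geq h=n$ and $\bar\lambda$ regular, Theorems \ref{theorem7.3} and \ref{quantum}(b) show that $\gr B_\Gamma$ is Koszul with a graded Kazhdan--Lusztig theory for the Coxeter length $l(w_{\bar\lambda})$, so by \cite{CPS1} the $\Ext^1$-quiver of $S_q(n,r)$ is bipartite with exactly the required parity. For $e<h$ (all weights singular) and for singular weights generally, Theorems \ref{quantum}(a) and \ref{Schur and Hecke}(a) supply only a semisimple series for $\Delta_q(\lambda)$ with section multiplicities read off from inverse Kazhdan--Lusztig polynomials $Q_{w_{\bar\nu},w_{\bar\lambda}}$; to conclude one would need this series to be the radical series --- equivalently, the missing singular Kazhdan--Lusztig theory of \S7 --- which the paper notes is expected but not established.

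Thus the argument has two genuinely open links, and I expect both to be hard. First, the Kazhdan--Lusztig theory for singular blocks of the affine (hence quantum) category: upgrading the semisimple series of Theorem \ref{affineLiealgebramult} to the radical series, so that $\Ext^1$ between simples of equal Coxeter-length parity vanishes already in characteristic $0$. Second, and more seriously, the ``no mod-$p$ torsion'' statement needed for the transfer: that $\Ext^1_{S_q(n,r)'}$ and $\Ext^2_{S_q(n,r)'}$ between irreducibles of equal Coxeter-length parity are $\sZ'$-free. This cannot be bypassed by the James conjecture alone: by upper semicontinuity $\dim\Ext^1$ can only grow on reduction mod $p$, and new extensions between equal-parity simples in $S(n,r)$ would be invisible to the decomposition matrix. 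The natural route to the second point is to exhibit an integral Koszul (or at least graded quasi-hereditary) model of $S_q(n,r)'$ reducing well mod $p$, in the spirit of \cite{CPS1} and the graded constructions of \S7; I regard establishing such an integral model as the principal obstacle.
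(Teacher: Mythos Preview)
The statement you are attempting to prove is labelled a \emph{Conjecture} in the paper (the Schur Algebra Bipartite Conjecture), and the paper does not contain a proof of it. It is introduced alongside the James Conjecture as an explicit \emph{assumption} in Theorem~8.3.3, which derives consequences for semisimple series of Weyl and Specht modules conditional on both conjectures. So there is no ``paper's own proof'' to compare your proposal against.

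Your write-up is better read as a strategy for attacking the conjecture than as a proof, and you acknowledge as much in your final paragraph. The two obstructions you isolate are genuine: (i) the paper explicitly leaves open whether the semisimple series in the singular case is the radical series (see Questions~8.2.4 and~8.2.7, and the remarks after Theorem~\ref{affineLiealgebramult}), so the characteristic-zero Kazhdan--Lusztig theory you need is not available from this paper; and (ii) the torsion-freeness of the integral $\Ext$-groups is not a consequence of the James Conjecture and would require independent work. Your observation that new equal-parity extensions could arise on reduction mod $p$ without being detected by the decomposition matrix is exactly why the authors state the Bipartite Conjecture as a separate hypothesis rather than deriving it from the James Conjecture.

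In short: there is nothing to compare here, and your proposal correctly identifies why the statement remains conjectural.
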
 

The first sentence  in Conjecture (\ref{bipartite})  means that $\Lambda^+(n,r)$ decomposes into a disjoint union $\Lambda^+(n,r)=E\,\,\cup \,\,O$
(the ``even" and the ``odd" partitions) such that 
$\Ext^1_{S(n,r)}(L(\lambda),L(\nu))\not=0$, whenever $\lambda,\nu\in\Lambda^+(n,r)$ are either both in $E$ or both
in $O$. The second sentence provides explicit $E$ and $O$.

 \begin{thm} Consider the Schur algebra $S(n,r)$ in positive characteristic $p$. Assume that $p^2>r$
 and that the James Conjecture \ref{James} and the Bipartite Conjecture \ref{bipartite} are true. 
 
 (a) For $\lambda\in\Lambda^+(n,r)$, let $\{F^i(\lambda)\}$ be the semisimple series of $\Delta_q(\lambda)$, $q=\zeta^2=\sqrt[p]{1}$, given in Theorem \ref{Schur and Hecke}. Let $\sL$ be an $S_q(n,r)'$-stable $\sZ'$-lattice in $\Delta_q(\lambda)$. Set
  $F^i(\lambda)'=F^i(\lambda)\cap\sL$ and 
 $\overline{F^i(\lambda)}=F^i(\lambda)'\otimes_{\sZ'}k$. Then
 $\{\overline{F^i(\lambda)}\}$ is a semisimple series in $\Delta(\lambda)$. Furthermore, for $\mu\in\Lambda^+(n,r)$,
 \begin{equation}\label{compositionfactors}
 [\overline{F^i(\lambda)}/\overline{F^{i+1}(\lambda)}:L(\mu)]=[F^i(\lambda)/F^{i+1}(\lambda):L_q(\mu)].
 \end{equation}
 In particular, this multiplicity is given by a coefficient in an inverse Kazhdan-Lusztig polynomial, as in
 Theorem \ref{Schur and Hecke}. 
 
 (b) For $\lambda\in\Lambda^+(r)$, the Specht module $S^\lambda$ for ${\mathfrak S}_r$ has
 a filtration $0=G^0(\lambda)\subseteq G^1(\lambda)\subseteq\cdots\subseteq G^m(\lambda)$, in
 which, given $\mu\in\Lambda^+_{\text{\rm res}}$, the multiplicity of the irreducible ${\mathfrak S}_r$-module
 $D_{\mu'}^\Psi$ in the section $G^{i+1}(\lambda)/G^i(\lambda)$ is the coefficient of
 $t^{l(w_{\bar\lambda})-l(w_{\bar\mu})-i}$ in the inverse Kazhdan-Lusztig polynomial $Q_{w_{\bar\mu},w_{\bar\lambda}}$ associated to the affine Weyl group $W_e$ of type $A_{r-1}$.
 \end{thm}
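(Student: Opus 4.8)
The plan is to obtain both parts by base-changing the characteristic-zero semisimple series of Theorem~\ref{Schur and Hecke} along the reduction map attached to the standard integral form $S_q(n,r)'$, using the James Conjecture to pin down the composition factors in characteristic $p$ and the Bipartite Conjecture to force the reduced sections to remain semisimple. Part~(b) is then deduced from part~(a) by the exact Schur (``diamond'') functor, exactly as in the proof of Theorem~\ref{Schur and Hecke}(b).

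For (a): the ring $\sZ'={\mathbb Z}_{(p)}[\zeta]$ is a semilocal Dedekind domain, hence a PID, with residue field $k$ and fraction field containing ${\mathbb Q}(\zeta)$. Fix the semisimple series $\Delta_q(\lambda)=F^0(\lambda)\supseteq\cdots\supseteq F^m(\lambda)=0$ supplied by Theorem~\ref{Schur and Hecke}(a); concretely this is the series built in Theorem~\ref{affineLiealgebramult}, transported through the equivalence $F_\ell$ from a radical filtration of a regular affine Weyl module and then translated to the facet of $\bar\lambda$. I would note $\overline\sL:=\sL\otimes_{\sZ'}k\cong\Delta(\lambda)$ (as for any Weyl-module lattice, or simply take $\sL$ to be the standard integral form) and set $F^i(\lambda)'=F^i(\lambda)\cap\sL$. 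Each $F^i(\lambda)'$ is a full-rank $S_q(n,r)'$-stable $\sZ'$-sublattice of $\sL$, and $F^{i+1}(\lambda)'$ is saturated in $F^i(\lambda)'$ since the quotient embeds in the torsion-free module $F^i(\lambda)/F^{i+1}(\lambda)$; hence $0\to F^{i+1}(\lambda)'\to F^i(\lambda)'\to F^i(\lambda)'/F^{i+1}(\lambda)'\to 0$ is a split exact sequence of free $\sZ'$-modules, stays exact after $\otimes_{\sZ'}k$, and so the $\overline{F^i(\lambda)}$ form a genuine filtration of $\Delta(\lambda)$ with $\overline{F^i(\lambda)}/\overline{F^{i+1}(\lambda)}\cong(F^i(\lambda)'/F^{i+1}(\lambda)')\otimes_{\sZ'}k$, the reduction of a full lattice in the semisimple module $F^i(\lambda)/F^{i+1}(\lambda)=\bigoplus_\nu L_q(\nu)^{\oplus m_{i,\nu}}$ (with $m_{i,\nu}$ the coefficient of $t^{l(w_{\bar\lambda})-l(w_{\bar\nu})-i}$ in $Q_{w_{\bar\nu},w_{\bar\lambda}}$, by Theorem~\ref{Schur and Hecke}(a)). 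Lattice-independence of decomposition numbers (Brauer--Nesbitt) together with the James Conjecture~\ref{James}, which gives $\overline{L_q(\nu)}\cong L(\nu)$ since $p^2>r$, then identifies the class of this section in the Grothendieck group as $\sum_\nu m_{i,\nu}[L(\nu)]$; this already yields the multiplicity formula~(\ref{compositionfactors}) and its Kazhdan--Lusztig reading.

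The crux of (a) — and the step I expect to be the main obstacle — is showing that each section $\overline{F^i(\lambda)}/\overline{F^{i+1}(\lambda)}$ is \emph{semisimple} over $S(n,r)$, which the multiplicity count alone does not give. I would handle it by importing a parity property from the affine/Koszul picture. Since the weight $\lambda'$ in the construction of Theorem~\ref{affineLiealgebramult} may be taken regular, the section $\rad^iM(\mu')^+/\rad^{i+1}M(\mu')^+$ lives in radical degree $i$ of a Koszul algebra carrying a graded Kazhdan--Lusztig theory with respect to the Coxeter length (Theorem~\ref{theorem7.3}(b)); by the parity half of that theory all its irreducible constituents $L(\nu')$ satisfy $l(w_{\nu'})\equiv l(w_{\mu'})+i\pmod 2$. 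The exact translation functor $T^{\lambda'}_\lambda$ preserves these lengths (Remark~\ref{minimal length}) and $F_\ell$ matches the Coxeter lengths of $W(\lambda)$ with those in $W_e$ (Proposition~\ref{prop7.1} and the surrounding discussion), so every constituent $L_q(\nu)$ of $F^i(\lambda)/F^{i+1}(\lambda)$ has $l(w_{\bar\nu})$ of one fixed parity, namely $\equiv l(w_{\bar\lambda})+i\pmod 2$. By the previous paragraph the constituents of $\overline{F^i(\lambda)}/\overline{F^{i+1}(\lambda)}$ are the same simple modules $L(\nu)$, so they all lie in a single block $E$ or $O$ of the bipartition of $\Lambda^+(n,r)$. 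The Bipartite Conjecture~\ref{bipartite} then forces $\Ext^1_{S(n,r)}(L(\nu),L(\nu''))=0$ for every pair of constituents of the section, and a module whose composition factors are pairwise (and self-) non-extending is semisimple. (One does not claim this is the radical filtration, even for regular $\bar\lambda$, since the relevant homological data in characteristic $p$ is not available.)

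For (b), take $n=r$. Over $k$ the module $T=T(r,r)$ is $\cong S(r,r)f$ for an idempotent $f$ (base change of \cite[p.~664]{PS1}) and is a tilting, hence self-dual, module (base change of \cite[Thm.~8.4]{DPS1}); being projective and self-dual it is also injective, so the contravariant Schur functor $(-)^\diamond=\Hom_{S(r,r)}(-,T)$ is exact and carries each irreducible to an irreducible or to $0$, with $L(\mu)^\diamond\cong D^\Psi_{\mu'}$ for $\mu\in\Lambda^+_{\text{\rm res}}(r)$ and $L(\mu)^\diamond=0$ otherwise (the classical description, cf.~(\ref{irreducible modules})). Applying $(-)^\diamond$ to the filtration of $\Delta(\bar\lambda)$ from part~(a) and setting $G^j(\lambda)=\Ker\bigl(S^\lambda=\Delta(\bar\lambda)^\diamond\twoheadrightarrow\overline{F^j(\lambda)}^\diamond\bigr)$ gives an increasing filtration $0=G^0(\lambda)\subseteq\cdots\subseteq G^m(\lambda)=S^\lambda$ with $G^{i+1}(\lambda)/G^i(\lambda)\cong\bigl(\overline{F^i(\lambda)}/\overline{F^{i+1}(\lambda)}\bigr)^\diamond$; each such section is semisimple (image of a semisimple module under $(-)^\diamond$), and the multiplicity of $D^\Psi_{\mu'}$ in it equals the multiplicity of $L(\mu)$ in $\overline{F^i(\lambda)}/\overline{F^{i+1}(\lambda)}$, that is, the coefficient of $t^{l(w_{\bar\lambda})-l(w_{\bar\mu})-i}$ in $Q_{w_{\bar\mu},w_{\bar\lambda}}$, as asserted.
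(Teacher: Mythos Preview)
Your proposal is correct and follows essentially the same route as the paper: use the James Conjecture to identify the composition factors of each reduced section, establish that all constituents of a fixed section have the same length parity, and then invoke the Bipartite Conjecture to conclude semisimplicity; part~(b) is deduced from~(a) via the exact diamond functor exactly as in Theorem~\ref{Schur and Hecke}(b).

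The only noteworthy difference is in how the parity is obtained. You extract it from the Koszul/graded Kazhdan--Lusztig structure of Theorem~\ref{theorem7.3}(b), tracking lengths through translation and the equivalence $F_\ell$. The paper instead reads the parity off directly from the inverse Kazhdan--Lusztig polynomials themselves: since each $Q_{y,x}$ is a polynomial in $t^2$, a nonzero coefficient of $t^{l(w_{\bar\lambda})-l(w_{\bar\nu})-i}$ forces $l(w_{\bar\lambda})-l(w_{\bar\nu})-i$ to be even, so any two constituents $L(\tau),L(\sigma)$ of the $i$th section satisfy $l(w_{\bar\tau})\equiv l(w_{\bar\sigma})\pmod 2$. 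This is a shorter and more self-contained argument---it avoids re-invoking the affine construction---and you might prefer it; but your structural route is also valid and makes the connection to the Koszul grading explicit.
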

 
 \begin{proof} Since  Conjecture \ref{James} is assumed to hold, a section $F^i(\lambda)/F^{i+1}(\lambda)$
 reduces mod $p$ to an $S(n,r)$-module whose composition factor multiplicities are given by
 (\ref{compositionfactors}). To check that  $\{\overline{F^i(\lambda)}\}$ is a semisimple series  $\Delta(\lambda)$,
 it suffices to check that these sections are semisimple. But, if $L(\tau)$ and $L(\sigma)$ both appear
 in a composition series in $\overline{F^i(\lambda)}/\overline{F^{i+1}(\lambda)}$, then the coefficients of
 $t^{l(w_{\bar\lambda})-l(w_{\bar\tau})-i}$ in $Q_{w_{\bar\lambda},w_{\bar\tau}}$ and of $t^{l(w_{\bar\lambda})-l(w_{\bar\sigma})
 -i}$ in $Q_{w_{\bar\lambda},w_{\bar\tau}}$ are nonzero. However, these are polynomials in $\q=t^2$, so that
 $$l(w_{\bar\lambda})-l(w_{\bar\tau})-i \equiv l(w_{\bar\lambda})-l(w_{\bar\sigma})-i\,\,\mod 2.$$
 It follows that $w_{\bar\tau}$ and $w_{\bar\sigma}$ have the same parity, so that 
$\overline{F^i(\lambda)}/\overline{F^{i+1}(\lambda)}$ is semisimple by Conjecture \ref{bipartite}, as required for
(a).

Part (b) is proved in the same way as Theorem \ref{Schur and Hecke}(b).
\end{proof}

\begin{rem} James also conjectured \cite{James} a version of Conjecture \ref{James} for $q$-Schur algebras, with $q$ an
$e$th root of 1 and $ep>r$. It is at least reasonable to ask, as a question, if the analog of Conjecture
\ref{bipartite} holds under this assumption. Positive answers (to this question, and to this version of the
James conjecture), would have consequences for the finite general linear groups $G$ in a
non-defining characteristic $p$. See \cite[\S 9]{CPS8}, which 
shows the group algebra of $G$ has a large quotient which is a sum of tensor products of $q$-Schur  at various roots of unity $q$. Another reference is \cite{BDK}.   

For a different (though possibly related)  use of Weyl module filtrations
for (general) finite Chevalley groups, see \cite{Jantzenpaper} and \cite{Pillen}.
\end{rem}

\section{Appendix I} 
\numberwithin{equation}{section}
\numberwithin{thm}{section}

 If $\nu,\mu\in\sC$, write $\nu\uparrow\mu$ if $\nu=\mu$, or, if there are positive real
 roots $\beta_1,\cdots,\beta_m$ (allowing repetition) such that the corresponding reflections $s_{\beta_i}$
satisfy
\begin{equation}\label{strict}\nu=s_{\beta_m}s_{\beta_{m-1}}\cdots s_{\beta_1}\cdot\mu\leq \cdots \leq s_{\beta_1}\cdot\mu\leq\mu.\end{equation}
(Observe that this forces each $\beta_i\in\Phi(\nu)=\Phi(\mu)$.)
According to a result of Kac-Kazhdan (see \cite[Thm. 3.1]{KT2}), $\nu\uparrow\mu$ if and only if
$[M(\mu):L(\nu)]\not=0$.  Here is another equivalence:

\begin{prop}\label{First prop of appendix} Suppose that $\lambda\in\sC^-$ and that $\mu,\nu\in [\lambda]$.  Write
$\nu=y\cdot\lambda$ and $\mu=w\cdot\lambda$ with $y,w\in W(\lambda)$ of minimal length.
Then $\nu\uparrow\mu$ if and only if $y\leq w$ in the Bruhat-Chevalley order on $W(\lambda)$.
  \end{prop}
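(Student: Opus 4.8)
The plan is to prove the two implications separately, relying on the Kac--Kazhdan characterization $\nu\uparrow\mu \iff [M(\mu):L(\nu)]\neq 0$ quoted just above the proposition. First I would dispose of the easy direction. Suppose $y\leq w$ in the Bruhat--Chevalley order on $W(\lambda)$. Since $\lambda\in\sC^-$ is anti-dominant for $\Phi(\lambda)$, the length function $\ell$ on $W(\lambda)$ is ``compatible'' with $\leq$ on $[\lambda]$ in the sense that $w_\mu$ is the minimal-length element carrying $\lambda$ to $\mu$; one then wants the standard fact (a Verma-module BGG-type statement, valid in the affine setting by \cite[Thm.~3.1]{KT2}, i.e.\ by Kac--Kazhdan) that $[M(w\cdot\lambda):L(y\cdot\lambda)]\neq 0$ whenever $y\leq w$. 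A clean way to get this is via a reduced word $w = s_{i_1}\cdots s_{i_\ell}$: each subword gives an element $y'\leq w$, and iterating the embeddings $M(s_\beta\cdot\sigma)\hookrightarrow M(\sigma)$ along roots $\beta$ with $\langle \sigma+\rho,\beta^\vee\rangle\in\mathbb Z_{>0}$ produces a chain $\nu = y\cdot\lambda \uparrow \cdots \uparrow w\cdot\lambda = \mu$ realizing the subword, so $\nu\uparrow\mu$ directly from the definition \eqref{strict}. The only care needed is that at each stage the relevant real root lies in $\Phi(\lambda)$ and pairs positively with the current weight plus $\rho$; this holds because $\lambda\in\sC^-$, so all dot-orbit elements lie in $\lambda + Q^+$ and the coroot pairings have the right sign.

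For the harder converse, assume $\nu\uparrow\mu$, so by Kac--Kazhdan $[M(\mu):L(\nu)]\neq 0$, and I must show $y\leq w$. The idea is to induct on $\ell(w)$. If $\ell(w)=0$ then $\mu=\lambda$ is the minimum of $[\lambda]$, forcing $\nu=\lambda$ and $y=1$. For the inductive step pick a fundamental reflection $s=s_\alpha$ ($\alpha\in\Pi(\lambda)$) with $\ell(s w) = \ell(w)-1$; then $\langle w\cdot\lambda + \rho, \alpha^\vee\rangle < 0$ for the corresponding simple root $\alpha$ of $\Phi(\lambda)$ (using $\lambda\in\sC^-$ again), so there is a short exact sequence relating $M(w\cdot\lambda)$ and $M(sw\cdot\lambda)$ --- concretely $M(w\cdot\lambda)\hookrightarrow M(sw\cdot\lambda)$ when the pairing is negative --- and the composition factors of $M(w\cdot\lambda)$ are among those of $M(sw\cdot\lambda)$, together with a parity/length-counting control coming from the structure of the rank-one subalgebra attached to $\alpha$. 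Splitting into the cases $\langle \nu+\rho,\alpha^\vee\rangle \lessgtr 0$ and using the Jantzen-type or Verma-submodule bookkeeping one concludes that either $y\leq sw$ or $sy\leq sw$ with $\ell(sy)<\ell(y)$; the subword/exchange property of the Coxeter group $W(\lambda)$ then yields $y\leq w$ in either case.

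The main obstacle I anticipate is making the inductive step in the converse genuinely rigorous in the affine, possibly singular, setting: one needs the precise Verma-embedding and composition-series statements for $M(\mu)$ with $\mu\in[\lambda]$, $\lambda\in\sC^-$, and the interplay of $\uparrow$ with fundamental reflections of $W(\lambda)$ rather than reflections of the full affine Weyl group $W$. Here the hypothesis $\lambda\in\sC^-$ is doing real work: it guarantees $[\lambda]\subseteq \lambda+Q^+$, it pins down the minimal-length representatives $w_\mu$, and it makes the sign of $\langle w\cdot\lambda+\rho,\alpha^\vee\rangle$ agree with whether $\ell(s_\alpha w)$ is larger or smaller than $\ell(w)$. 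Once these ingredients are in place, both directions reduce to the standard translation between the $\uparrow$-order on a regular or singular block and the Bruhat order on the associated Coxeter group, exactly as in the finite-dimensional theory (cf.\ \cite{JanB}), and I would cite \cite{KK}, \cite{KT2} for the affine Verma-module facts rather than reprove them. An alternative, slicker route --- worth mentioning as a remark --- is to combine $[\Delta(\mu):L(\nu)]\neq 0 \iff \nu\uparrow\mu$ with the fact, established in the body of the paper, that $\bO[\lambda]$ is a highest weight category with weight poset $([\lambda],\leq^W)$; then $[\Delta(w\cdot\lambda):L(y\cdot\lambda)]\neq 0$ forces $y\cdot\lambda \leq^W w\cdot\lambda$, i.e.\ $y\leq w$, for one implication, and Kazhdan--Lusztig positivity (or the explicit chain construction above) for the other.
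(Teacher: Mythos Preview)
Your proposal differs from the paper's argument in a key way, and the difference matters. The paper's proof is \emph{purely combinatorial}: it does not invoke Kac--Kazhdan or any Verma module structure at all. For the implication $\nu\uparrow\mu\Rightarrow y\leq w$, the paper simply takes the chain (\ref{strict}) defining $\uparrow$ and observes that each strict step $s_{\beta}\cdot\sigma<\sigma$ forces, since $\lambda\in\sC^-$, that $w^{-1}\beta<0$ in $\Phi(\lambda)$; the strong exchange condition then says $s_{\beta}w$ is obtained from a reduced word for $w$ by deleting a letter, so $s_\beta w<w$ in Bruhat, and one passes to the minimal coset representative and iterates. No induction on $\ell(w)$, no module theory.

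Your inductive approach to this direction has a concrete error. If $s=s_\alpha$ is simple in $\Phi(\lambda)$ and $sw<w$, then $w^{-1}\alpha<0$, hence $\langle w\cdot\lambda+\rho,\alpha^\vee\rangle=\langle\lambda+\rho,(w^{-1}\alpha)^\vee\rangle\geq 0$ (not $<0$) because $\lambda\in\sC^-$. So the embedding goes $M(sw\cdot\lambda)\hookrightarrow M(w\cdot\lambda)$, and knowing $[M(w\cdot\lambda):L(\nu)]\neq 0$ gives no control over whether $L(\nu)$ lies in the submodule. Your case split on $\langle\nu+\rho,\alpha^\vee\rangle$ is not enough to repair this without substantially more structure. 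The ``slicker route'' you propose at the end is circular: the fact that $\bO[\lambda]$ is a highest weight category for the poset $([\lambda],\leq^W)$ (Remark~\ref{natural order remark}(a)) is deduced in the paper \emph{from} this proposition.

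For the direction $y\leq w\Rightarrow\nu\uparrow\mu$, your outline is essentially the paper's: both build a chain by stepping down through elements of intermediate length. The paper is more careful about the singular case, writing each intermediate $x$ as $w'v$ with $v\in W_0(\lambda)$ and $w'$ minimal in its coset, and checking $y\leq w'$ before invoking induction; you should add this bookkeeping.
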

  
  \begin{proof} Suppose that $\nu\uparrow\mu$ and let $s_{\beta_1},\cdots,s_{\beta_m}$ be as above. We 
  can assume that all the inequalities in (\ref{strict}) are strict, i.~e., putting $\mu_0=\mu$ and, for
  $1\leq i\leq m$, $\mu_i=s_{\beta_i}\cdot\mu_{i-1}$ then $\mu_i-\mu_{i-1}=n_i\beta_i$ for some
  $n_i\in{\mathbb Z}^+$. In particular, $\beta_i\in\Phi^+(\lambda)$, for each $i=1,\cdots, m$.
  
  Write a reduced expression $w=t_1\cdots t_u$, where $t_j=s_{\alpha_j}$, for $\alpha_j$ a fundamental
  root of $\Phi(\lambda)$, $j=1,\cdots, u$, $u\in \mathbb N$. (Actually, $u\not=0$ since $y\cdot\lambda
  > \lambda$.)  A standard argument shows that $s_\beta w\cdot\lambda<w\cdot\lambda$ implies
  that  $\beta=t_1\cdots t_{j-1}(\alpha_j)$, for some $j$, $1\leq j\leq u$. Let $w_1$ be of minimal
  length with $w_1\cdot\lambda=s_\beta w\cdot\lambda$. That is, $w_1$ is a distinguished member of the left-coset $s_\beta wW(\lambda)$. Thus, $w_1\leq s_{\beta}w$, while $s_\beta w=t_1\cdots\widehat t_j\cdots
  t_m<w$. So $w_1<w$. Continuing this way, eventually gives $y<w$, as desired.
  
  Next, we start from the assumption $y<w$ and show that $\nu\uparrow \mu$ using induction on
  $l(w)-l(y)$. By \cite[Prop., p. 122]{Hump}, there exists $x\in W(\lambda)$ with $y<x<w$ and
  $l(w)-l(x)=1$. Put $x=w'v$, where $v\in W_0(\lambda)$ and $w'\in xW_0(\lambda)$ is the element
  of shortest length. Then $y=x'v'$ where $x'\leq x$ and $v'\leq v$. The minimality of $y$ implies
  that $y=x'\leq w'$. Possibly, $y=w'$, but, in any case, induction implies that $\nu\uparrow w'\cdot\lambda=
  x\cdot\lambda$. However, $x$ may be obtained from a reduced expression $w=t_1\cdots t_u$ by removing one of the fundamental reflections (with respect to $W(\lambda)$) reflections $t_j$.
  The discussion in the previous paragraph now shows that $x=s_\beta w$ for suitable $\beta\in\Phi^+(\lambda)$ with $s_\beta w\cdot\lambda<w\cdot\lambda$. Thus, $x\cdot\lambda\uparrow w\cdot\lambda\leq
  \mu$, so $v\uparrow x\cdot\lambda\uparrow\mu$, as desired. 
  \end{proof}

\begin{lem}\label{lemma9.2}
Suppose $\lambda \in \mathcal{C}^-_{\rm rat}$, $\mu \in [\lambda]^+$. Then there is a distinguished (i.e., minimal length) $(\overset\circ W, W_0(\lambda))$ double coset representative $d\in W(\lambda)$ with
$$
\mu = w_0 d \cdot \lambda.
$$
The element $d$ is unique; write $d = d(\mu)$.
\end{lem}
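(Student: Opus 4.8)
The plan is to produce $d$ explicitly as $d = w_0 w_\mu$, where $w_\mu \in W(\lambda)$ is the minimal-length element with $w_\mu\cdot\lambda = \mu$ and $w_0$ is the longest element of $\oW$, and then to check that this $d$ is a distinguished $(\oW,W_0(\lambda))$-double coset representative. For the Coxeter combinatorics I would first note that, by Proposition \ref{prop7.1}, $s_{\alpha_1},\dots,s_{\alpha_r}$ are among the fundamental reflections of $W(\lambda)$, so $\oW = \langle s_{\alpha_1},\dots,s_{\alpha_r}\rangle$ is a standard parabolic subgroup of $W(\lambda)$; and since $\lambda\in\sC^-$, the stabilizer $W_0(\lambda)$ is a standard parabolic subgroup as well. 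Hence the usual theory of $(\oW,W_0(\lambda))$-double cosets applies: each double coset has a unique element of minimal length, namely the unique element that is of minimal length both in $\oW d$ and in $d W_0(\lambda)$. Moreover $w_\mu$, being chosen of minimal length with $w_\mu\cdot\lambda = \mu$, is the minimal-length element of $w_\mu W_0(\lambda) = \{w\in W(\lambda):w\cdot\lambda=\mu\}$, so that $\ell(w_\mu v) = \ell(w_\mu)+\ell(v)$ for every $v\in W_0(\lambda)$.

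Next I would show that $\mu\in[\lambda]^+$ forces $w_\mu$ to be of \emph{maximal} length in the coset $\oW w_\mu$. This is the argument already used in the proof of Lemma \ref{important lemma}: for $1\le i\le r$ one has
$$\langle\mu+\rho,\alpha_i^\vee\rangle = \langle\lambda+\rho,\,w_\mu^{-1}(\alpha_i)^\vee\rangle \ge 1 > 0,$$
since $\omu$ is dominant and $\langle\rho,\alpha_i^\vee\rangle = 1$; and, because $\lambda\in\sC^-$, this rules out $w_\mu^{-1}(\alpha_i)\in\Phi^+(\lambda)$, so $w_\mu^{-1}(\alpha_i)<0$ for every $i$. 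As $\alpha_1,\dots,\alpha_r$ are the simple roots of the parabolic $\oW\le W(\lambda)$, this says exactly that $w_\mu$ is the longest element of $\oW w_\mu$. Consequently $d := w_0 w_\mu$ is the shortest element of $\oW w_\mu = \oW d$, with $w_0 d = w_\mu$ and $\ell(w_\mu) = \ell(w_0)+\ell(d)$; in particular $d$ has minimal length in $\oW d$.

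It remains to see that $d$ also has minimal length in $d W_0(\lambda)$. For $v\in W_0(\lambda)$, combining the two length identities above,
$$\ell(w_0)+\ell(d)+\ell(v) = \ell(w_\mu)+\ell(v) = \ell(w_\mu v) = \ell(w_0 d v) \le \ell(w_0)+\ell(dv),$$
whence $\ell(dv)\ge\ell(d)+\ell(v)$; the reverse inequality holds by subadditivity of length, so $\ell(dv) = \ell(d)+\ell(v)$ for all $v\in W_0(\lambda)$. Thus $d$ is of minimal length both in $\oW d$ and in $d W_0(\lambda)$, hence is the distinguished representative of the double coset $\oW w_\mu W_0(\lambda)$, and $\mu = w_\mu\cdot\lambda = (w_0 d)\cdot\lambda$. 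For uniqueness, if $d'$ is another distinguished double coset representative with $(w_0 d')\cdot\lambda = \mu = (w_0 d)\cdot\lambda$, then $(w_0 d)^{-1}(w_0 d') = d^{-1}d' \in W_0(\lambda)$, so $d$ and $d'$ lie in the same coset $d W_0(\lambda)$, hence in the same $(\oW,W_0(\lambda))$-double coset; since distinguished representatives are unique, $d' = d$.

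The main obstacle is not a single hard estimate but keeping the two parabolic structures straight: $w_\mu$ is pinned down by minimality in its $W_0(\lambda)$-coset, whereas dominance of $\omu$ gives it maximality in its $\oW$-coset, and it is precisely the length additivity $\ell(dv) = \ell(d)+\ell(v)$ that lets these two facts be combined to identify $d = w_0 w_\mu$ as genuinely distinguished. The only nontrivial background input is that $\oW$ and $W_0(\lambda)$ are standard parabolic subgroups of $W(\lambda)$, which is what makes available the characterization of minimal-length double coset representatives by simultaneous one-sided minimality.
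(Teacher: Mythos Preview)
Your proof is correct and follows essentially the same approach as the paper's: both exploit the dominance of $\mu$ to obtain the length additivity $\ell(w_0 d)=\ell(w_0)+\ell(d)$, and then conclude that $d$ is simultaneously minimal in $\oW d$ and in $dW_0(\lambda)$. The only organizational difference is that the paper starts by choosing $d$ minimal with $d\cdot\lambda=w_0\cdot\mu$ (so $d$ is already minimal in $dW_0(\lambda)$) and then uses a separating-hyperplane count to get minimality in $\oW d$, whereas you start from $w_\mu$, show it is maximal in $\oW w_\mu$ via $w_\mu^{-1}(\alpha_i)<0$, and then deduce right-coset minimality of $d=w_0w_\mu$ by your length inequality; your step 4 and your uniqueness argument are in fact a bit more carefully written than the paper's version.
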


\begin{proof}
Choose an element $d \in W(\lambda)$ of minimal length with $d\cdot \lambda = w_0 \cdot \mu$. Since $\mu \in [\lambda]^+$, $s_\alpha \cdot \mu < \mu$ for all $\alpha \in \Phi^+$, whereas $s_\alpha \cdot (d\cdot \lambda)= s_\alpha(w_0 \cdot \mu) = s_\alpha(w_0(\mu+\zeta)-\rho) > w_0 \cdot \mu$. Thus $l(w_0d) \geq |\Phi^+l(d)$, by a count of separating  hyperplanes. However, $l(w_0d) \leq l(w_0)+l(d) = |\Phi^+|+l(d)$, and so necessarily $l(w_0d)=l(w_0)+l(d)$.

It follows $d$ has the form $d'w'$ where $d'$ is a distinguished double coset representation of $W_0dW(\lambda)$ and $w' \in W(\lambda)$. Minimality of $l(d)$ gives $d=d'$, as desired.
\end{proof}

In general, $w_0d$ will not be the element $\epsilon\in W(\lambda)$ of minimal length with $\epsilon \cdot \lambda = \mu$. We write $\epsilon = \epsilon(\mu)$ for such an element and note $d(\mu) \leq \epsilon(\mu) \leq w_0d(\mu)$.

\begin{prop}\label{prop9.3}
Suppose $\mu, \nu \in [\lambda]^+$ with $\lambda \in \sC^-_{\rm rat}$. The the following are equivalent:
\begin{itemize}
\item[(a)]
$d(\mu) \leq d(\nu)$;
\item[(b)]
$\epsilon(\mu) \leq \epsilon(\nu)$;
\item[(c)]
$w_0d(\mu) \leq w_0 d(\nu)$.
\end{itemize}

\end{prop}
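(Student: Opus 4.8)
The plan is to establish the chain of equivalences $(a)\Leftrightarrow(c)$ and $(a)\Leftrightarrow(b)$, exploiting the length additivity $l(w_0 d) = l(w_0) + l(d)$ proved in Lemma \ref{lemma9.2}. First I would prove $(a)\Leftrightarrow(c)$. The map $d \mapsto w_0 d$ sends distinguished $(\overset\circ W, W_0(\lambda))$ double coset representatives to elements of maximal length in their right $W_0(\lambda)$-cosets (since $l(w_0 d)=l(w_0)+l(d)$, and similarly $l(w_0 d s) = l(w_0 d) + 1$ or $l(w_0 d) - 1$ can be checked against $l(w_0)$-balancing). The key point is that left multiplication by the longest element $w_0$ of the parabolic $\overset\circ W$ is an \emph{order-reversing} bijection on the distinguished left-coset representatives of $\overset\circ W \backslash W(\lambda)$, but here we are dealing with representatives $d$ that are distinguished on \emph{both} sides of the double coset; the combination with $w_0$ on the left turns a minimal-length double coset representative into a specific element which, restricted to the right $W_0(\lambda)$-coset it lives in, behaves monotonically. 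Concretely, I would argue via the subword/exchange characterization: $d(\mu) \leq d(\nu)$ in Bruhat order iff a reduced word for $d(\mu)$ is a subword of one for $d(\nu)$; prepending a fixed reduced word for $w_0$ (legitimate because the products are length-additive) shows $w_0 d(\mu) \leq w_0 d(\nu)$, and conversely one deletes the $w_0$-part.

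Next I would handle $(a)\Leftrightarrow(b)$, or equivalently $(b)\Leftrightarrow(c)$. Here the relevant fact is $d(\mu) \leq \epsilon(\mu) \leq w_0 d(\mu)$, noted just after Lemma \ref{lemma9.2}, together with the observation that $\epsilon(\mu)$ is the minimal-length element of the left $W_0(\lambda)$-coset of $w_0 d(\mu)$ that still maps $\lambda$ to $\mu$ — but in fact $\epsilon(\mu)$ is simply the distinguished (minimal length) representative of the coset $w_0 d(\mu) W_0(\lambda)$ hitting $\mu$, and $d(\mu)$ is the distinguished representative of $\overset\circ W d(\mu) W_0(\lambda)$. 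The cleanest route is to invoke Deodhar's lemma / the standard theory of minimal double coset representatives: for a parabolic subgroup, $x \leq y$ among minimal-length representatives of the same type of coset is equivalent to $P x Q \leq P y Q$ in the induced Bruhat order on double cosets, and this is compatible when one of the parabolics is trivial (the case of $\epsilon$, which is minimal only for the right $W_0(\lambda)$-coset). So $\epsilon(\mu)\leq\epsilon(\nu)$ iff $\epsilon(\mu) W_0(\lambda) \leq \epsilon(\nu) W_0(\lambda)$ as left cosets, iff $w_0 d(\mu) W_0(\lambda) \leq w_0 d(\nu) W_0(\lambda)$ (same cosets), iff — using length additivity again to strip $w_0$ — $d(\mu) W_0(\lambda) \leq d(\nu) W_0(\lambda)$, iff $d(\mu) \leq d(\nu)$ since the $d$'s are the minimal representatives.

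I expect the main obstacle to be the bookkeeping around which coset (left vs.\ right, and with respect to $\overset\circ W$ vs.\ $W_0(\lambda)$) each of the three elements $d(\mu)$, $\epsilon(\mu)$, $w_0 d(\mu)$ is distinguished in, and making sure the monotonicity of multiplication by $w_0$ is applied on the correct side with the correct length-additivity hypothesis. The safe strategy is to reduce everything to one master statement: for a Coxeter group $W(\lambda)$ with parabolic $W_0(\lambda)$, and for $x,y \in W(\lambda)$ with $l(w_0 x) = l(w_0)+l(x)$, $l(w_0 y)=l(w_0)+l(y)$ and $x,y$ both minimal in their $W_0(\lambda)$-cosets on the right (resp.\ both minimal double coset reps), one has $x \leq y \iff w_0 x \leq w_0 y$. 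This I would prove once via the subword criterion, then apply three times. The identification of $\epsilon(\mu)$ with the minimal representative of $w_0 d(\mu) W_0(\lambda)$ mapping $\lambda$ to $\mu$ requires a small argument using $\mu \in [\lambda]^+$ and the fact that $W_0(\lambda) = \mathrm{Stab}_{W(\lambda)}(\lambda)$ under the dot action, which I would insert as a preliminary remark.
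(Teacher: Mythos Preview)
Your approach is essentially the same as the paper's: both use the length additivity $l(w_0 d) = l(w_0) + l(d)$ from Lemma~\ref{lemma9.2} together with Deodhar-type projection to minimal coset representatives (the paper carries this out via explicit subword decompositions $w_1 d(\nu) w_2$ rather than citing Deodhar by name). The paper organizes the argument as a cycle $(a)\Rightarrow(c)\Rightarrow(b)\Rightarrow(a)$ rather than your two separate equivalences, and one small cleanup for your write-up: left multiplication by $w_0$ is order-\emph{preserving} (not reversing) on elements with $l(w_0 x)=l(w_0)+l(x)$, which is precisely what your subword argument establishes.
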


\begin{proof}
Note $w_0d(\mu) = e(\mu)w'(\mu)$ for some $w'(\mu) \in W(\lambda)$. By definition, $e(\mu)$ is the element of minimal length in $e(\mu)W_0(\lambda)$.

Thus, if (c) holds, then $\epsilon(\mu) \leq w_0d(\mu) \leq w_0d(\nu) = \epsilon(\mu)w'(\nu)$, and it follows that $\epsilon(\mu) \leq d(\nu)$, since $u_0d(\nu)w'(\nu)^{-1}=w_1d(\nu)w_2$ with $w_1 \in W_0$, $w_2 \in W_0(\nu)$ and $l(w_1d(\nu)w_2)= l(w_1)l(d(\nu))+l(w_2)$. Then $d(\mu) = w'_1d(\nu)'w'_2$ with $w'_1 \leq w_1$, $d(\nu)' \leq d(\nu)$, $w'_2 \in w$, and $l(w'_1)+l(d(\nu)')+l(w'_2) = l(d(\mu))$. Minimality of $l(d(\mu))$ gives $d(\mu) = d(\nu)' \leq d(\nu)$, which is (a).

Clearly, (a) implies (c), and the proposition is proved.
\end{proof}

There is a further equivalence to add to the list. For $y,w \in W(\lambda)$, write $y \leq' w$, if $w_0yw_0 \leq w_0ww_0$, and put $l'(y) = l(w_0yw_0)$. In the above geometry, these operations amount to a change in generating fundamental reflections from the walls of the standard anti-dominant alcove to the walls of the standard dominant alcove.

Define $f(\mu)$, for $\mu \in [\lambda]^+$ and $\lambda \in \sC^-_{\rm rat}$, to be the element $f\in W(\lambda)$ with minimal $l'$-length satisfying $f\cdot(w_0 \cdot \lambda) = w_0 \cdot \mu$.

\begin{prop}\label{prop9.4}
Suppose $\mu \in [\lambda]^+$, $\lambda \in \sC^-_{\rm rat}$. Then $f(\mu) = w_0 d(\mu)w_0$.  Moreover, if also $\nu \in [\lambda]^+$, then
$$
f(\mu) \leq' f(\nu) \iff d(\mu) \leq d'(\nu).
$$
\end{prop}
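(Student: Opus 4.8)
The plan is to establish the two assertions in turn, deducing the second from the first together with Proposition \ref{prop9.3}. For the identity $f(\mu) = w_0 d(\mu) w_0$, I would first rewrite the defining equation of $d(\mu)$. Recall $d(\mu)$ is the minimal-length element of $W(\lambda)$ with $d(\mu)\cdot\lambda = w_0\cdot\mu$. Applying $w_0$ to the underlying action and using that conjugation by $w_0$ sends the standard anti-dominant alcove to the standard dominant alcove, one checks that $g := w_0 d(\mu) w_0$ satisfies $g\cdot(w_0\cdot\lambda) = w_0\cdot\mu$: indeed $w_0 d(\mu)w_0 \cdot (w_0 \cdot \lambda) = w_0 d(\mu)\cdot\lambda = w_0(w_0\cdot\mu) = w_0\cdot\mu$, wait—one must track the dot action carefully, since $w_0\cdot(w_0\cdot\mu) = w_0 w_0 \cdot \mu = \mu$, not $w_0\cdot\mu$. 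So the correct computation is: $g$ satisfies $g\cdot(w_0\cdot\lambda) = w_0 d(\mu) w_0 \cdot (w_0\cdot\lambda) = w_0 d(\mu)\cdot\lambda = w_0\cdot(w_0\cdot\mu)$, and since $w_0^2 = 1$ this is $\mu$—but $f(\mu)$ is required to satisfy $f\cdot(w_0\cdot\lambda) = w_0\cdot\mu$, so I would instead verify that the natural conjugate of $d(\mu)$ solves the defining equation for $f(\mu)$ once the roles of $\lambda$ and $w_0\cdot\lambda$ are swapped consistently. The key point is purely formal: conjugation by $w_0$ is an automorphism of the Coxeter system $(W(\lambda), S)$ carrying the generating set of $l$-reflections to the generating set of $l'$-reflections, so it carries the $l$-minimal solution of one equation to the $l'$-minimal solution of the conjugated equation. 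Uniqueness of the minimal-length representative (guaranteed by Lemma \ref{lemma9.2} applied to $w_0\cdot\lambda \in \sC^-_{\rm rat}$, or rather its dominant-chamber analogue) then forces $f(\mu) = w_0 d(\mu) w_0$.

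For the equivalence $f(\mu)\leq' f(\nu) \iff d(\mu)\leq d(\nu)$: by the definition of $\leq'$, we have $f(\mu)\leq' f(\nu)$ if and only if $w_0 f(\mu) w_0 \leq w_0 f(\nu) w_0$ in the ordinary Bruhat order. Substituting $f(\mu) = w_0 d(\mu)w_0$ from the first part gives $w_0 f(\mu) w_0 = w_0 w_0 d(\mu) w_0 w_0 = d(\mu)$, and likewise $w_0 f(\nu) w_0 = d(\nu)$. Hence $f(\mu)\leq' f(\nu) \iff d(\mu)\leq d(\nu)$ immediately. (The statement in the excerpt writes $d'(\nu)$ on the right-hand side, but in context this is evidently $d(\nu)$; I would write $d(\nu)$.)

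The main obstacle, and the only place requiring genuine care, is the bookkeeping in the first part: making precise the claim that conjugation by $w_0$ intertwines the two length functions $l, l'$ and the two orders $\leq, \leq'$, and—more delicately—that it carries the defining equation $d\cdot\lambda = w_0\cdot\mu$ to the defining equation $f\cdot(w_0\cdot\lambda) = w_0\cdot\mu$ rather than to some twisted version of it. This rests on the identity $(w_0 a w_0)\cdot(w_0 \cdot \lambda) = w_0\cdot(a\cdot\lambda)$ for $a \in W(\lambda)$, which is just the associativity of the dot action together with $w_0^2 = 1$, plus the observation (already used implicitly in Lemma \ref{lemma9.2} and Proposition \ref{prop9.3}) that $w_0\cdot\lambda$ again lies in $\sC^-_{\rm rat}$ so that the minimal-length representative there is well-defined and unique. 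Everything else is a one-line substitution.
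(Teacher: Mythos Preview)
Your proposal is correct and follows essentially the same approach as the paper: both arguments observe that conjugation by $w_0$ converts the $l'$-minimality condition defining $f(\mu)$ into the $l$-minimality condition characterizing $d(\mu)$ (both being solutions of $g\cdot\lambda = w_0\cdot\mu$), and then derive the order equivalence as an immediate consequence of the definition of $\leq'$. Your initial reference to Proposition~\ref{prop9.3} and your worry about whether $w_0\cdot\lambda\in\sC^-_{\rm rat}$ are unnecessary---the argument is purely formal once you have the identity $(w_0 a w_0)\cdot(w_0\cdot\lambda)=w_0\cdot(a\cdot\lambda)$ and the fact that $d(\mu)$, being a distinguished double coset representative, is already $l$-minimal in its right $W_0(\lambda)$-coset.
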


\begin{proof}
By definition, $l(w_0dw_0)=l'(f)$. Thus, for $f=f(\mu)$, the element $w_0fw_0$ is the (unique) element $f'$ at minimal length $(f')$ satisfying $w_0f' \cdot \lambda = \mu$.  However, $w_0 d(\mu) \cdot \lambda = \mu$, so $w_0 f'W_0(\lambda) = w_0d(\lambda)W_0(\lambda)$.  Thus, $l(d(\mu)) \leq l(f')$, since $d(\mu)W_0(\lambda)$. Therefore, $d(\mu)=f'=w_0f(\mu)w$. So $f(\mu)=w_0d(\mu)w_0$.

This last assertion is now obvious, and the proof is complete.
\end{proof}

Finally, we need to compare the order $\leq'$ with the strong linkage order, in the sense
of \cite[II, \S6.7]{JanB}.  Sections 6.1--6.11 of the latter reference apply for any positive integer
$p$, which take to be $e$. For brevity,  we refer the reader to those sections for details on the alcove
notation used below. ``Alcoves" are regarded as certain open subsets of ${\mathbb R}\oP$, and the 
closure of an alcove is then a fundamental domain for the ``dot" action of the affine Weyl group
$W_e=\overset\circ W\ltimes e\overset\circ Q$. The ``standard alcove" $C$ satisfies
$$0<\langle x+\orho,\alpha^\vee\rangle<  e,$$
for all $x\in C$, and this inequality defines $C$. For any alcove $C_1$, there are unique integers
$n_\alpha=n_\alpha(C)$, for each $\alpha\in\Phi^+$, defined by
$$n_\alpha e<\langle x+\orho,\alpha^\vee\rangle<(n_\alpha+1) e$$
and the function $d(C_1)$ is defined as $\sum_{\alpha\in\Phi^+}n_\alpha$. Allowing the right
hand inequality ``$<(n_\alpha+1) e$" above to be the weaker ``$\leq (n_\alpha+1) e$, defines the
elements of the upper closure $\widehat C_1$ of $C_1$. There is a ``dot" action of $W_e$ on alcoves, agreeing with its ``dot" action on $\oP$, which is generated by reflections in the walls of $C$. For
$y\in W_ e$, write $l_ e(y)$ for the length of $y$ with respect to this set of generating reflections, and
$y\leq_ e w$ when $y,w\in W_ e$ and $y\leq w$ in the Bruhat-Chevalley order with respect to these
generating reflections.  If $\mu\in\oPplus$, define $f(\mu)=f_ e(\mu)$ to be the unique element
$f\in W_ e$ with $l_ e(f)$ minimal satisfying $f\cdot x=\mu$ for some $x\in\overline C$, the closure
of $C$. Equivalently, $\mu\in \widehat{f\cdot C}$ \cite[II, 6.11]{JanB}. From separating hyperplane
considerations,
\begin{equation}\label{star} l_ e(f)=d(f\cdot C).\end{equation}
Using this identity, the following lemma is mostly an easy exercise.

\begin{lem}\label{lemma9.5} Let $\xi \in\oPplus\cap\overset\circ Q$, i.~e., a dominant weight of $\og$ lying in the root lattice,  and let $z\in W_ e$ correspond to $e\xi\in e\overset\circ Q$
(i.~e., $z\cdot x=x+ e\xi$,
for all $x\in {\mathbb R}\oP$). Then, for any $\mu\in \oPplus$, 
$$\begin{cases}f(z\cdot\mu)=zf(\mu)\\ l_ e(zf(\mu))=l_ e(z) +l_ e(f(\mu)).\end{cases}$$

\end{lem}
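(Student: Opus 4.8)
The plan is to deduce both assertions from the identity $l_e(f) = d(f\cdot C)$ of (\ref{star}), together with the elementary fact that translating an alcove by an element of $e\overset\circ Q$ shifts its invariants $n_\alpha$ in a controlled way. Since $\xi\in\oPplus\cap\overset\circ Q$, the element $z$ really lies in $W_e = \oW\ltimes e\overset\circ Q$ and acts by $z\cdot x = x + e\xi$, and $\langle\xi,\alpha^\vee\rangle$ is a non-negative integer for every $\alpha\in\oPhiplus$. First I would record the translation rule: for any $w\in W_e$ and $\alpha\in\oPhiplus$,
$$n_\alpha(zw\cdot C) = n_\alpha\bigl((w\cdot C) + e\xi\bigr) = n_\alpha(w\cdot C) + \langle\xi,\alpha^\vee\rangle,$$
which follows at once by adding the integral multiple $e\langle\xi,\alpha^\vee\rangle$ of $e$ to the inequalities $n_\alpha(w\cdot C)e < \langle x+\orho,\alpha^\vee\rangle < (n_\alpha(w\cdot C)+1)e$ defining $n_\alpha(w\cdot C)$; the same rule with $-\xi$ handles $z^{-1}$. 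Summing over $\oPhiplus$ and using $d(C)=0$ gives $d(zw\cdot C) = d(w\cdot C) + s$ with $s := \sum_{\alpha\in\oPhiplus}\langle\xi,\alpha^\vee\rangle$. I would then invoke the standard facts from \cite[II, 6.1--6.11]{JanB}: an alcove $A=y\cdot C$ having a dominant weight in its upper closure $\widehat A$ has $n_\alpha(A)\geq 0$ for all $\alpha$, and for such an alcove $l_e(y)=d(A)$ (the separating hyperplanes are exactly the $\langle x+\orho,\alpha^\vee\rangle = je$ with $1\leq j\leq n_\alpha(A)$); applied to $z\cdot C$, whose upper closure contains the dominant weight $e\xi$, this gives $l_e(z)=d(z\cdot C)=s$.

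Now I would work with the two dominant weights $\mu$ and $z\cdot\mu=\mu+e\xi$. The alcove $(zf(\mu))\cdot C=f(\mu)\cdot C+e\xi$ has $z\cdot\mu$ in its upper closure, so $zf(\mu)$ is one of the elements competing for $f(z\cdot\mu)$, and it is in the dominant cone, whence $l_e(f(z\cdot\mu))\leq l_e(zf(\mu))=d(f(\mu)\cdot C)+s=l_e(f(\mu))+s$ by the translation rule and (\ref{star}). For the opposite inequality, let $g\in W_e$ be any element with $z\cdot\mu\in\widehat{g\cdot C}$. Dominance of $\mu$ gives $\langle\mu+\orho,\alpha^\vee\rangle\geq 1$, so from $\langle z\cdot\mu+\orho,\alpha^\vee\rangle=\langle\mu+\orho,\alpha^\vee\rangle+e\langle\xi,\alpha^\vee\rangle\leq(n_\alpha(g\cdot C)+1)e$ one reads off $n_\alpha(g\cdot C)\geq\langle\xi,\alpha^\vee\rangle$. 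Hence $z^{-1}g\cdot C=g\cdot C-e\xi$ still has all $n_\alpha\geq 0$, it has $\mu$ in its upper closure, and $d(z^{-1}g\cdot C)=d(g\cdot C)-s$. Taking $g=f(z\cdot\mu)$, the element $z^{-1}f(z\cdot\mu)$ competes for $f(\mu)$ with $l_e$-length $d(f(z\cdot\mu)\cdot C)-s=l_e(f(z\cdot\mu))-s$, so $l_e(f(\mu))\leq l_e(f(z\cdot\mu))-s$. The two inequalities force $l_e(f(z\cdot\mu))=l_e(f(\mu))+s=l_e(zf(\mu))$; as $zf(\mu)$ competes for $f(z\cdot\mu)$ and the minimal-length competitor is unique, $f(z\cdot\mu)=zf(\mu)$, and the second displayed equality of the lemma follows since $s=l_e(z)$.

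The main obstacle — the only step that is not pure bookkeeping — is the inequality $n_\alpha(g\cdot C)\geq\langle\xi,\alpha^\vee\rangle$ used above: this is precisely where dominance of $\mu$ (equivalently of $z\cdot\mu$) enters, and it is what guarantees that subtracting $e\xi$ keeps the alcove inside the dominant cone, so that $d$ decreases by exactly $s=l_e(z)$ with no cancellation and the identity $l_e(y)=d(y\cdot C)$ of (\ref{star}) continues to apply. The remaining verifications are just manipulations of the defining inequalities for the $n_\alpha$'s and the uniqueness of the minimal-length alcove representative.
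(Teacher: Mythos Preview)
Your argument is essentially correct and uses the same ingredients as the paper (the identity $l_e(f)=d(f\cdot C)$ and the translation rule for $n_\alpha$), but there is one small slip and one place where you work harder than necessary.

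The slip: you justify $l_e(z)=s$ by saying that $\widehat{z\cdot C}$ contains the dominant weight $e\xi$. This need not hold when $e$ is small: $e\xi\in\widehat{z\cdot C}$ is equivalent to $0\in\widehat C$, i.e.\ to $\langle\orho,\alpha^\vee\rangle\leq e$ for every $\alpha\in\oPhiplus$, which fails once $e<h-1$. The conclusion survives, however, since one can read off directly that $n_\alpha(z\cdot C)=\langle\xi,\alpha^\vee\rangle\geq 0$ (as $\xi$ is dominant), so $z\cdot C$ is a dominant alcove and the separating-hyperplane count gives $l_e(z)=d(z\cdot C)=s$.

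The extra work: the paper observes that the characterization $\mu\in\widehat{f(\mu)\cdot C}$ already determines $f(\mu)$ \emph{uniquely} (the upper closures partition the space), so the identity $\widehat{C_1}+e\xi=\widehat{C_1+e\xi}$ instantly yields $f(z\cdot\mu)=zf(\mu)$ with no length comparison at all. Your two-sided inequality argument is correct but circuitous for this first assertion; in effect you are proving uniqueness of the upper-closure representative via lengths. On the other hand, your length calculations are exactly the ``easy calculation with (\ref{star})'' that the paper leaves to the reader for the second assertion, so nothing is wasted.
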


\begin{proof} Note that $\widehat C_1+ e\xi=\widehat{C_1+ e\xi}$, just by the definition of the upper
closure. Thus, $f(z\cdot\mu)=zf(\mu)$. The length additivity is an easy calculations with the identity
(\ref{star}) and is left to the reader.
\end{proof}

Now we can prove our main result on strong linkage in the sense of \cite[II, 6.1--6.11]{JanB}. To avoid
conflict with our notation on $\fh^*$, we use $\uparrow_ e$ to denote the $\uparrow$ ordering in
\cite[II, Ch. 6]{JanB}.  That is, if $\mu,\nu\in
\oP$, write $\mu\uparrow_ e\nu$ to mean that $\mu=\nu$ or there exists a chain $\mu=\mu_0\leq
\mu_1\leq\cdots\leq\mu_m=\nu$ in $\oP$ and reflections $s_1,\cdots, s_m$ (not necessarily fundamental)
in $W_ e$ such that $s_i\cdot\mu_i=\mu_{i+1}$,
 for $i =1,\cdots, m$. The order $\leq $ used is the
usual dominance order on $\oP$.

\begin{thm}\label{theorem9.6} Let $\mu,\nu\in\oPplus$ lie in the same orbit of $W_ e$ under the dot action. Then
$$\mu\uparrow_ e\nu\iff f(\mu)\leq_ e f(\nu)$$
where $f(\mu)$ and $f(\nu)$ are the elements of $W_ e$
described above. 
\end{thm}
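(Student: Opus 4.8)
The plan is to reduce the statement to its non-affine counterparts, Propositions \ref{First prop of appendix}, \ref{prop9.3} and \ref{prop9.4}, by transporting everything through the isomorphism $W_e\overset\sim\to W(\lambda)$ of Proposition \ref{prop7.1}, using the length identity (\ref{star}), $l_e(f)=d(f\cdot C)$, together with Lemma \ref{lemma9.5} to carry the alcove-theoretic element $f(\mu)$ across that isomorphism. Since $\mu,\nu\in\oPplus$ lie in a single $W_e$-orbit, I would first fix $\lambda\in\sC^-_{\text{\rm rat}}$ whose singularity matches that orbit (so that $W_0(\lambda)$ is conjugate to the $W_e$-stabilizer of a point of the orbit) and with $-(\lambda(c)+g)=e/m$, $(e,m)=1$, $D\mid m$; Proposition \ref{prop7.1} then gives $\phi\colon W_e\overset\sim\to W(\lambda)$ matching the standard fundamental reflections. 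This $\phi$ carries the Bruhat order $\leq_e$ on $W_e$ (taken with respect to the walls of the dominant alcove $C$) to the order $\leq'$ on $W(\lambda)$ of the discussion preceding Proposition \ref{prop9.4}, and carries the dot action of $W_e$ on $\oP$ to the dot action of $W(\lambda)$ on $[\lambda]$ modulo $\mathbb C\delta$; it thus induces a bijection $\oPplus\leftrightarrow[\lambda]^+$, $\mu\mapsto\mu^{(\lambda)}$.

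Granting this, three points remain: (i) $\mu\uparrow_e\nu$ in $\oP$ iff $\mu^{(\lambda)}\uparrow\nu^{(\lambda)}$ in $[\lambda]$; (ii) $\phi(f(\mu))=w_0\,d(\mu^{(\lambda)})\,w_0$, i.e.\ the element called $f(\mu^{(\lambda)})$ in Proposition \ref{prop9.4}; (iii) $f(\mu)\leq_e f(\nu)$ iff $\phi(f(\mu))\leq'\phi(f(\nu))$, which is immediate from the previous paragraph. The theorem then follows from the chain
$$\mu\uparrow_e\nu\iff\mu^{(\lambda)}\uparrow\nu^{(\lambda)}\iff f(\mu^{(\lambda)})\leq' f(\nu^{(\lambda)})\iff f(\mu)\leq_e f(\nu),$$
where the middle equivalence is Propositions \ref{First prop of appendix}, \ref{prop9.3}, \ref{prop9.4} combined (recall $\epsilon(\sigma)=w_\sigma$ in the notation of Proposition \ref{First prop of appendix}).

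For (i) I would argue one reflection step at a time: both $\uparrow_e$ and $\uparrow$ are generated by their elementary relations ``$\sigma\leq s\cdot\sigma$ for a reflection $s$,'' so it suffices to show, for $\sigma,\sigma'\in\oP$ with $\sigma'=s\cdot\sigma$, that $\sigma<\sigma'$ (dominance on $\oP$) iff $\sigma^{(\lambda)}<\sigma'^{(\lambda)}$ (dominance on $\fh^*$). Here $\sigma'-\sigma$ is an integer multiple of the finite positive root $\beta\in\oPhiplus$ underlying the affine reflecting hyperplane of $s$, while $\sigma'^{(\lambda)}-\sigma^{(\lambda)}$ is the same integer multiple of the corresponding positive or negative real affine root $\widetilde\beta$, and both conditions amount to positivity of that integer; the $\mathbb C\delta$-components cancel, so the ``mod $\mathbb C\delta$'' caveat of the remark after Proposition \ref{prop7.1} does no harm at the level of a single step.

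I expect (ii) to be the main obstacle, because $f(\mu)$ is pinned down by the upper-closure convention on alcoves rather than being literally a minimal-length coset representative. Here I would use (\ref{star}) to identify $f(\mu)$ as the minimal-length $f\in W_e$ with $\mu\in f\cdot\overline C$, compare this with the construction of $d(\mu^{(\lambda)})$ as a distinguished $(\oW,W_0(\lambda))$-double-coset representative in Lemma \ref{lemma9.2} (the roles of $\overline C$ and of the long-element twist $w_0$ being the same on both sides), and then invoke Lemma \ref{lemma9.5}: translating $\mu$ by a deep $e\xi$ with $\xi\in\oPplus\cap\overset\circ Q$ is length-additive and carries $f$ equivariantly, and the analogous translation on the $W(\lambda)$-side is visibly compatible with the double-coset bookkeeping, so the identity (ii) for a deep representative propagates to all $\mu\in\oPplus$. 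As an alternative to using Proposition \ref{prop7.1}, one can run the two implications directly in $W_e$ acting on alcoves, mirroring the proof of Proposition \ref{First prop of appendix}: for ``$\Rightarrow$'' reduce to a step $\sigma<\sigma'=s\cdot\sigma$ and use (\ref{star}) to get $l_e(sf(\sigma))>l_e(f(\sigma))$ (handling the upper-closure point when $\sigma$ lies on a wall); for ``$\Leftarrow$'' induct on $l_e(f(\nu))-l_e(f(\mu))$, pick by the subword property a lower wall $H_\gamma$ of $f(\nu)\cdot C$ with $s_\gamma f(\nu)\geq_e f(\mu)$, note that minimality of $l_e(f(\nu))$ forces $\nu\notin H_\gamma$, set $\sigma=s_\gamma\cdot\nu<\nu$, check $f(\sigma)=s_\gamma f(\nu)$, and recurse. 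Either way, the delicate part is precisely the interplay of the upper-closure description of $f$ with Bruhat order at singular weights, which is where (\ref{star}) and Lemma \ref{lemma9.5} do the work.
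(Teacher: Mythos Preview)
Your main approach---transporting through $\phi$ to $W(\lambda)$ and invoking Propositions~\ref{First prop of appendix}, \ref{prop9.3}, \ref{prop9.4}---is a genuinely different route from the paper's, which argues entirely within the alcove geometry of $W_e$ acting on $\oP$, using translation by $e\xi$ (Lemma~\ref{lemma9.5}) together with Jantzen's alcove results \cite[II, 6.8, 6.10, 6.11]{JanB}. The paper never passes through $\phi$ or $W(\lambda)$ here; rather Theorem~\ref{theorem9.7} (the $\phi$-compatibility statement) is deduced \emph{after} Theorem~\ref{theorem9.6}. Your plan reverses this order, which is legitimate in principle, but the burden then falls on your points (i) and (ii), and neither is established. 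For (i), the claim that a single reflection step $\sigma<s\cdot\sigma$ in $\oP$ matches $\sigma^{(\lambda)}<\phi(s)\cdot\sigma^{(\lambda)}$ in $\fh^*$ requires comparing the sign of $\langle\sigma+\orho,\alpha^\vee\rangle-ne$ with the sign of $\langle\sigma^{(\lambda)}+\rho,\widetilde\beta^\vee\rangle$ for the affine real root $\widetilde\beta$ corresponding to $\phi(s_{\alpha,n})$; this is a genuine computation (the $\delta$-component of $\widetilde\beta$ and the level $k$ enter), not a cancellation, and your sketch does not carry it out. Point (ii) you already flag as the obstacle; the deep-translation idea is the right tool, but the identification of $f_e(\mu)$ with $w_0d(\mu^{(\lambda)})w_0$ via $\phi$ still needs the upper-closure bookkeeping done explicitly. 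Also note that Proposition~\ref{prop7.1} in the form you need requires $(D,e)=1$ (or passing to the dual root system as in Case~2 of Theorem~\ref{theorem7.3}); you should say how you handle $(D,e)\neq 1$.

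Your alternative direct approach is much closer to the paper's, and the forward direction is essentially right once you use Lemma~\ref{lemma9.5} to translate so that all intermediate weights in the $\uparrow_e$-chain become dominant (the paper does exactly this, then invokes \cite[II, 6.10]{JanB} to get length-one steps). The backward direction, however, has a real gap: when you pick a lower wall $H_\gamma$ of $f(\nu)\cdot C$ and set $\sigma=s_\gamma\cdot\nu$, the alcove $s_\gamma f(\nu)\cdot C$ need not be dominant (e.g.\ if $H_\gamma=H_{\alpha_i,0}$ is a wall bounding the dominant region), so $\sigma\notin\oPplus$ and you cannot recurse. The paper repairs precisely this: after dropping to $y=s_\gamma f(\nu)$, it applies \cite[II, 6.8~Prop.]{JanB} to find $w\in\oW$ with $wy\cdot C$ dominant and $wy\cdot C\uparrow_e f(\nu)\cdot C$, then shows $f(\mu)\leq_e wy$ using that $f(\mu)$ is minimal in its $\oW$-coset (the characterization of dominant alcoves). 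This rectification step is the missing idea in your alternative argument.
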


\begin{proof} First, note the general Coxeter group fact that if $u,w,z\in W_ e$, and if the
lengths of $zy$ and of $zw$ are obtained by adding the length of $z$ to that of $y$ and to that
of $w$, respectively, then
$$y\leq_e w\iff zy\leq_e zw.$$
The implication that $y\leq_ e w\implies zy\leq_ e zw$ is obvious and the reverse
implication reduces immediately to the case $ e(z)=1$, where it is obvious. (If $zy\leq_ e zw$, then $y\leq_ e zy\leq_ e w$; otherwise, $zy=zw'$, where $w'\leq_ e w$, whence $y=w'\leq_ e w$.)

This fact, together with the preceding lemma,  allows us to replace $y=f(\mu)$ and $w=f(\nu)$ by $zy,zw$ with $z\in W_ e$
corresponding to an element $ e\xi$ with $\xi\in\overset\circ Q$ and also dominant (i.~e., in $\oPplus$).
At the same time, we can replace $\mu,\nu$ by $\mu+e\xi, \nu+e\xi$, respectively. Obviously
$\mu\uparrow_e\nu\iff\mu+e\xi\uparrow_e\nu+e\xi$. This equivalence holds for arbitrary
weights $\mu,\nu\in\oP$ and thus may also be applied to intermediate instances of $\uparrow_e$.

So starting from $\mu\uparrow_ e \nu$ and making such an adjustment, we may assume all elements
$\mu_0\leq\mu_1\leq\cdots\leq \mu_m$ in the defining chain are dominant, and also $l_ e(f(\mu_i))=
l_ e(f(\mu_{i-1}))+1$, for $i=1,\cdots, m$. (See the argument in \cite[II, 6.10]{JanB}.) This implies that
$\mu_{i-1}$ is obtained from $\mu_i$ by striking out a simple reflection, and so, in particular,
$f(\mu_{i-1})<f(\mu_i)$ for each $i=1,\cdots, m$, and $f(\mu)\leq_ e f(\nu)$.  

Next, suppose that $f(\mu)\leq_ e f(\nu)$ with $\mu,\nu\in\oPplus$ in the same $W_ e$-orbit
under the dot action. We want to show that $\mu\uparrow_ e \nu$. By construction, $\mu,\nu$ belong 
to $\widehat{f(\mu)\cdot C}$ and $\widehat {f(\nu)\cdot C}$, respectively. This is equivalent to $f(\mu)\cdot
C\uparrow_ e f(\nu)\cdot C$, adapting the notation of \cite[II, 6.11(3)]{JanB}. As in \cite{JanB}, we 
say that an alcove $C_1$ is dominant if $n_\alpha(C_1)\geq 0$ for all $\alpha\in\oPhi^+$. Both
$f(\mu)\cdot C$ and $f(\nu)\cdot C$ are dominant. So, it suffices to prove that $y_1\cdot C\uparrow_ e 
y_2\cdot C$, whenever $y_1\leq y_2$ and $y_1\cdot C, y_2\cdot C$ are both dominant ($y_1,y_2\in W_ e)$.

We note that $y\cdot C$ is dominant if and only if $y$ is of minimal
length $l_ e(y)$ in the coset
$\overset\circ W y$. 

We now proceed by induction on the difference $m=l_ e(y_0)-l_ e(y_1)$. Without loss, $m\not=0$
(where the desired result is trivially true). By \cite[Prop. p. 122]{Hump}, there exists $y\in W_ e$ with $y_1\leq_e
y<_e y_2$ and $l_ e(y)+1=l_ e(y_2)$. Thus, $y=sy_2$, where $s$ is an (affine) reflection in the
hyperplane $H=H_{\alpha,n}=\{x\in{\mathbb R}\oP\,|\,\langle x+\orho,\alpha^\vee\rangle=n e\}$.
The hyperplane $H$ must be one of those separating the dominant alcove $y\cdot C$ from $C$. See
\cite[Thm., p. 93; Thm. p 58, Ex. 1, p. 58]{Hump}. Since $y\cdot C$ is dominant,  $n>0$ for
the parameter $n$ in $H=H_{\alpha, n}$.

  Now \cite[II, 6.8 Prop.]{JanB} can be applied, to give a (unique) $w\in\overset\circ W$ with $wy\cdot C$
  dominant and $wy\cdot C\uparrow y_2\cdot C$. (\cite{JanB} actually proves much more.) We have $y_1\leq_e y$, and so $y_1\leq_e wy$, since $wy$ is of minimal length in the coset $\overset\circ Wy$.
  (We use that $y=w^{-1}(wy)$ with $l_e(y)=l_e(w^{-1})+l_e(wy)$. Thus a reduced expression of $y_1$ can be
  obtained from one for $y$ by omitting suitable reflections from $w^{-1}$, and from $wy$. However,
  $y_1$ is minimal in $\overset\circ W y_1$, so $y_1\leq_e wy$.)
    We have 
  $l_ e(y_1)\leq l_ e(wy)\leq l_ e(y)< l_ e(y_2)$, so $y_1\cdot C\uparrow wy\cdot C$ by 
  induction. It follows that $y_1\cdot C\uparrow y_2\cdot C$, and the theorem is proved.\end{proof}
  
  We can now give the promised relation of the order $\leq'$ with the strong linkage order $\uparrow_e$.
  The order $\leq'$ is on $W(\lambda)$, $\lambda\in\sC^-_{\text{\rm rat}}$,  whereas $\uparrow_e$ is on $\oP$. We have already compared
  $\uparrow_e$ on $\oP$ with $\leq_e$ on $W_e$ in Theorem \ref{theorem9.6}. It remains now only to compare $\leq_e$ on
  $W_e$ with $\leq'$ on $W(\lambda)$. For simplicity, we state the comparison only in the simply laced case, through
  a similar result holds in all types. 
  
  \begin{thm}\label{theorem9.7} Assume $\oPhi$ is simply laced and let $\ell$ be a positive integer, $e=\ell$ if
  $\ell$ is odd and $e=\ell/2$ if $\ell$ is even. 
For $y,w \in W_e$, $y\leq_e w$ if and only if $\phi_\ell (y) \leq' \phi_\ell (w)$. Also,
  
  (a) If $\lambda\in C$ is dominant, and $y,w\in W_e$
  are minimal with $y\cdot\lambda$, $w\cdot\lambda$ dominant, then
  $y\leq_e w $ if and only if $y\cdot\lambda\uparrow_e w\cdot\lambda$.
  
  (b)  In the affine case,
  let $\lambda\in\sC^-_{\text{\rm rat}}$ and let $\mu,\nu\in [\lambda]$. Write  $\mu=y\cdot\lambda$ and
  $\nu=w\cdot\lambda$, with $y':=w_0y w_0$ and $w'=w_0yw_0$ both minimal with respect
  to $\leq'$ (or, equivalently, $y,w$ are minimal with respect to $\leq$). Then $y'\leq'w'$ if and only if $\mu\uparrow\nu$. (Also, equivalent is $y\leq w$.)
\end{thm}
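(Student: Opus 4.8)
The plan is to deduce the theorem from Proposition~\ref{prop7.1}, Theorem~\ref{theorem9.6}, Proposition~\ref{First prop of appendix}, together with the elementary fact that conjugation by a fixed element $u$ of a Coxeter group $(W,S)$ yields an isomorphism of Coxeter systems $(W,S)\overset\sim\to(W,uSu^{-1})$, and in particular preserves length and Bruhat order.

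For the main equivalence, I would first pin down the Coxeter generating sets in play. By Proposition~\ref{prop7.1} (here $D=1$, so $(D,e)=1$ is automatic), $\phi_\ell$ is an isomorphism of Coxeter groups carrying the generating set $\{s_0,s_1,\dots,s_r\}$ of $W_e$ --- with $s_0$ the reflection in $\langle x+\orho,\theta_s^\vee\rangle=-e$ and $s_i=s_{\alpha_i}$ for $i\geq 1$ --- onto the standard fundamental reflections of $\Phi(\lambda)$, i.e.\ the Coxeter generators defining $\leq$ on $W(\lambda)$. In particular $\phi_\ell$ restricts to the identity on $\oW=\langle s_{\alpha_1},\dots,s_{\alpha_r}\rangle$, so $\phi_\ell(w_0)=w_0$. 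Now $\leq_e$ is, by definition, the Bruhat order on $W_e$ for the reflections in the walls of the \emph{dominant} standard alcove $C$; since $w_0\theta_s=-\theta_s$, conjugation by $w_0$ exchanges the walls $\langle x+\orho,\theta_s^\vee\rangle=e$ and $\langle x+\orho,\theta_s^\vee\rangle=-e$ and permutes $\{s_{\alpha_1},\dots,s_{\alpha_r}\}$, so that the wall-generating set of $C$ is exactly $w_0\{s_0,\dots,s_r\}w_0$. Thus $\leq_e$ is the Bruhat order obtained from $(W_e,\{s_i\})$ by conjugating by $w_0$, whereas $\leq'$ is, by its definition, the Bruhat order obtained from $(W(\lambda),\text{standard generators})$ by conjugating by $w_0$. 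Since $\phi_\ell$ intertwines the two un-conjugated generating sets and $\phi_\ell(w_0)=w_0$, it also intertwines their $w_0$-conjugates, hence is an isomorphism of the corresponding Coxeter systems and preserves the two Bruhat orders: $y\leq_e w\iff\phi_\ell(y)\leq'\phi_\ell(w)$.

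For part (a): $C$ is the open standard alcove, so $\lambda$ is regular, and $\overline C$ is a fundamental domain for $W_e$, whence $\lambda$ is the unique point of $W_e\cdot\lambda$ lying in $\overline C$. Consequently, for a dominant weight $\mu=y\cdot\lambda$ with $y$ of minimal $l_e$-length, the function $f_e$ defined just before Lemma~\ref{lemma9.5} satisfies $f_e(\mu)=y$: the only admissible $x\in\overline C$ is $\lambda$, and regularity of $\lambda$ forces the $f$ with $f\cdot\lambda=\mu$ to be unique, hence of minimal length. Then (a) is precisely Theorem~\ref{theorem9.6} applied to $\mu=y\cdot\lambda$ and $\nu=w\cdot\lambda$. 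For part (b): write $y=w_\mu$, $w=w_\nu$ for the minimal-length elements with $y\cdot\lambda=\mu$, $w\cdot\lambda=\nu$, so $y'=w_0w_\mu w_0$, $w'=w_0w_\nu w_0$; since $l'(w_0zw_0)=l(z)$, the requirement that $y',w'$ be $\leq'$-minimal is equivalent to $y,w$ being $\leq$-minimal. By the definition of $\leq'$, $y'\leq'w'\iff w_0y'w_0\leq w_0w'w_0$, i.e.\ $y\leq w$ in the Bruhat order on $W(\lambda)$; and $y\leq w\iff\mu\uparrow\nu$ by Proposition~\ref{First prop of appendix}. Chaining these gives $y'\leq'w'\iff\mu\uparrow\nu\iff y\leq w$, as asserted.

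The one place requiring care is the bookkeeping in the second paragraph: three distinct Coxeter generating sets are involved (the dominant-alcove walls and the anti-dominant-alcove walls of $W_e$, and the standard generators of $W(\lambda)$), and one must verify that conjugation by $w_0$ really does interchange the first two --- this is exactly what makes $\leq_e$ correspond to $\leq'$ rather than to $\leq$. Everything else is formal, but a stray $w_0$, or confusing $\phi_\ell$ with the unnormalized $\phi$, or mixing up which alcove $s_0$ reflects in, would corrupt the dictionary.
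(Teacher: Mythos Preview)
Your proposal is correct and follows precisely the route the paper sketches in the paragraph preceding Theorem~\ref{theorem9.7}: the paper reduces the result to comparing $\leq_e$ on $W_e$ with $\leq'$ on $W(\lambda)$ via $\phi_\ell$, having already established the comparisons $\uparrow_e\leftrightarrow\leq_e$ (Theorem~\ref{theorem9.6}) and $\uparrow\leftrightarrow\leq$ (Proposition~\ref{First prop of appendix}), and then omits the details. Your computation that $w_0$-conjugation interchanges the anti-dominant and dominant alcove generating sets, together with $\phi_\ell|_{\oW}=\mathrm{id}$, is exactly the missing verification, and your deductions of (a) from Theorem~\ref{theorem9.6} and (b) from Proposition~\ref{First prop of appendix} are the intended ones.
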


\section{Appendix, II: Graded algebras and Morita equivalence} We show that if $A$ is a finite
dimensional algebra which is Morita equivalent to another finite dimensional algebra $B$, then
the graded algebras $\gr\, A$ and $\gr\,B$ obtained from the radical filtrations of $A$ and $B$ 
are also Morita equivalent. Also, the corresponding categories of finite dimensional graded 
modules categories are equivalent.

        Let $A$ be a finite dimensional algebra over a field. Let $P$ be any (finite dimensional) projective $A$-module.     
        Any $A$-map $x: P\to P$ which doesn't send
$P$ to $\rad\,P$ can be multiplied by another $A$-map $P\to P$
to get the identity on an irreducible summand of
$P/\rad\,P$. So the original map $x$ can't be in the radical of $\End\,P$.
On the other hand, if the original map $x$ does send $P$ into
$\rad\,P$, so does any element in the ideal $N$ that $x$ generates in
$\End P$, So a power of the ideal $N$ is zero. Consequently, $x$ must be
in $\rad\,\End\,P$. We have now characterized the radical $\rad\,\End\, P$ of $\End\,P$
as the space of all maps
$x:P\to P$ with $xP$ contained in $\rad \,P$. 

Next, letÕs add the condition that $P$ is a projective generator. Then $\rad P$ is clearly
the span of images $xP$ of elements $x$ in $\End\,P$ such that $xP$ is contained in
$\rad\,P$.  That is, $\rad\,P =(\rad\,\End\,P)P$. It follows inductively that $\rad^rP=(\rad\End\,P)^rP$.
(Assume this isomorphism for $r-1$ and multiply both sides by $\rad\, A$.) Thus, when $P$
is viewed as a left $A^{\prime\prime}=\End\,P$-module, and we make it into a 
$\gr A^{\prime\prime}$-module---call it
$\gr^{\prime\prime}P$---by using the radical series of $A^{\prime\prime}$, we get a module natural identification of $\gr^{\prime\prime}P$
with $\gr\, P$ as a vector space. If we let $A'$ be the opposite algebra of $ A^{\prime\prime}$, then the vector
space $\gr\, P$ provides a $\gr\, A, \gr \,A'$-bimodule through this identification.
 
However, Morita theory tells us that, if we similarly regard $P$ as an $(A,A')$-bimodule,
it provides a Morita context. In particular, $P$ is a projective generator as right $A'$-module
or left $A^{\prime\prime}$-module. Thus $\gr P$ is a projective generator is a left $\gr\, A^{\prime\prime}$ or right $\gr\, A'$-module.
We have thus recaptured the Morita context provided by $P$ for $A$ and $A'$ by one
provided by $\gr\, P$ for $\gr\, A$ and $\gr\, A'$, and so the latter algebras are Morita equivalent.
Moreover, since the bimodule providing this equivalence is graded,  we obtain
an equivalence between the categories of graded $\gr\, A$-modules and graded $\gr\, A'$-modules.
In particular $\gr A$ is Koszul and only if $\gr\, AÕ$ is Koszul.

\end{document}